   \def\MR#1{}
\newtheorem{theorem}{Theorem}[section]
\newtheorem{proposition}[theorem]{Proposition}
\newtheorem{lemma}[theorem]{Lemma}
\newtheorem{corollary}[theorem]{Corollary}
\newtheorem{alphatheorem}{Theorem}
\theoremstyle{definition}
\newtheorem{definition}[theorem]{Definition}
\newtheorem{fact}[theorem]{Fact}
\newtheorem{conjecture}[theorem]{Conjecture}
\newtheorem{remark}[theorem]{Remark}
\newtheorem{example}[theorem]{Example}
\def\M{\mathbb M}
\def\N{\mathbb N}
\def\Q{\mathbb Q}
\def\R{\mathbb R}
\def\U{\mathbb U}
\newcommand{\cG}{\mathcal{G}}
\newcommand{\cI}{\mathcal{I}}
\newcommand{\cJ}{\mathcal{J}}
\newcommand{\cK}{\mathcal{K}}
\newcommand{\cL}{\mathcal{L}}
\newcommand{\caL}{\mathcal{L}}
\newcommand{\cM}{\mathcal{M}}
\newcommand{\cN}{\mathcal{N}}
\newcommand{\cQ}{\mathcal{Q}}
\newcommand{\cR}{\mathcal{R}}
\newcommand{\cS}{\mathcal{S}}
\newcommand{\cU}{\mathcal{U}}
\newcommand{\p}{\oplus}
\newcommand{\m}{\ominus}
\newcommand{\Fraisse}{Fra\"{i}ss\'{e}}
\newcommand{\LDS}{\cL_{\textnormal{om}}}
\newcommand{\TR}[1]{\Th(\cU_{#1})}
\newcommand{\UR}[1]{\cU_{#1}}
\newcommand{\MU}[1]{\U_{#1}}
\newcommand{\oms}[1]{\vphi_{\textnormal{#1}}}
\newcommand{\RUS}{\mathbf{RUS}}
\newcommand{\vphi}{\varphi}
\newcommand{\func}{\longrightarrow}
\newcommand{\seq}{\subseteq}
\newcommand{\mand}{\makebox[.4in]{and}}
\newcommand{\miff}{\makebox[.4in]{$\Leftrightarrow$}}
\newcommand{\uth}{^{\textrm{th}}}
\newcommand{\abar}{\bar{a}}
\newcommand{\bbar}{\bar{b}}
\newcommand{\cbar}{\bar{c}}
\newcommand{\xbar}{\bar{x}}
\newcommand{\ybar}{\bar{y}}
\newcommand{\zbar}{\bar{z}}
\newcommand{\albar}{\bar{\alpha}}
\def\Th{\operatorname{Th}}
\def\SO{\operatorname{SO}}
\def\SOP{\operatorname{SOP}}
\def\TP{\operatorname{TP}}
\def\IP{\operatorname{IP}}
\def\NSOP{\operatorname{NSOP}}
\def\FSOP{\operatorname{FSOP}}
\def\NFSOP{\operatorname{NFSOP}}
\def\NTP{\operatorname{NTP}}
\def\NIP{\operatorname{NIP}}
\def\arch{\operatorname{arch}}
\def\dcl{\operatorname{dcl}}
\def\acl{\operatorname{acl}}
\def\Aut{\operatorname{Aut}}
\def\Sym{\operatorname{Sym}}
\def\tp{\operatorname{tp}}
\def\NP{\operatorname{NP}}
\def\dist{\operatorname{dist}}
\def\dom{\operatorname{dom}}
\def\eacl{\operatorname{acl}^{\operatorname{eq}}}
\def\edcl{\operatorname{dcl}^{\operatorname{eq}}}
\def\eq{\operatorname{eq}}
\def\heq{\operatorname{heq}}
\def\Ind{\setbox0=\hbox{$x$}\kern\wd0\hbox to 0pt{\hss$\mid$\hss}
\lower.9\ht0\hbox to 0pt{\hss$\smile$\hss}\kern\wd0}
\def\Notind{\setbox0=\hbox{$x$}\kern\wd0\hbox to 0pt{\mathchardef
\nn=12854\hss$\nn$\kern1.4\wd0\hss}\hbox to
0pt{\hss$\mid$\hss}\lower.9\ht0 \hbox to 0pt{\hss$\smile$\hss}\kern\wd0}
\def\ind{\mathop{\mathpalette\Ind{}}}
\def\nind{\mathop{\mathpalette\Notind{}}}
\newcommand{\dotminus}{ 
\!\!\buildrel\textstyle~.\over{\hbox{ 
\vrule height3pt depth0pt width0pt}{\smash-} 
}}
\newcommand{\odotminus}{ 
\!\!\buildrel\textstyle~.\over{\hbox{ 
\vrule height1.5pt depth0pt width0pt}{\smash\ominus} 
}}
\begin{document}

\title{Neostability in countable homogeneous metric spaces}

\author{
Gabriel Conant\\
University of Notre Dame\\
gconant@nd.edu
}

\date{September 6, 2016}

\maketitle

\begin{abstract}
Given a countable, totally ordered commutative monoid $\cR=(R,\p,\leq,0)$, with least element $0$, there is a countable, universal and ultrahomogeneous metric space $\cU_\cR$ with distances in $\cR$. We refer to this space as the \textit{$\cR$-Urysohn space}, and consider the theory of $\cU_\cR$ in a binary relational language of distance inequalities. This setting encompasses many classical structures of varying model theoretic complexity, including the rational Urysohn space, the free $n\uth$ roots of the complete graph (e.g. the random graph when $n=2$), and theories of refining equivalence relations (viewed as ultrametric spaces). We characterize model theoretic properties of $\Th(\cU_\cR)$ by algebraic properties of $\cR$, many of which are first-order in the language of ordered monoids. This includes stability, simplicity, and Shelah's $\SOP_n$-hierarchy.  Using the submonoid of idempotents in $\cR$, we also characterize superstability, supersimplicity, and weak elimination of imaginaries. Finally, we give necessary conditions for elimination of hyperimaginaries, which further develops previous work of Casanovas and Wagner.
\end{abstract}

\section{Introduction}

\numberwithin{figure}{section}

In this paper, we consider model theoretic properties of generalized metric spaces obtained as analogs of the rational Urysohn space. Our results will show that this class of metric spaces exhibits a rich spectrum of complexity in the classification of first-order theories without the strict order property. 

The object of focus is the countable \textit{$\cR$-Urysohn space}, denoted $\cU_\cR$, where $\cR=(R,\p,\leq,0)$ is a countable totally ordered commutative monoid with least element $0$, or \textit{distance monoid} (see Definition \ref{def:AMS}). We refer to generalized metric spaces taking distances in $\cR$ as \textit{$\cR$-metric spaces}. The space $\cU_\cR$ is then defined to be the unique countable, ultrahomogeneous $\cR$-metric space, which is universal for finite $\cR$-metric spaces. Explicitly, $\cU_\cR$ is the \Fraisse\ limit of the class of finite $\cR$-metric spaces, and its existence follows from the work in the prequel \cite{CoDM1} to this paper, which generalizes previous results of Delhomm\'{e}, Laflamme, Pouzet, and Sauer \cite{DLPS}. In particular, associativity of $\p$ is not required to define $\cR$-metric spaces and, in fact, characterizes when the class of finite $\cR$-metric spaces is a \Fraisse\ class (see \cite[Theorem 5]{Sa13b} or \cite[Proposition 5.7]{CoDM1}). We give a few examples of historical and mathematical significance.

\begin{example}\label{allEX}
$~$
\begin{enumerate}
\item Let $\cQ=(\Q^{\geq0},+,\leq,0)$ and $\cQ_1=(\Q\cap[0,1],+_1,\leq,0)$, where $+_1$ is addition truncated at $1$. Then $\cU_\cQ$ and $\cU_{\cQ_1}$ are, respectively, the \textit{rational Urysohn space} and \textit{rational Ursyohn sphere}. The completion of $\cU_\cQ$ is called the \textit{Urysohn space}, and is the unique complete, separable metric space, which is homogeneous and universal for separable metric spaces. The completion of $\cU_{\cQ_1}$ is called the \textit{Urysohn sphere} and satisfies the same properties with respect to separable metric spaces of diameter $\leq 1$. These spaces were originally constructed by Urysohn in 1925 (see \cite{Ury}, \cite{Ury2}). 

\item Let $\cR_2=(\{0,1,2\},+_2,\leq,0)$, where $+_2$ is addition truncated at $2$. Then $\cU_{\cR_2}$ is isometric to the \textit{countable random graph} or \textit{Rado graph} (when equipped with the path metric). A directed version of this graph was first constructed by Ackermann in 1937 \cite{AckerRG}. The undirected construction is usually attributed to Erd\H{o}s and R\'{e}nyi \cite[1963]{ErdRen} or Rado \cite[1964]{Rado}. 
\item Generalize the previous example as follows. Fix $n>0$ and let $\cR_n=(\{0,1,\ldots,n\},+_n,\leq,0)$, where $+_n$ is addition truncated at $n$. Let $\cN=(\N,+,\leq,0)$. We refer to $\cU_{\cR_n}$ as the \textit{integral Urysohn space of diameter $n$}, and to $\cU_\cN$ as the \textit{integral Urysohn space}. These spaces were constructed by Pouzet and Roux \cite[1996]{PoRo} and Cameron \cite[1998]{CaDTG}.  Also, Casanovas and Wagner \cite[2004]{CaWa} construct the \textit{free $n\uth$ root of the complete graph}. As with $n=2$, equipping this graph with the path metric yields $\cU_{\cR_n}$. 
\item Generalize all of the previous examples as follows. Fix a countable subset $S\seq\R^{\geq0}$ closed under the operation $r+_S s:=\sup\{x\in S:x\leq r+s\}$ and containing $0$. Let $\cS=(S,+_S,\leq,0)$ and assume $+_S$ associative. Then we have the $\cS$-Urysohn space $\cU_\cS$. This situation is studied in further generality by Delhomm\'{e}, Laflamme, Pouzet, and Sauer \cite{DLPS}.

\item For an example of a different flavor, fix a countable linear order $(R,\leq,0)$, with least element $0$, and let $\cR=(R,\max,\leq,0)$. We refer to $\cU_\cR$ as the \textit{ultrametric Urysohn space over $(R,\leq,0)$}. Explicit constructions of these spaces are given by Gao and Shao in \cite{GaCh}. Alternatively, $\cU_\cR$ is a countable model of the theory of infinitely refining equivalence relations indexed by $(R,\leq)$. These are standard model theoretic examples, often used to illustrate various behavior in the stability spectrum (see \cite[Section III.4]{Babook}). 
\end{enumerate}
\end{example}

We will consider model theoretic properties of $\cR$-Urysohn spaces. In particular, given a countable distance monoid $\cR$, we let $\TR{\cR}$ be the complete $\cL_R$-theory of $\UR{\cR}$, where $\cL_R$ is a first-order language consisting of binary relations $d(x,y)\leq r$, for $r\in R$. In \cite{CoDM1}, we constructed a ``nonstandard" distance monoid extension $\cR^*$ of $\cR$, with the property that any model of $\TR{\cR}$ is canonically an $\cR^*$-metric space (see Theorem \ref{thm:R*} below). We let $\MU{\cR}$ denote a sufficiently saturated monster model of $\TR{\cR}$. Then, as an $\cR^*$-metric space, $\MU{\cR}$ is $\kappa^+$-universal, where $\kappa$ is the saturation cardinal (see \cite[Proposition 6.1]{CoDM1}). However, in order to conclude that $\U_\cR$ is also $\kappa$-homogeneous as an $\cR^*$-metric space, we must assume quantifier elimination. Therefore we say that $\cR$ is a \textbf{Urysohn monoid} if it is a countable distance monoid and $\TR{\cR}$ has quantifier elimination. In \cite[Theorem  6.10]{CoDM1}, we characterized quantifier elimination via continuity of addition in $\cR^*$ (see Theorem \ref{thm:QE} below). This motivates a general schematic for analyzing the model theoretic behavior of $\TR{\cR}$. 

\begin{definition}
Let $\RUS$ denote the class of $\cR$-Urysohn spaces $\cU_\cR$, where $\cR$ is a Urysohn monoid. We say a property $P$ of $\RUS$ is \textbf{axiomatizable} (resp. \textbf{finitely axiomatizable}) if there is an $\cL_{\omega_1,\omega}$-sentence (resp. $\cL_{\omega,\omega}$-sentence) $\vphi_P$, in the language of ordered monoids, such that, if $\cR$ is a Urysohn monoid, then $\cU_\cR$ satisfies $P$ if and only if $\cR\models\vphi_P$.
\end{definition} 

Regarding this definition, it is worth mentioning that there is a first-order sentence $\oms{QE}$, in the language of ordered monoids, such that a countable distance monoid $\cR$ is a Urysohn monoid if and only if $\cR\models\oms{QE}$ (see \cite[Corollary 6.11]{CoDM1}). Therefore, if some property $P$ is axiomatizable with respect to the class of \textit{all} $\cR$-Urysohn spaces, then $P$ is also axiomatizable relative to $\RUS$. 

Our results in this direction begin with notions around stability and simplicity. In particular, the ultrametric spaces in Example \ref{allEX}(5) are well-known to be stable when considered as refining equivalence relations. Furthermore, the countable random graph (Example \ref{allEX}(2)) is a canonical example of a simple unstable structure. In Section \ref{sec:low}, we use a general characterization of forking and dividing in $\TR{\cR}$, in conjunction with several other ternary relations on $\U_\cR$, to prove the following theorem, which summarizes Theorems \ref{thm:stable}, \ref{thm:simple}, \ref{thm:supsim}, and \ref{thm:supstab}.  

\begin{alphatheorem}\label{prethm:SS}$~$
\begin{enumerate}[$(a)$]
\item Stability and simplicity are finitely axiomatizable properties of $\RUS$. In particular, given a Urysohn monoid $\cR$,
\begin{enumerate}[$(i)$]
\item $\TR{\cR}$ is stable if and only if $\cR$ is ultrametric, i.e., for all $r,s\in R$, $r\p s=\max\{r,s\}$;
\item $\TR{\cR}$ is simple if and only if, for all $r\leq s$ in $R$, $r\p r\p s=r\p s$.
\end{enumerate}
\item Suppose $\cR$ is a Urysohn monoid. Then $\TR{\cR}$ is supersimple if and only if $\TR{\cR}$ is simple and the submonoid of idempotent elements of $\cR$ is well-ordered. Moreover, in this case $SU(\TR{\cR})$ is the order type of the set of non-maximal idempotents in $\cR$. Applying part $(a)(i)$, $\TR{\cR}$ is superstable if and only if $\TR{\cR}$ is stable and $\cR$ is well-ordered.
\end{enumerate}
\end{alphatheorem}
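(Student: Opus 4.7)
The plan is to extract the algebraic conditions on $\cR$ from the combinatorial characterizations of stability, simplicity, and their super variants, applying them to the forking/dividing analysis for $\TR{\cR}$ developed in Section~\ref{sec:low}. Throughout, $\aleph_0$-categoricity of $\TR{\cR}$ (from the countable language and ultrahomogeneity) means order-property and tree-property tests over finite parameter sets suffice, and consistency of any desired configuration reduces to a \Fraisse\ amalgamation computation inside the class of finite $\cR$-metric spaces.

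For part~(a)(i), the ``if'' direction is classical: when $\p=\max$, the relations $d(x,y)\leq r$ form a refining chain of $\emptyset$-definable equivalence relations indexed by $(R,\leq)$, so $\TR{\cR}$ reduces to the stable theory of Example~\ref{allEX}(5). For ``only if'', I would fix $r,s$ with $r\p s>t:=\max\{r,s\}$ and witness the order property for the formula $d(x,y)\leq t$ by constructing sequences $(a_i)_{i<\omega}$ and $(b_j)_{j<\omega}$ with $d(a_i,b_j)\leq t$ exactly when $i\leq j$; the complementary distances can be taken to be $r\p s$, and the required finite $\cR$-metric subconfigurations can be assembled by successive amalgamation.

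For part~(a)(ii), the condition $r\p r\p s=r\p s$ (for $r\leq s$) is precisely the quantitative amalgamation statement needed to run the independence theorem for the ternary relation of distance-nonforking inherited from Section~\ref{sec:low}: given two realizations $a_1,a_2$ of the same type over a base, with $r$-neighborhoods controlling the nonforking behavior, the condition forces the $3$-amalgam at a further point $c$ with $d(a_i,c)\leq s$ to be realizable as a genuine $\cR$-metric type. Combined with symmetry and extension, Kim--Pillay then yields simplicity. Conversely, if $r\p r\p s>r\p s$ for some $r\leq s$, I would build a tree of parameters $(b_\eta)_{\eta\in\omega^{<\omega}}$ with sibling distance $r\p r$ and arrange that along each branch a realizer exists at distance $\leq s$ from every node; any realizer of two sibling instances would force the sibling distance to lie in $r\p s$, contradicting the hypothesis, and so exhibits the tree property.

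For part~(b), assume $\TR{\cR}$ is simple. The key observation is that $d(x,y)\leq e$ defines a $\emptyset$-definable equivalence relation precisely when $e$ is idempotent, so forking is controlled by the way a type ``drops'' across these idempotent thresholds. Writing $I$ for the submonoid of idempotents, I would prove that $SU(\tp(a/B))$ equals the order type of the set of non-maximal $e\in I$ for which some $b\in B$ satisfies $d(a,b)\leq e$; this gives supersimplicity whenever $I$ is well-ordered and yields the rank statement. Conversely, a strictly decreasing sequence $e_0>e_1>\cdots$ in $I$ produces a corresponding infinite descending chain of forking extensions of a single type, breaking supersimplicity. The superstable statement follows immediately from~(a)(i), since in an ultrametric monoid every element is idempotent, so $\cR$ is well-ordered iff $I$ is. The main obstacles I anticipate are (i)~certifying that the tree constructed in part~(a)(ii) really witnesses the tree property after passing to a suitable indiscernible array, using only the structure available in $\cR^*$, and (ii)~pinning down the $SU$-rank computation so that the order type of non-maximal idempotents comes out exactly right, with no off-by-one at the top of the idempotent chain.
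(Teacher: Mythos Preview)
Your outline has the right shape but contains two concrete errors and one significant divergence from the paper's method.

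First, the claim that $\TR{\cR}$ is $\aleph_0$-categorical is false whenever $\cR$ is infinite: the language $\cL_R$ is then infinite, and already over $\emptyset$ there are $|R^*|$ many $2$-types (one for each possible distance), which is typically uncountable. You do not need $\aleph_0$-categoricity anyway; the paper works directly with the fact that $\U_\cR$ is universal and homogeneous as an $\cR^*$-metric space (Proposition~\ref{saturated}), so consistency of configurations is checked by verifying the triangle inequality in $\cR^*$, not by $\aleph_0$-categoricity.

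Second, your $SU$-rank formula is inverted. You write that $SU(a/B)$ should equal the order type of those non-maximal idempotents $e$ for which some $b\in B$ has $d(a,b)\leq e$; but with $B=\emptyset$ this set is empty, giving rank $0$ rather than the full rank of the theory. The paper's computation (Theorem~\ref{thm:supsim}) goes the other way: via Corollary~\ref{cor:simple}, forking in the simple case is governed by a drop in $2d(a,B)$, and $SU(a/B)\geq\mu$ if and only if $\mu$ embeds in $\eq(\cR^*)$ \emph{below} $2d(a,B)$. So the rank counts idempotents strictly below the current distance-to-base, not above it. The passage from $\eq(\cR^*)$ to $\eq(\cR)$ also needs a short argument (Lemma~\ref{lem:supsim}), which you have not mentioned.

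Finally, your route to simplicity via the Kim--Pillay axioms and the independence theorem is genuinely different from the paper's and, as written, vaguer. The paper never verifies the independence theorem directly. Instead it leans entirely on the explicit description of forking in Theorem~\ref{thm:forks}: Proposition~\ref{simpleFork} shows that the algebraic condition $r\p r\p s=r\p s$ is exactly what makes $\ind^f$ coincide with the simpler relation $\ind^{d_{\max}}$; one then checks (Theorem~\ref{thm:simple}) that under this condition $\ind^{\dist}$ implies $\ind^f$, so local character of $\ind^{\dist}$ (Proposition~\ref{prop:SIR}(b)(iv)) transfers to $\ind^f$. For the failure direction the paper exhibits a single explicit asymmetry of $\ind^f$ over a one-point base rather than a full tree array. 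Your tree construction and Kim--Pillay verification may well be completable, but the paper's argument is both shorter and avoids the obstacle you flag in~(i).
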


 This result fits nicely into results of Koponen \cite{Kop} on simple homogeneous structures in finite binary relational languages. In particular, for finite $\cR$, it follows from Koponen's work that, if $\TR{\cR}$ is simple, then it is supersimple with $SU$-rank bounded by $|R|$. Therefore, Theorem \ref{prethm:SS} provides a sharper bound on the $SU$-rank in the supersimple case. Part $(b)$ also implies that superstability and supersimplicity are not axiomatizable in $\RUS$ (see Corollary \ref{cor:undef}).
 
Having established the presence of generalized Urysohn spaces in the most well-behaved regions of classification theory, we then turn to the question of how complicated $\TR{\cR}$ can be. For example, Theorem \ref{prethm:SS} immediately implies that the rational Urysohn space (Example \ref{allEX}(1)) is not simple. This is a well-known fact, which was observed for the complete Urysohn sphere in continuous logic by Pillay (see \cite{EaGo}). Casanovas and Wagner give a similar argument in \cite{CaWa} to show that $\TR{\cQ_1}$ is not simple. Regarding an upper bound in complexity, it is shown in \cite{CoTe} (joint work with Caroline Terry) that the complete Urysohn sphere does not have the \textit{fully finitary} strong order property. Altogether, this work sets the stage for the main result of Section \ref{sec:NFSOP}, which gives the following upper bound for the complexity of $\TR{\cR}$ (see Corollary \ref{cor:NFSOP}).

\begin{alphatheorem}
If $\cR$ is a Urysohn monoid then $\TR{\cR}$ does not have the finitary strong order property.
\end{alphatheorem}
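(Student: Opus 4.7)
The plan is to argue by contradiction, using the quantifier elimination guaranteed by the Urysohn monoid hypothesis together with a Ramsey reduction to indiscernibles and a metric amalgamation step that breaks transitivity. Suppose $\TR{\cR}$ has $\FSOP$, witnessed by a formula $\phi(\bar x,\bar y)$ with $|\bar x|=|\bar y|=k$ defining a transitive, irreflexive relation with arbitrarily long finite chains, and let $(\bar a_i)_{i<\omega}$ in $\MU{\cR}$ satisfy $\phi(\bar a_i,\bar a_j)$ iff $i<j$. By quantifier elimination for $\TR{\cR}$, we may take $\phi$ to be a Boolean combination of atomic formulas $d(u,v)\leq r$ among the coordinates of $\bar x\bar y$ and $r\in R$.

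The next step is a standard modelling/Ramsey extraction to pass to an indiscernible $\phi$-chain. Once the sequence is indiscernible, the quantifier-free type of $(\bar a_i,\bar a_j)$ depends only on the sign of $i-j$; let $D^<$ denote the $k\times k$ matrix of distances $d(\bar a_{i,p},\bar a_{j,q})$ for $i<j$, and $D^>$ the analogous matrix for $i>j$, which is the transpose of $D^<$ by symmetry of $d$. The formula $\phi$ then corresponds to a Boolean condition on such distance matrices, true on $D^<$ and false on $D^>$.

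The core of the proof is a metric amalgamation over $\bar a_1$ producing $\bar c \in \MU{\cR}$ realizing the type of $\bar a_2$ over $\bar a_1$ but with every distance $d(\bar a_{0,p},\bar c_q)$ pushed to the largest value allowed by the triangle inequality through $\bar a_1$, namely the minimum over $r$ of $d(\bar a_{0,p},\bar a_{1,r}) \oplus d(\bar a_{1,r},\bar a_{2,q})$, computed in $\cR^*$. Such $\bar c$ exists by $\aleph_0$-saturation and universality of $\MU{\cR}$, since the maximal amalgamation is always a valid $\cR^*$-metric configuration. Because $\bar c$ has the same type as $\bar a_2$ over $\bar a_1$, we have $\phi(\bar a_1,\bar c)$, and transitivity forces $\phi(\bar a_0,\bar c)$. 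The distance matrix of $(\bar a_0,\bar c)$ is pointwise $\geq$ $D^<$ and strictly larger on at least one entry; iterating the construction along the chain $\bar a_0,\bar a_1,\ldots,\bar a_N$ (for $N$ large depending on the complexity of $\phi$) produces a configuration whose distance matrix cannot satisfy the Boolean condition defining $\phi$ simultaneously with the forced chain pattern, contradicting $\FSOP$.

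The main obstacle I expect is showing that the amalgamation step does in fact flip the truth value of $\phi$, not merely change the quantifier-free type. Since atomic formulas $d(u,v)\leq r$ are monotone in the distance, stretching distances can only flip such a conjunct in one direction, but $\phi$ is a Boolean combination, so flipping is not automatic. Overcoming this requires either a careful choice of which coordinates to stretch (guided by the Boolean structure of $\phi$) or, more robustly, an iterated amalgamation along many indices combined with a second Ramsey argument on the resulting ``matrix Boolean patterns,'' so that the finiteness of possible quantifier-free types over $k$-tuples ultimately forces a cycle incompatible with transitivity. This extends the argument of \cite{CoTe} from the Urysohn sphere to arbitrary Urysohn monoids by replacing its ad hoc use of the interval $[0,1]$ with the algebraic structure of $\cR^*$ provided by the prequel.
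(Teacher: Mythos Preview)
Your starting point misidentifies $\FSOP$. A witness to $\FSOP$ is not a formula defining a transitive, irreflexive relation with infinite chains; that is the strict order property. In the paper's formulation (Definition \ref{def:SOrank}), $\FSOP$ asks for an indiscernible sequence of tuples with finitely many non-parameter coordinates whose $2$-type admits no $n$-cycle for any $n$. You then use transitivity in an essential way (``transitivity forces $\phi(\bar a_0,\bar c)$''), so this is not a harmless relabeling: without transitivity of $\phi$ your amalgamation step no longer propagates anything to the pair $(\bar a_0,\bar c)$, and the argument stalls before it reaches the obstacle you yourself flag. Even granting transitivity, you correctly note that stretching distances need not flip a Boolean combination of atomic bounds, and the proposed repair (iterate and Ramsey again) is not an argument but a hope; nothing in the sketch explains why finitely many Boolean patterns force a cycle rather than merely a different chain.

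The paper's proof goes in a completely different direction and avoids both problems. It never touches a formula $\phi$. Instead it proves the quantitative statement (Theorem \ref{thm:cycbound}): every indiscernible sequence with $n$ non-parameter coordinates is $(n+1)$-cyclic. The point is that, by quantifier elimination, $n$-cyclicity of an indiscernible sequence in $\MU{\cR}$ is equivalent to a family of metric inequalities
\[
\epsilon_{i_n,i_1}\leq \epsilon_{i_1,i_2}\p\epsilon_{i_2,i_3}\p\cdots\p\epsilon_{i_{n-1},i_n}
\]
on the cross-distances $\epsilon_{i,j}=d(a^0_i,a^1_j)$ (Lemma \ref{lem:cycchar}); one then shows these inequalities hold automatically whenever two of the indices coincide or one of them is a parameter index (Lemma \ref{lem:cycind}). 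A one-line pigeonhole argument finishes: among $n+1$ indices drawn from the $n$ non-parameter coordinates, two must repeat. There is no amalgamation, no iteration, and no second Ramsey step.
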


In Section \ref{sec:SOR}, we address the region of complexity between simplicity and the finitary strong order property, which, in general, is stratified by Shelah's $\SOP_n$-hierarchy. Concerning $\TR{\cR}$, we first use the characterizations of stability and simplicity to formulate a purely algebraic notion of the \textit{archimedean complexity}, $\arch(\cR)$, of a general distance monoid $\cR$ (see Definition \ref{def:arch}). In particular, $\TR{\cR}$ is stable (resp. simple) if and only if $\arch(\cR)\leq 1$ (resp. $\arch(\cR)\leq 2$). We then use this rank to pinpoint the exact complexity of $\TR{\cR}$.

\begin{alphatheorem}
If $\cR$ is a Urysohn monoid and $n\geq 3$, then $\TR{\cR}$ is $\SOP_n$ if and only if $\arch(\cR)\geq n$.
\end{alphatheorem}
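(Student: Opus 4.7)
The plan is to exploit the characterization of $\arch(\cR)\geq n$ (suggested by Theorem~A, Definition~\ref{def:arch}) as the existence of $r\leq s$ in $\cR$ with $(n{-}1)r\p s<nr\p s$, where $kr$ denotes the $k$-fold sum of $r$ with itself. Stability and simplicity correspond to $n=1$ and $n=2$, so the task is to lift those arguments one rung at a time up Shelah's $\SOP_n$-hierarchy. Throughout, I would work in the monster model $\MU{\cR}$, viewed as an $\cR^*$-metric space, and use quantifier elimination freely.

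For the direction $\arch(\cR)\geq n\Rightarrow \SOP_n$, I would construct an explicit witness. Fix $r\leq s$ with $t:=(n{-}1)r\p s<nr\p s$ and work with tuple variables $\bar x=(x_1,x_2)$. Define a formula $\phi(\bar x,\bar y)$ roughly of the form
\[
d(x_1,x_2)=s\;\wedge\;d(y_1,y_2)=s\;\wedge\;d(x_2,y_1)\leq r\;\wedge\;d(x_1,y_2)>t,
\]
where the last conjunct is what enforces asymmetry (and hence forbids short cycles). Using the $\cR^*$-universality of $\MU{\cR}$ (\cite[Proposition 7.1]{CoDM1}), one realizes an $\omega$-chain $(\bar a_i)$ satisfying $\phi(\bar a_i,\bar a_j)$ for $i<j$, by amalgamating the finite metric configurations inductively: the chain-type is consistent because any finite piece is realized by a finite $\cR$-metric space. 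Conversely, a putative $n$-cycle $\bar b_0,\ldots,\bar b_{n-1}$ would produce a closed path in $\MU{\cR}$ along which the triangle inequality accumulates $n$ copies of $r$ with the interior $s$-segments, forcing $d(b_0^1,b_0^2)\leq nr\p s\leq$ something bounded by $t$, contradicting the strict inequality $nr\p s>t$ together with the ``$>t$'' conjunct of $\phi$. The choice of $\phi$ must be fine-tuned so that the chain inequalities are consistent while the cycle inequality is not; the strict jump at the $n$-th step of the archimedean sequence is exactly what makes this work.

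For the direction $\arch(\cR)<n\Rightarrow \NSOP_n$, I would show that any chain realizing a formula $\phi(\bar x,\bar y)$ admits a cyclic closure of length $n$. By quantifier elimination, $\phi$ is (equivalent to) a Boolean combination of atomic distance inequalities $d(x_i,y_j)\leq r$ and their negations, and one can reduce to a conjunction of such constraints after the standard pigeonhole on types. Given a chain $(\bar a_i)_{i<\omega}$ with $\phi(\bar a_i,\bar a_j)$ for $i<j$, I would construct $\bar b_0,\ldots,\bar b_{n-1}$ realizing $\phi$ cyclically as follows: copy the $2$-type of consecutive elements from the chain around the cycle, and then verify using the identity $(n{-}1)r\p s=nr\p s$ (for all $r\leq s$) that the induced metric on the combined tuple satisfies the $\cR^*$-metric inequality, so the configuration embeds into $\MU{\cR}$ by universality. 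The archimedean collapse is precisely what is needed to ensure that the longest path around the cycle, which sums $n$ successive distances, does not exceed the constraint demanded by the ``short-cut'' distance along a single chain step.

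The main obstacle will be the forward direction. Although the idea is conceptually clean — close chains into cycles by amalgamation, using the archimedean identity to check the relevant triangle inequalities — carrying it out for arbitrary quantifier-free $\phi$ requires some work, since one must control distance constraints among \emph{all} pairs of coordinates across the cycle. I expect the argument to require induction on the complexity of $\phi$, or a normal-form reduction to a single ``critical'' distance inequality, together with the associativity and continuity of $\p$ in $\cR^*$ (Theorem~\ref{thm:QE}). A secondary obstacle is pinning down the precise form of $\phi$ in the reverse direction so that the triangle inequality bound realizing the contradiction is tight; the choice above is a natural candidate but may need to be adjusted depending on the exact definition of $\arch(\cR)$ in Definition~\ref{def:arch}.
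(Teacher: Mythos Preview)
Your proposed characterization of $\arch(\cR)\geq n$ as ``there exist $r\leq s$ with $(n{-}1)r\p s<nr\p s$'' is not correct, and this breaks the construction in the $\arch(\cR)\geq n\Rightarrow\SOP_n$ direction. The paper's Definition~\ref{def:arch} requires the existence of $r_1\leq\ldots\leq r_n$ with $r_2\p\ldots\p r_n<r_1\p r_2\p\ldots\p r_n$, and this does \emph{not} reduce to two values. A concrete counterexample appears in the paper itself (Section~6.3): for $\cS=(\{0,1,2,5,6,7\},+_S,\leq,0)$ one has $\arch(\cS)=3$ (witnessed by $1+_S 5=6<7=1+_S 1+_S 5$), yet $2r+_S s=3r+_S s$ for every $r\leq s$ in $S$ (since $3\cdot 1=2$, all larger sums saturate at $7$). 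So your two-parameter formula $\phi(\bar x,\bar y)$ cannot be built from such $r,s$, and the witness to $\SOP_n$ must instead use $n$-tuples encoding the full sequence $r_1,\ldots,r_n$. The paper does exactly this via Lemma~\ref{lem:SOseq}, which builds an indiscernible sequence of $n$-tuples from $\alpha_1\leq\ldots\leq\alpha_n$.

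For the converse direction, your plan to close arbitrary quantifier-free $\phi$-chains into $n$-cycles using only the identity $(n{-}1)r\p s=nr\p s$ is both too weak (same counterexample: this identity holds in $\cS$ but $\cS$ is $\SOP_3$) and unnecessarily hard. The paper avoids formula-level combinatorics entirely by passing to the strong order rank formulation via indiscernible sequences (Fact~\ref{fact:SOP}), then using Lemma~\ref{lem:cycchar} to reduce $n$-cyclicity to transitivity of the distance array $(\epsilon_{i,j})$. The decisive step is Proposition~\ref{prop:SOcyc}, which collapses the problem to ``diagonally indiscernible tuples'' $(\alpha_1,\ldots,\alpha_n)\in(R^*)^n$, and Lemma~\ref{lem:diag}, which shows that for any such tuple $\alpha_n\leq\alpha_1\p\ldots\p\alpha_{n-1}\p 2\alpha_i$ for each $i$. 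Sorting the $\alpha_i$'s and applying this with $i$ chosen so that $\alpha_i=\beta_1$ is minimal yields $\beta_1\p\ldots\p\beta_{n-1}<\beta_1\p\beta_1\p\ldots\p\beta_{n-1}$, which is $\arch(\cR^*)\geq n$ in its genuine $n$-parameter form. The point is that the obstruction to closing cycles is captured by a single inequality among $n$ distances, not by a two-parameter collapse, and the indiscernible-sequence machinery is what lets you isolate that inequality cleanly.
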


In particular, this seems to give the first natural class of structures, in which the $\SOP_n$-hierarchy is uniformly represented by meaningful structural behavior independent of combinatorial dividing lines.  

Finally, in Section \ref{sec:EHI}, we consider the question of \textit{elimination of hyperimaginaries}. Hyperimaginaries were introduced in \cite{HKP} in order to define canonical bases in simple theories, which naturally led to the question of when they can be ``eliminated" down to imaginaries (see Definition \ref{def:EHI}). It is known that stable and supersimple theories eliminate hyperimaginaries (see, e.g., \cite{Cabook}), and a major open problem is whether the same is true for all simple theories. In a search for counterexamples, Casanovas and Wagner \cite{CaWa} found the first example of a (non-simple) theory without the strict order property that does not eliminate hyperimaginaries. In particular, they showed that $\TR{\cQ_1}$ is such a theory (although they did not identify their theory as such, see \cite[Proposition 7.5]{CoDM1}). We adapt their methods to give necessary conditions for elimination of hyperimaginaries for $\TR{\cR}$, where $\cR$ is any Urysohn monoid. In proving this, we also characterize weak elimination of imaginaries (see Theorem \ref{thm:WEI}).

\begin{alphatheorem}
If $\cR$ is a Urysohn monoid then $\TR{\cR}$ has weak elimination of imaginaries if and only if $\cR$ has no nonzero and non-maximal idempotents.
\end{alphatheorem}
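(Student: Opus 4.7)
The plan is to adapt the Casanovas--Wagner approach of \cite{CaWa} to arbitrary Urysohn monoids, leveraging ultrahomogeneity of $\U_\cR$ together with a classification of idempotent-induced equivalence relations.

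For the nontrivial half (failure of WEI when a nonzero non-maximal idempotent exists), suppose $e \in R$ is such an idempotent. Fix $a \in \U_\cR$; the formula $d(x,y) \leq e$ defines a $\emptyset$-definable equivalence relation (transitivity uses $e \p e = e$), whose class $B := \{x \in \U_\cR : d(x,a) \leq e\}$ we treat as an imaginary. I claim no real tuple witnesses WEI for $B$. First, $B$ is infinite: since $e \p e = e$, universality embeds into $\U_\cR$ a finite equilateral subspace of any cardinality with all pairwise distances equal to $e$. Second, idempotence yields $B_e(x) = B$ for every $x \in B$ (two applications of the triangle inequality), so by ultrahomogeneity the setwise stabilizer $G_B \leq \Aut(\U_\cR)$ acts transitively on $B$. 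Suppose for contradiction that $\bbar = (b_1, \dots, b_n) \in \U_\cR^n$ satisfies $B \in \edcl(\bbar)$ and $\bbar \in \eacl(B)$. If some $b_i \in B$, then transitivity of $G_B$ on the infinite set $B$ gives $b_i$ infinitely many $G_B$-conjugates, contradicting $\bbar \in \eacl(B)$. Otherwise every $b_i \notin B$, so $d(b_i,a) > e$. Set $r := \min_i\bigl(d(b_i,a) \p d(b_i,a)\bigr)$; by monotonicity of $\p$, $r \geq d(b_i,a) > e$. By universality of $\U_\cR$ there exists $a' \in \U_\cR$ with $d(a,a') = r$ and $d(a',b_i) = d(a,b_i)$ for every $i$, since the only nontrivial triangle inequalities to verify are $r \leq d(b_i,a) \p d(b_i,a)$, which hold by definition of $r$. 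Ultrahomogeneity provides an automorphism fixing $\bbar$ pointwise and sending $a \mapsto a'$; this maps $B = B_e(a)$ to $B_e(a')$, which differs from $B$ since $d(a,a') = r > e$ forces $a \notin B_e(a')$. This contradicts $B \in \edcl(\bbar)$.

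For the converse, assume $\cR$ has no nonzero non-maximal idempotents, so the only idempotents of $\cR$ are $0$ and, when it exists, the maximum of $R$. The aim is to show every imaginary is interalgebraic (in the WEI sense) with a real tuple. Using QE in the language $\cL_R$, every $\emptyset$-definable equivalence relation on $\U_\cR^2$ has the form $d(x,y) \in S$ for some $S \subseteq R$; transitivity forces $S$ to be closed under $\p$ along realizable triangles and downward closed (both via universality of $\U_\cR$), yielding $S = \{r \in R : r \leq f\}$ for some idempotent $f \in R$. Under our hypothesis this is either equality or the full relation. A coordinate-wise QE analysis in higher dimension then reduces the classification of $\emptyset$-definable equivalence relations on $\U_\cR^n$ either to the collapsed distance-ball case, or to equivalence by permutation of coordinates, whose classes are finite sets of real tuples and are thus WEI-coded by any enumeration. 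This yields WEI.

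The main obstacle is the converse direction, where the classification of $\emptyset$-definable equivalence relations on $\U_\cR^n$ requires careful use of QE together with the characterization of $\eacl$ and $\edcl$ in $\TR{\cR}$, paralleling the analysis of Casanovas and Wagner in \cite{CaWa} for $\cR = \cQ_1$. The first half is, by contrast, essentially an exploitation of ultrahomogeneity, universality, and the identity $e \p e = e$.
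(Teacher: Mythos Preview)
Your forward direction (failure of WEI given a nonzero non-maximal idempotent $e$) is correct and is essentially the paper's argument: both construct the ball imaginary $B_e(a)$ and run a two-case analysis on whether a proposed weak canonical parameter meets $B$. The paper uses $d(a,a')=\min_i d(a,b_i)$ rather than your $\min_i(d(a,b_i)\p d(a,b_i))$, but either choice verifies the needed triangle inequalities and yields $d(a,a')>e$.

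The converse, however, has a genuine gap. Your treatment of $\emptyset$-definable equivalence relations on singletons is fine (modulo the slip ``on $\U_\cR^2$'' for ``on $\U_\cR$''), but the sentence ``A coordinate-wise QE analysis in higher dimension then reduces the classification of $\emptyset$-definable equivalence relations on $\U_\cR^n$\ldots'' is not an argument, and this reduction is precisely where the content lies. The paper handles it via a result of Kruckman (Proposition~\ref{trivEQ}): if, for every \emph{finite} $C$ and every $p\in S_1(C)$, every $C$-definable equivalence relation on $p$ is equality or trivial, then every $C$-definable equivalence relation on any $p\in S_n(C)$ is definable in pure equality (a subset $I$ of coordinates together with a subgroup of $\Sym(I)$), from which WEI follows immediately (Corollary~\ref{cor:trivEQ}). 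Two points you are missing: first, the $1$-variable hypothesis must be checked over \emph{arbitrary finite parameter sets} $C$, not just $C=\emptyset$; the paper does this in Lemma~\ref{lem:WEI}, showing that a $C$-definable equivalence relation $E$ on $p\in S_1(C)$ is equivalent on $p$ to $d(x,y)\le\alpha(E,p)$ for some threshold $\alpha(E,p)$, and that $\alpha(E,p)\in\eq(\cR)$ whenever $\alpha(E,p)<2\alpha(p)$. Second, the output of the reduction is richer than your dichotomy ``collapsed distance-ball or permutation of coordinates'': one gets a proper subset $I$ and a subgroup of $\Sym(I)$, with the subtuple indexed by $I$ serving as weak canonical parameter. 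Without something playing the role of Kruckman's reduction and the over-parameters analysis of $1$-types, your converse remains a sketch of intent rather than a proof.
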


In the case that $\cR$ has finite archimedean complexity (i.e. $\TR{\cR}$ is $\NSOP_n$ for some $n\geq 3$), it follows that $\TR{\cR}$ has weak elimination of imaginaries if and only if $\cR$ is archimedean as an ordered monoid. In \cite{CoStr}, this characterization of weak elimination of imaginaries is used to provide a counterexample to a question of Adler \cite{Adgeo} concerning theories with multiple strict independence relations (see Remark \ref{rem:thorn}).

\section{Preliminaries}\label{sec:prelim}

\subsection{Classification Theory}\label{sec:preCT}

In this section, $T$ denotes a complete first-order theory and $\M$ denotes a sufficiently saturated monster model of $T$. We specify notation and conventions, which will apply throughout the paper. We write $A\subset\M$ to denote that $A$ is a subset of $\M$ and $|A|<|\M|$. We write $\bbar\in\M$ to denote that $\bbar$ is a tuple of elements of $\M$. Tuples may be infinite in length, but always smaller in cardinality than $\M$. The letters $a,b,c,\ldots$ always denote \textit{singletons} in $\M$. We use $\ell(\abar)$ to denote the length, or domain, of the tuple $\abar$. By convention, all indiscernible sequences are over infinite index sets, and \emph{indiscernible} means \emph{order indiscernible}. Given $A,B,C\subset\M$, we use the notation $A\ind^f_C B$ (resp. $A\ind^d_C B$) to denote that $\tp(A/BC)$ does not fork (resp. divide) over $C$. 

We assume the reader is familiar with the basics of forking, dividing, stability, and simplicity. We will use the following fact concerning the behavior of nonforking in simple and stable theories.

\begin{fact}\label{fact:low}\textnormal{\cite[Theorem 2.6.1, Remark 2.6.9]{Wabook}}
$~$
\begin{enumerate}[$(a)$]
\item $T$ is simple if and only if $\ind^f$ satisfies symmetry if and only if $\ind^f$ satisfies local character.
\item $T$ is stable if and only if $\ind^f$ satisfies symmetry and stationarity over models.
\end{enumerate}
\end{fact}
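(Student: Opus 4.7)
The plan is to derive both equivalences from the Kim--Pillay theory of simple theories, as developed in the sources the paper cites. Throughout I will use three standard background facts: $A\ind^f_C B$ implies $A\ind^d_C B$ (non-forking is stronger than non-dividing); $\ind^f$ always satisfies the extension property; and the Kim--Pillay theorem characterizes simple theories as exactly those $T$ admitting an $\Aut(\M/C)$-invariant ternary relation satisfying symmetry, transitivity, finite character, extension, and local character, in which case $\ind^f = \ind^d$ is the unique such relation.

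For part $(a)$, I would establish both bi-implications by passing through simplicity. Simple $\Rightarrow$ local character is a theorem of Kim: in a simple theory forking coincides with dividing, and Morley sequences produce a small witnessing base for every type. The converse follows by contrapositive: if $T$ is not simple, some formula has the tree property, and compactness along branches of the tree yields an infinite strictly forking chain, contradicting local character. For the other equivalence, simple $\Rightarrow$ symmetry is again due to Kim, proved by combining Morley-sequence arguments with the identification of forking and dividing to transpose the two sides of a nonforking extension. The reverse direction, symmetry $\Rightarrow$ simple, is the deepest step: Kim's argument converts a putative tree-property witness into an asymmetric forking configuration, contradicting symmetry of $\ind^f$.

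For part $(b)$, the forward direction stable $\Rightarrow$ symmetry $+$ stationarity over models is classical. Stability implies simplicity (Shelah), and part $(a)$ then delivers symmetry; stationarity over models is the uniqueness of nonforking extensions from a model, which in stable theories is immediate from definability of types over models. For the converse, symmetry of $\ind^f$ yields simplicity by part $(a)$, so every complete type over a model has a nonforking extension to the monster, and stationarity over models forces this extension to be unique. By a standard argument this uniqueness, together with boundedness of the number of nonforking extensions, is equivalent to definability of types over models, which is Shelah's defining condition for stability.

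The main obstacle is the implication symmetry $\Rightarrow$ simple in part $(a)$: this is the nontrivial content of Kim's work on simple theories, and its proof requires turning a witness to non-simplicity (a formula with the tree property, hence an asymmetric array of dividing instances) into a genuine asymmetry of $\ind^f$. All of the remaining steps are formal manipulations of the Kim--Pillay axiom scheme, the equivalence of forking and dividing in simple theories, and the standard definability-of-types characterization of stability.
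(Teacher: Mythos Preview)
The paper does not prove this statement at all: it is recorded as a \emph{Fact} with a bare citation to Wagner's book, and is used as a black box throughout. So there is no ``paper's own proof'' to compare against. Your outline is a reasonable sketch of the standard arguments behind the cited results (Kim's theorems on symmetry and local character, and the Kim--Pillay axiomatization), and nothing more than that is expected here.
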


Finally, we discuss Shelah's $\SOP_n$-hierarchy. For the sake of brevity, we present this hierarchy by means of a rank on first-order theories.

\begin{definition}\label{def:cyclic}
Suppose $\cI=(\abar^l)_{l<\omega}$ is an indiscernible sequence in $\M$. 
\begin{enumerate}
\item Given $n>0$, $\cI$ is \textbf{$n$-cyclic} if 
$$
p(\xbar^1,\xbar^2)\cup p(\xbar^2,\xbar^3)\cup\ldots\cup p(\xbar^{n-1},\xbar^n)\cup p(\xbar^n,\xbar^1)
$$
is consistent, where $p(\xbar,\ybar)=\tp(\abar^0,\abar^1)$.
\item Define the set of \textbf{non-parameter indices of $\cI$}, denoted $\NP(\cI)$, to be $\{i\in\ell(\abar^0):a^0_i\neq a^1_i\}$.
\end{enumerate}
\end{definition}

In particular, $\cI$ is $1$-cyclic if and only if $p(\xbar^1,\xbar^1)$ is consistent, which is equivalent to $\cI$ being a constant sequence. The separation of non-parameter indices will allow us to avoid working with indiscernible sequences over parameters.

\begin{definition}\label{def:SOrank}
Let $T$ be a complete first-order theory.
\begin{enumerate}
\item $T$ has the \textbf{finitary strong order property} ($\FSOP$), if there is an indiscernible sequence $\cI$ such that $\NP(\cI)$ is finite and $\cI$ is not $n$-cyclic for any $n>0$. We say $T$ is $\NFSOP$ if it does not have $\FSOP$.
\item Suppose $T$ is $\NFSOP$. We define the \textbf{strong order rank of $T$}, which is denoted $\SO(T)$ and takes values in $\omega+1$, as follows:
\begin{enumerate}[$(i)$]
\item Given $n<\omega$, $\SO(T)\leq n$ if every indiscernible sequence in $\M$ is $(n+1)$-cyclic.
\item $\SO(T)=\omega$ if $\SO(T)> n$ for all $n<\omega$. 
\end{enumerate}
\end{enumerate}
\end{definition}

Given a structure $\cM$, in some language $\cL$, we let $\SO(\cM)=\SO(\Th(\cM))$. We also say that $\SO(T)$ is \textbf{undefined} if $T$ has $\FSOP$. The following fact places this notion of strong order rank in the context of commonly known dividing lines invented by Shelah. The proofs all involve standard uses Ramsey's theorem to obtain indiscernible sequences. We refer the reader to \cite{Sh500} for the definitions of the dividing lines mentioned in the following fact. Reformulations of these definitions can also be found in \cite{Admock}. A full exposition of this treatment of the $\SOP_n$-hierarchy can be found in the author's thesis \cite[Section 1.4]{Cothesis}.

\begin{fact}\label{fact:SOP}
Let $T$ be a complete first-order theory.
\begin{enumerate}[$(a)$]
\item If $T$ has the strict order property ($\SOP$) then $\SO(T)$ is undefined.
\item Assume $\SO(T)$ is defined (i.e. $T$ is $\NFSOP$).
\begin{enumerate}[$(i)$]
\item $T$ has the strong order property ($\SOP_\omega$) if and only if $\SO(T)=\omega$.
\item Given $n\geq 3$, $\SO(T)\geq n$ if and only if $T$ has the $n$-strong order property ($\SOP_n$).
\end{enumerate}
\item If $T$ is simple then $\SO(T)\leq 2$.
\item $T$ is stable if and only if $\SO(T)\leq 1$.
\item $\SO(T)=0$ if and only if $T$ has finite models.
\end{enumerate}
\end{fact}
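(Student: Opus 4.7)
The plan is to give a unified treatment of all five clauses via a dictionary between the semantic ``not $n$-cyclic'' condition of Definition \ref{def:cyclic} and the syntactic $\SOP_n$ dividing lines. The dictionary rests on the following compactness observation: if $\cI=(\bar a^l)_{l<\omega}$ is indiscernible with $p(\bar x,\bar y)=\tp(\bar a^0,\bar a^1)$, then $\cI$ fails to be $n$-cyclic iff some single formula $\varphi(\bar x,\bar y)\in p$ has $\varphi(\bar x^1,\bar x^2)\wedge\cdots\wedge\varphi(\bar x^n,\bar x^1)$ inconsistent. When this happens, indiscernibility makes $\cI$ automatically an infinite $\varphi$-chain, so $\varphi$ witnesses $\SOP_n$; conversely, given an $\SOP_n$ witness $\varphi$ with chain $(\bar b_i)_{i<\omega}$, Ramsey extracts an indiscernible sequence with $\varphi\in p$, and $n$-cyclicity then fails because any $n$-cyclic realization would contain a $\varphi$-cycle of length $n$.

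Using this dictionary, parts (a), (b)(ii), and (b)(i) follow directly. For (b)(ii), $\SO(T)\ge n$ says some indiscernible sequence is not $n$-cyclic, which translates to $\SOP_n$ and back. For (a), $\SOP$ produces a single formula with infinite chain and no finite cycles; Ramsey yields one indiscernible sequence failing $n$-cyclicity for all $n$, and $\NP(\cI)$ is automatically finite because the tuples have fixed finite length, so $T$ has $\FSOP$ and $\SO(T)$ is undefined. For (b)(i), we appeal to the reformulation of $\SOP_\omega$ in terms of indiscernible sequences from \cite{Admock, Cothesis}; the forward direction consolidates witnessing sequences $\cI_n$ into a single indiscernible sequence failing $n$-cyclicity for every $n$ via a compactness argument in the language expanded by constants naming the sequence, and the converse is immediate from the dictionary.

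Parts (c), (d), and (e) reduce to classical facts. Part (c) combines (b)(ii) with the well-known result that simple theories are $\NSOP_3$, proved via Morley sequences and the independence theorem. For (d), note that $\cI$ is $2$-cyclic iff $\tp(\bar a^1,\bar a^0)=p$, and by indiscernibility this is equivalent to $(\bar a^l)_{l<\omega}$ being totally indiscernible as a set; hence $\SO(T)\le 1$ iff every indiscernible sequence is totally indiscernible, which is Shelah's characterization of stability. For (e), $\SO(T)=0$ iff every indiscernible $\omega$-sequence is constant; this is automatic when $\M$ is finite, while if $\M$ is infinite one applies Ramsey to an injective $\omega$-sequence of tuples to extract a non-constant indiscernible sequence. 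Completeness of $T$ forces all models to be simultaneously finite or simultaneously infinite, matching the condition ``$T$ has finite models.''

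The main obstacle is the consolidation step in part (b)(i): producing a single indiscernible sequence that fails $n$-cyclicity for every $n$ out of a family of sequences $\cI_n$ each failing only $(n+1)$-cyclicity. The resolution is to work in the language expanded by constants naming the sequence, form the theory stating indiscernibility together with the countably many partial types encoding inconsistency of each cyclic conjunction, and observe that each finite subtheory is realized by some $\cI_N$ with $N$ sufficiently large, so compactness yields the desired consolidated sequence. All remaining parts rest on standard Ramsey manipulations and the independence-theorem proof of $\NSOP_3$ for simple theories cited in part (c).
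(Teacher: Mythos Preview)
The paper itself does not prove this Fact; it simply cites \cite{Sh500}, \cite{Admock}, and \cite{Cothesis}, noting that the arguments are ``standard uses of Ramsey's theorem.'' Your sketch is broadly in that spirit, and parts $(a)$, $(b)(ii)$, $(c)$, $(d)$, $(e)$ are essentially correct as outlined.

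There is, however, a genuine gap in the consolidation step for $(b)(i)$. You assert that each finite subtheory of your compactness construction ``is realized by some $\cI_N$ with $N$ sufficiently large,'' but this is not justified and is in fact false as stated. A finite fragment of your theory will demand that the named sequence simultaneously satisfy several of the witnessing formulas $\varphi_{n_1},\ldots,\varphi_{n_k}$ (one for each non-cyclicity condition appearing), and there is no reason a single $\cI_N$ should realize $\varphi_n$ for $n\neq N$. The sequence $\cI_N$ is only known to fail $(N+1)$-cyclicity; it may well be $n$-cyclic for smaller $n$ not dividing $N+1$, and it certainly need not satisfy the particular formula $\varphi_n$ that was extracted from a different sequence $\cI_n$.

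The standard repair is to place the formulas on pairwise disjoint blocks of variables rather than trying to realize them all in one $\cI_N$. Let $\bar x=(\bar x^{(n)})_{n\geq 2}$ with the $\bar x^{(n)}$ disjoint finite tuples of the appropriate arity, and set $q(\bar x,\bar y)=\{\varphi_n(\bar x^{(n)},\bar y^{(n)}):n\geq 2\}$. Then $q$ has an infinite chain: simply concatenate arbitrary $\varphi_n$-chains coordinatewise. And $q$ admits no $m$-cycle for any $m\geq 2$, since restricting an $m$-cycle to the block $(m)$ would produce an $m$-cycle for $\varphi_m$. This $q$ witnesses $\SOP_\omega$ directly.
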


\subsection{Generalized Metric Spaces}\label{subsec:GMT}
In this section, we define distance monoids and generalized Urysohn spaces. We then briefly summarize the first-order setting for the theories of these structures, as well as the characterization of quantifier elimination from \cite{CoDM1}.

\begin{definition}\label{def:AMS}$~$
\begin{enumerate}
\item Let $\LDS=\{\p,\leq,0\}$ be the language of ordered monoids. An $\LDS$-structure $\cR=(R,\p,\leq,0)$ is a \textbf{distance monoid} if $(R,\p,0)$ is a commutative monoid and $\leq$ is a total, translation-invariant order with least element $0$.

\item Suppose $\cR$ is a distance monoid. Given a set $A$ and a symmetric function $d:A\times A\func R$, we call $d$ an \textbf{$\cR$-metric} on $A$ if
\begin{enumerate}[$(i)$]
\item for all $x,y\in A$, $d(x,y)=0$ if and only if $x=y$;
\item for all $x,y,z\in A$, $d(x,z)\leq d(x,y)\p d(y,z)$.
\end{enumerate}
In this case, $(A,d)$ is an \textbf{$\cR$-metric space}.
\item Given a countable distance monoid $\cR$, let $\cK_\cR$ denote the class of finite $\cR$-metric spaces, and let $\cU_\cR$ denote the unique (up to isomorphism) countable \Fraisse\ limit of $\cK_\cR$. We call $\cU_\cR$ the \textbf{$\cR$-Urysohn space}. $\cU_\cR$ is the unique (up to isometry) $\cR$-metric space, which is ultrahomogeneous and universal for finite $\cR$-metric spaces.
\end{enumerate}
\end{definition}

The reader may verify that, given a countable distance monoid $\cR$, $\cK_\cR$ is indeed a \Fraisse\ class. The only nontrivial verification is the amalgamation property, and one may simply use the natural generalization of free amalgamation of metric spaces (see \cite[Definition 5.10]{CoDM1}).

\begin{definition}
Suppose $\cR$ is a countable distance monoid. 
\begin{enumerate}
\item Let $\cL_R=\{d(x,y)\leq r:r\in R\}$, where $d(x,y)\leq r$ is a binary relation. We interpret $\cR$-metric spaces as $\cL_R$-structures in the obvious way.
\item Let $\TR{\cR}$ denote the complete $\cL_R$-theory of $\UR{\cR}$. Let $\MU{\cR}$ be a sufficiently saturated monster model of $\UR{\cR}$.
\end{enumerate}
\end{definition}

If $\cR$ is infinite then it is easy to see that saturated models of $\TR{\cR}$ are not $\cR$-metric spaces in any reasonable sense. For example, with $\cQ=(\Q^{\geq0},+,\leq,0)$, $\U_{\cQ}$ exhibits ``distances" that can be naturally interpreted as infinite, infinitesimal, or irrational. However, in \cite{CoDM1}, we proved the following result.

\begin{theorem}\label{thm:R*}\textnormal{\cite{CoDM1}}
Suppose $\cR$ is a countable distance monoid. Then there is a distance monoid extension $\cR^*=(R^*,\p,\leq,0)$ of $\cR$ such that:
\begin{enumerate}[$(a)$]
\item Given $M\models\TR{\cR}$ and $a,b\in M$, there is a unique $\alpha=\alpha(a,b)\in R^*$ such that, for all $r\in R$, $M\models d(a,b)\leq r$ if and only if $\alpha\leq r$. Moreover, if $d_M:M\times M\func R^*$ is such that $d_M(a,b)=\alpha(a,b)$, then $(M,d_M)$ is an $\cR^*$-metric space.
\item $\cR^*$ satisfies the following analytic properties.
\begin{enumerate}[$(i)$]
\item $(R^*,\leq)$ is a Dedekind complete linear order with a maximal element (which coincides with the maximal element of $(R,\leq)$ if one exists).
\item Every non-maximal $r\in R$ has an immediate successor in $R^*$.
\item For all $\alpha,\beta\in R^*$, $\alpha\p\beta=\inf\{r\p s:r,s\in R,~\alpha\leq r,~\beta\leq s\}$.
\end{enumerate}
\end{enumerate}
\end{theorem}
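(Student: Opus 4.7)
The plan is to construct $\cR^*$ as a Dedekind-style completion of the linear order $(R,\leq)$, extend $\p$ to $R^*$ via the infimum formula in (b)(iii), and then read off (a) from the defining relations of $\TR{\cR}$.

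First, I would take $R^*$ to consist of upward-closed subsets $U \subseteq R$, ordered by reverse inclusion, with embedding $\iota : R \to R^*$ given by $\iota(r) = \{r' \in R : r' \geq r\}$. To make the maximal element behave as stated in (b)(i), exclude $\emptyset$ when $R$ has a maximum and include it otherwise. Linearity is inherited from $R$, since two upward-closed subsets of a linear order are always $\subseteq$-comparable, and Dedekind completeness is immediate: suprema are intersections and infima are unions. Every non-maximal $r \in R$ has immediate successor $\{r' \in R : r' > r\}$ in $R^*$, since a strictly intermediate upward-closed set would have to decide only whether it contains $r$. Next, define $\p$ on $R^*$ by the infimum formula in (b)(iii); in cut language this says that $U_{\alpha \p \beta}$ is the upward closure in $R$ of $\{r \p s : r \in U_\alpha,\ s \in U_\beta\}$. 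Commutativity, the identity law (using $r \p 0 = r$), and translation invariance follow from the corresponding properties in $R$, and the embedding $\iota$ preserves $\p$ because $\iota(r) \p \iota(s) = \iota(r \p s)$ by translation invariance in $R$.

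For (a), given $M \models \TR{\cR}$ and $a, b \in M$, set $U(a,b) := \{r \in R : M \models d(a,b) \leq r\}$. This is upward closed because the axiom $d(x,y) \leq r \to d(x,y) \leq r'$ for $r \leq r'$ in $R$ holds in $\UR{\cR}$ and hence in $\TR{\cR}$, and it is non-empty whenever $R$ has a maximum since $\UR{\cR}$ has bounded diameter in that case. Setting $\alpha(a,b) := U(a,b)$ yields the characterization $M \models d(a,b) \leq r$ iff $\alpha(a,b) \leq \iota(r)$ by unwinding definitions, and uniqueness follows since two elements of $R^*$ with identical cuts coincide. To see that $(M, d_M)$ is an $\cR^*$-metric space: $d_M(a,b) = 0$ iff $U(a,b) = R$ iff $M \models d(a,b) \leq 0$ iff $a = b$; symmetry transfers from the axiom $d(x,y) \leq r \leftrightarrow d(y,x) \leq r$; and for the triangle inequality, whenever $r \in U(a,b)$ and $s \in U(b,c)$, the axiom $d(x,y) \leq r \wedge d(y,z) \leq s \to d(x,z) \leq r \p s$ puts $r \p s \in U(a,c)$, giving $\alpha(a,c) \leq \alpha(a,b) \p \alpha(b,c)$.

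The main obstacle is the associativity of $\p$ on $R^*$, where the Dedekind-completion formalism and the infimum formula must be reconciled with the monoid structure on $R$. The key point is that the infimum in $R^*$ of a set of elements of $R$ coincides with the upward closure of that set in $R$; once this is established, one shows that both $(\alpha \p \beta) \p \gamma$ and $\alpha \p (\beta \p \gamma)$ are represented by the upward closure of $\{r \p s \p t : r \in U_\alpha,\ s \in U_\beta,\ t \in U_\gamma\}$, using associativity in $R$. Everything else is either routine bookkeeping or a direct transfer from the axioms of $\TR{\cR}$.
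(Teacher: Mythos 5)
The paper does not actually prove this theorem itself: it is imported from \cite{CoDM1}, where $R^*$ is described as the set of quantifier-free $2$-types consistent with $\TR{\cR}$, $d_M(a,b)$ is the type realized by $(a,b)$, and $\p$ is defined model-theoretically as the largest third distance $\gamma$ for which the triangle type with sides $\alpha,\beta,\gamma$ is consistent, with $(b)(iii)$ then proved as a property (upper semicontinuity) of that operation. Your construction is correct but organized along a genuinely different route: you realize $R^*$ concretely as the upward-closed subsets of $R$ under reverse inclusion (the same underlying object, since a quantifier-free $2$-type is determined by the upward-closed set of $r$ with $d(x,y)\leq r$, and every such set is consistent), you take the formula in $(b)(iii)$ as the \emph{definition} of $\p$, and you verify the distance-monoid axioms order-theoretically---correctly isolating associativity as the one nontrivial point and resolving it via the observation that the infimum in $R^*$ of a subset of $R$ is its upward closure, so that both triple sums equal the upward closure of $\{r\p s\p t: r\in U_\alpha, s\in U_\beta, t\in U_\gamma\}$. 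Part $(a)$ then follows, as you say, from universal sentences true in $\UR{\cR}$ and hence in $\TR{\cR}$ (monotonicity in $r$, symmetry, $d(x,y)\leq 0\leftrightarrow x=y$, the axiom $d(x,y)\leq r\wedge d(y,z)\leq s\to d(x,z)\leq r\p s$, and boundedness by $\max R$ when it exists), and your handling of the maximal element and of $\emptyset$ as the infinite distance when $R$ has no maximum is exactly what $(b)(i)$ requires. Since the theorem is an existence statement, this is a legitimate and somewhat more elementary argument; what it does not deliver (and what the type-theoretic definition of $\p$ in \cite{CoDM1} gives essentially by construction) is that $\alpha\p\beta$ is the \emph{largest} distance consistent with sides $\alpha$ and $\beta$, the fact that underlies the universality of models as $\cR^*$-metric spaces (Proposition \ref{saturated}) and the practice elsewhere in the paper of realizing prescribed finite $\cR^*$-metric spaces inside $\U_\cR$---but those statements are cited separately and are not part of the theorem you were asked to prove.
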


We refer the reader to \cite{CoDM1} for explicit descriptions of $\cR^*$ and its construction. Essentially, $R^*$ corresponds to the set of quantifier-free $2$-types consistent with $\TR{\cR}$. In particular, as a set, $R^*$ depends only on $(R,\leq,0)$. Given a model $M\models\TR{\cR}$, $d_M(a,b)$ is simply the quantifier-free $2$-type realized by $(a,b)$. The operation $\p$ is extended to $\cR^*$ by defining $\alpha\p\beta$ to be the largest $\gamma\in R^*$ such that the $3$-type defining a triangle with distances $\alpha$, $\beta$, and $\gamma$ is consistent.

For example, if $\cR$ is finite then $\cR^*=\cR$. If we again consider $\cQ$, then $(\Q^{\geq0})^*$ can be identified with $\R^{\geq0}\cup\{q^+:q\in\Q^{\geq0}\}\cup\{\infty\}$, where $q^+$ is a new element immediately succeeding $q$, and $\infty$ is an infinite element. In particular, $\cR^*$ is generally \textit{not} an elementary extension of $\cR$ (viewed as $\LDS$-structures). 

A main result in \cite{CoDM1} is the following characterization of quantifier elimination for $\TR{\cR}$. 

\begin{theorem}\label{thm:QE}\textnormal{\cite[Theorem 6.10]{CoDM1}}
Suppose $\cR$ is a countable distance monoid. Then $\TR{\cR}$ has quantifier elimination if and only if, for all $r\in R$, the map $x\mapsto x\p r$ is continuous from $R^*$ to $R^*$.
\end{theorem}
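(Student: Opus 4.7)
The plan is to apply the standard extension criterion for quantifier elimination: since $\cL_R$ is a binary relational language, substructures of $\MU{\cR}$ are precisely $\cR^*$-metric subspaces by Theorem \ref{thm:R*}, so $\TR{\cR}$ has QE if and only if, for any finite $A \subset \MU{\cR}$ and any tuple $(\alpha_a)_{a\in A}$ of elements of $R^*$ satisfying the $\cR^*$-triangle inequalities against the existing distances in $A$, there exists $c \in \MU{\cR}$ with $d_{\MU{\cR}}(c,a) = \alpha_a$ for every $a \in A$. By compactness together with universality of $\MU{\cR}$ for finite $\cR$-metric spaces, realizing such a $c$ reduces to a finitary problem: given any choice of $\cR$-bounds $s_a < \alpha_a \leq r_a$ (with $s_a, r_a \in R$) and of $\cR$-bounds on the already-present $\cR^*$-distances $d_*(a,a')$, produce a finite $\cR$-metric space extending the $\cR$-rounded configuration on $A$ by a new point $c^*$ satisfying $s_a < d(c^*,a) \leq r_a$.

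For the backward direction, I would construct such an $\cR$-metric extension by choosing $\delta_a, \delta_{a,a'} \in R$ with $\alpha_a \leq \delta_a \leq r_a$ and $d_*(a,a') \leq \delta_{a,a'} \leq r_{a,a'}$ and verifying the triangle inequalities $\delta_a \leq \delta_{a'} \oplus \delta_{a,a'}$. The starting point is the $\cR^*$-triangle inequality $\alpha_a \leq \alpha_{a'} \oplus d_*(a,a')$, together with the $\inf$ formula from Theorem \ref{thm:R*}(b)(iii) which provides arbitrarily tight $\cR$-roundings of $\alpha_{a'} \oplus d_*(a,a')$ from above. The continuity hypothesis on $x \mapsto x \oplus r$ for $r \in R$ is exactly what is needed to control the approximation: it guarantees that as $\delta_{a'} \in R$ is taken close to $\alpha_{a'}$ from above, the value $\delta_{a'} \oplus \delta_{a,a'}$ approaches $\alpha_{a'} \oplus \delta_{a,a'}$ in $R^*$, so that $\delta_a$ (chosen just above $\alpha_a$ and below $r_a$) remains bounded above by it. Iterating this choice (or making it simultaneously, after fixing roundings of the internal distances in $A$) yields the required finite $\cR$-metric extension.

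For the forward direction, I would argue contrapositively: if $x \mapsto x \oplus r$ is discontinuous at some $\alpha \in R^*$, monotonicity forces a gap, say $\sup\{\beta \oplus r : \beta \in R^*,\ \beta < \alpha\} < \alpha \oplus r$, and Dedekind completeness plus density of $R$ in $R^*$ (at the relevant side) produce elements $s, t \in R$ witnessing the jump. I would then exhibit two configurations on three points $\{x,y,z\}$ with $d(x,y) = \alpha$ and $d(y,z) = r$, but with $d(x,z)$ equal to $\alpha \oplus r$ in one and strictly below $s$ in the other; both realize the same quantifier-free $\cL_R$-type in the variables (since no $\cL_R$-formula distinguishes them within the gap), yet the existential formula $\exists y\,(d(x,y) \leq r' \wedge d(y,z) \leq r)$ (for suitable $r' \in R$ just above $\alpha$) separates them, contradicting QE. The main obstacle is the backward direction: making the simultaneous choice of the $\delta_a$ and $\delta_{a,a'}$ uniformly when some $\alpha_a$ lacks an immediate $R$-successor, so that \emph{every} triangle inequality among the $n+1$ new points is verified at once — this is where the full strength of continuity, rather than just the $\inf$ formula, is crucial.
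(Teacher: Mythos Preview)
The paper does not prove this theorem; it is quoted from the prequel \cite[Theorem 7.10]{CoDM1} and no argument is given here. So there is nothing in the present paper to compare your proposal against.

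That said, your overall strategy is the natural one and is consistent with the surrounding discussion in Section~\ref{subsec:GMT}: reduce QE to a one-point extension property for $\cR^*$-configurations over finite sets (via the standard substructure-completeness criterion), then use compactness and universality of $\cU_\cR$ for finite $\cR$-metric spaces to reduce to an $\cR$-rounding problem, with continuity of $x\mapsto x\p r$ being exactly what makes the rounding go through. The paper even remarks, just after the statement, that upper semicontinuity always holds by Theorem~\ref{thm:R*}$(b)(iii)$, so the content of the hypothesis is lower semicontinuity for $r\in R$---which matches your use of it in the backward direction.

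Your forward-direction sketch needs a small correction. The two three-point configurations you describe do \emph{not} share a quantifier-free $\cL_R$-type, since they differ in $d(x,z)$. What you actually want is a single two-point configuration $(x,z)$ together with a quantifier-free $1$-type over it (asserting $d(y,x)=\alpha$ and $d(y,z)=r$) that is finitely satisfiable but not realized. The gap $\sup_{\beta<\alpha}(\beta\p r)<\alpha\p r$ supplies this: choose $d(x,z)$ in $R^*$ strictly between these two values; then for every $r'\in R$ with $\alpha\leq r'$ the formula $\exists y\,(d(x,y)\leq r'\wedge d(y,z)\leq r)$ holds (witnessed by a point at distance $r'$ from $x$ and $r$ from $z$, which is an $\cR$-triangle), yet no $y$ with $d(x,y)=\alpha$ and $d(y,z)=r$ exists, since such a $y$ would force $d(x,z)\leq\alpha\p r$ from above but the lower bound $|d(x,z)\m r|$ would have to lie strictly below $\alpha$, contradicting the gap. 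This exhibits a quantifier-free type over $\{x,z\}$ that is consistent with $\TR{\cR}$ but omitted, which is the failure of QE you need.
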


It is worth elaborating a little more here. In particular, Theorem \ref{thm:R*}$(b)(iii)$ already suggests a certain level of continuity in $\cR^*$. In fact, this property is equivalent to the statement that, for all $\alpha\in R^*$, $x\mapsto x\p\alpha$ is \emph{upper semicontinuous} (see \cite[Remark 4.10]{CoDM1} for details). Therefore, quantifier elimination for $\TR{\cR}$ is precisely equivalent to lower semicontinuity in the case when $\alpha\in R$.

Recall that we define a \textbf{Urysohn monoid} to be a countable distance monoid $\cR$ such that $\TR{\cR}$ has quantifier elimination. In \cite[Section 7]{CoDM1}, we showed that this situation includes most natural examples arising in the literature (e.g. each space in Example \ref{allEX}, except the full generality of (4)). 

To ease notation, we will use $d$ for the $\cR^*$-metric $d_{\MU{\cR}}$ on $\MU{\cR}$ given by Theorem \ref{thm:R*}$(a)$. If we restrict $d$ to $\UR{\cR}$ (considered as an $\cR^*$-subspace of $\MU{\cR}$), then $d$ agrees with the original $\cR$-metric on $\UR{\cR}$, which, in turn, agrees with the $\cR^*$-metric $d_{\UR{\cR}}$ on $\UR{\cR}$ given by Theorem \ref{thm:R*}$(a)$.

\begin{proposition}\label{saturated}\textnormal{\cite{CoDM1}}
If $\cR$ is a Urysohn monoid, then $\U_\cR$ is a $\kappa$-homogeneous and $\kappa^+$-universal $\cR^*$-metric space, where $\kappa$ is the saturation cardinal of $\U_\cR$.
\end{proposition}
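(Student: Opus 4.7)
The $\kappa^+$-universality part is already established in \cite[Proposition 7.1]{CoDM1}, so the plan is to focus entirely on $\kappa$-homogeneity. The overall strategy is to bridge from the $\cR^*$-metric language to the first-order language $\cL_R$ using quantifier elimination, and then invoke strong $\kappa$-homogeneity of the monster model $\MU{\cR}$ in the usual model-theoretic sense.

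Concretely, let $f : A \to B$ be a partial $\cR^*$-isometry between subsets $A, B \subset \MU{\cR}$ with $|A| < \kappa$; the goal is to extend $f$ to an $\cR^*$-isometric automorphism of $\MU{\cR}$. The key observation is that $f$ is automatically a partial $\cL_R$-isomorphism. Indeed, the atomic $\cL_R$-formulas are exactly those of the form $d(x,y) \leq r$ for $r \in R$, and by Theorem \ref{thm:R*}(a), for any $a, a' \in \MU{\cR}$ and $r \in R$, we have $\MU{\cR} \models d(a,a') \leq r$ if and only if $d(a,a') \leq r$ holds in $\cR^*$. Since $f$ preserves $\cR^*$-distances, it preserves each such atomic formula, so $f$ is a partial $\cL_R$-isomorphism. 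Now, because $\cR$ is a Urysohn monoid, Theorem \ref{thm:QE} yields quantifier elimination for $\TR{\cR}$, and hence $f$ is in fact a partial elementary map. Applying strong $\kappa$-homogeneity of the monster $\MU{\cR}$ (which is built into its choice as a monster model), $f$ extends to an $\cL_R$-automorphism $\hat f$ of $\MU{\cR}$. Using Theorem \ref{thm:R*}(a) once more, preservation of the atomic formulas $d(x,y) \leq r$ by $\hat f$ translates to preservation of $\cR^*$-distances, so $\hat f$ is an $\cR^*$-isometric extension of $f$, as required.

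The main (indeed essentially the only non-routine) point here is the appeal to quantifier elimination: without it, an $\cR^*$-isometry would be only a partial quantifier-free elementary map, and there is no saturation-based mechanism to extend such a map to an automorphism. This is precisely why the Urysohn monoid hypothesis is singled out as the appropriate setting in which the metric and first-order notions of homogeneity for $\MU{\cR}$ coincide.
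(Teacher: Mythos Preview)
Your argument is correct and matches precisely the reasoning the paper indicates: the paper does not give a self-contained proof of this proposition (it is cited from \cite{CoDM1}), but the surrounding discussion explains that $\kappa^+$-universality holds for any countable distance monoid via \cite[Proposition 7.1]{CoDM1}, while $\kappa$-homogeneity requires quantifier elimination---exactly the mechanism you use. Your passage from $\cR^*$-isometry to partial elementary map via QE and Theorem~\ref{thm:R*}(a), followed by strong homogeneity of the monster, is the intended argument.
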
 

The results listed above are essentially all we will need for the subsequent work. In light of Theorem \ref{thm:R*}$(b)(i)$, we adopt the convention that, when considering $\emptyset$ as a subset of $R^*$, we let $\inf\emptyset=\sup R^*$ and $\sup\emptyset=0$. The previously discussed upper semicontinuity in $\cR^*$, provided by Theorem \ref{thm:R*}$(b)(iii)$, will be essential. In particular, it gives us a well-behaved notion of absolute value of the difference between elements of $R^*$.

\begin{definition} 
Given a countable distance monoid $\cR$ and $\alpha,\beta\in R^*$, define 
$$
|\alpha\m \beta|:=\inf\{x\in R^*:\alpha\leq\beta\p x\text{ and }\beta\leq\alpha\p x\}.
$$
\end{definition}

We can now give several useful formulations of upper semicontinuity in $\cR^*$.

\begin{proposition}
Let $\cR$ be a countable distance monoid.
\begin{enumerate}[$(i)$]
\item For all $\alpha,\beta\in R^*$, $\alpha\p\beta=\inf\{r\p s:r,s\in R,~\alpha\leq r,~\beta\leq s\}$.
\item For all $\alpha,\beta,\gamma\in R^*$, $|\alpha\m\beta|\leq\gamma$ if and only if $\alpha\leq\beta\p\gamma$ and $\beta\leq\alpha\p\gamma$.
\item If $X,Y\seq R^*$ then $\inf(X\p Y)=\inf X\p\inf Y$.
\end{enumerate}
\end{proposition}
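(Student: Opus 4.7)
Part (i) is literally Theorem \ref{thm:R*}(b)(iii), so I would simply cite it. The plan for (ii) and (iii) is to reduce each statement to (i) and then invoke the immediate-successor property from Theorem \ref{thm:R*}(b)(ii): the latter is what converts an a priori weaker inequality ``$\inf X\leq t$'' into the stronger statement ``some $x\in X$ satisfies $x\leq t$'' whenever $X$ is nonempty, and this is the real engine of the argument.

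For (ii), the direction $(\Leftarrow)$ is immediate from the definition of $|\alpha\m\beta|$, since the hypothesis simply places $\gamma$ into the set $S=\{x\in R^*:\alpha\leq\beta\p x,\,\beta\leq\alpha\p x\}$ whose infimum defines $|\alpha\m\beta|$. I would first observe that $S$ contains $\sup R^*$ (using $\delta\p\sup R^*=\sup R^*$, which follows from monotonicity and $0\p\sup R^*=\sup R^*$), so in particular $S$ is nonempty. For $(\Rightarrow)$, assume $|\alpha\m\beta|\leq\gamma$. By (i), to show $\alpha\leq\beta\p\gamma$ it suffices to show $\alpha\leq r\p s$ for every $r,s\in R$ with $\beta\leq r$ and $\gamma\leq s$; for such $r,s$ the task reduces to producing an $x\in S$ with $x\leq s$, after which monotonicity yields $\alpha\leq\beta\p x\leq r\p s$. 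Since $\inf S\leq\gamma\leq s$, such an $x$ exists unless $\inf S=s$ and every $x\in S$ strictly exceeds $s$; this is precisely the scenario ruled out by Theorem \ref{thm:R*}(b)(ii), which either provides an immediate successor $s^+$ of $s$ in $R^*$ (contradicting $\inf S=s$ via $s^+\leq\inf S$) or forces $s=\max R^*$, in which case ``every $x\in S$ strictly exceeds $s$'' leaves $S$ empty. The symmetric argument yields $\beta\leq\alpha\p\gamma$.

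For (iii), the case that $X$ or $Y$ is empty is handled by the convention $\inf\emptyset=\sup R^*$ together with $\delta\p\sup R^*=\sup R^*$, so I assume $X$ and $Y$ are nonempty. Monotonicity of $\p$ (from translation invariance) immediately gives $\inf X\p\inf Y\leq\inf(X\p Y)$. For the reverse, I apply (i) to each $x\p y$ and swap the two infima to obtain
\[
\inf(X\p Y)=\inf\{t\p u:t\in X^{\uparrow},\,u\in Y^{\uparrow}\},
\]
where $X^{\uparrow}=\{t\in R:\exists x\in X,\,x\leq t\}$ and $Y^{\uparrow}$ is defined analogously. Applying (i) directly to $\inf X\p\inf Y$ gives the same-looking formula but with $X^{\uparrow}$ and $Y^{\uparrow}$ replaced by $\{t\in R:\inf X\leq t\}$ and its analogue. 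The remaining step, and the main obstacle throughout, is to identify $X^{\uparrow}$ with $\{t\in R:\inf X\leq t\}$: the inclusion $\seq$ is trivial, and the reverse fails only when $\inf X=t$ and every $x\in X$ strictly exceeds $t$, which the immediate-successor dichotomy of Theorem \ref{thm:R*}(b)(ii) rules out for nonempty $X$ exactly as in (ii). The analogous identification for $Y^{\uparrow}$ closes the argument.
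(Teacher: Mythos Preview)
Your proposal is correct and follows precisely the approach the paper indicates: part $(i)$ is a restatement of Theorem \ref{thm:R*}$(b)(iii)$, and parts $(ii)$ and $(iii)$ are derived from $(i)$ together with the immediate-successor property of Theorem \ref{thm:R*}$(b)(ii)$ and the existence of a maximal element from Theorem \ref{thm:R*}$(b)(i)$. The paper leaves these details to the reader, and you have filled them in accurately, including the key observation that for $t\in R$ the condition ``$\inf X\leq t$ but every $x\in X$ exceeds $t$'' is ruled out by the successor dichotomy.
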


Note that part $(i)$ simply restates Theorem \ref{thm:R*}$(b)(iii)$. Parts $(ii)$ and $(iii)$ follow from $(i)$ (with the help of Theorem \ref{thm:R*}$(b)$[$(i)$-$(ii)$]). For smoother exposition, we will just say \textbf{by upper semicontinuity} (\textbf{in $\cR^*$}) when applying any part of the previous proposition. Note also that, for any $\alpha\in R^*$, setting $\beta=0$ in part $(i)$ yields $\alpha=\inf\{r\in R:\alpha\leq r\}$. We will say \textbf{by density of $R$} when applying just this property alone.

\section{Notions of Independence}\label{sec:FD}

In this section, we consider various ternary relations on subsets of $\MU{\cR}$, where $\cR$ is a Urysohn monoid. The first such relations are nonforking and nondividing independence. Toward a characterization of these notions, we define the following distance calculations. 

\begin{definition}\label{def:dmax}
Fix a countable distance monoid $\cR$. Given $C\subset\MU{\cR}$ and $b_1,b_2\in\MU{\cR}$, we define
\begin{align*}
d_{\max}(b_1,b_2/C) &= \inf_{c\in C}(d(b_1,c)\p d(c,b_2))\\
d_{\min}(b_1,b_2/C) &= \max\left\{\sup_{c\in C}|d(b_1,c)\m d(c,b_2)|,\textstyle\frac{1}{3}d(b_1,b_2)\right\},
\end{align*}
where, given $\alpha\in R^*$, $\frac{1}{3}\alpha:=\inf\{x\in R^*:\alpha\leq 3x\}$.
\end{definition}

Note that $d_{\max}(b_1,b_2/C)$ is the distance between $b_1$ and $b_2$ in the free amalgamation of $b_1C$ and $b_2C$ over $C$ (equivalently, the largest possible distance between realizations of $\tp(b_1/C)$ and $\tp(b_2/C)$). On the other hand, the interpretation of $d_{\min}$ is not as straightforward, and has to do with the behavior of indiscernible sequences in $\MU{\cR}$ (see Proposition \ref{prop:dmaxmin}). We use these values to give a combinatorial description of $\ind^d$ and $\ind^f$, which, in particular, shows that forking and dividing are the same for complete types in $\TR{\cR}$. 

\begin{theorem}\label{thm:forks}
Suppose $\cR$ is a Urysohn monoid. Given $A,B,C\subset\MU{\cR}$, $A\ind^d_C B$ if and only if $A\ind^f_C B$ if and only if for all $b_1,b_2\in B$, 
$$
d_{\max}(b_1,b_2/AC)=d_{\max}(b_1,b_2/C)\mand d_{\min}(b_1,b_2/AC)=d_{\min}(b_1,b_2/C).
$$
\end{theorem}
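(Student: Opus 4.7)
The strategy is to prove the cycle
$$A \ind^f_C B ~\Rightarrow~ A \ind^d_C B ~\Rightarrow~ (3) ~\Rightarrow~ A \ind^f_C B,$$
where $(3)$ denotes the combinatorial condition on $d_{\max}$ and $d_{\min}$. The first implication is automatic from the general fact that dividing implies forking, so only the two substantive implications require work.

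For $\ind^d \Rightarrow (3)$, I would argue the contrapositive. Suppose $(3)$ fails at some $b_1, b_2 \in B$. Consider first the case $d_{\max}(b_1, b_2/AC) < d_{\max}(b_1, b_2/C)$. By upper semicontinuity in $\cR^*$, there exist $a_0 \in A$ (not in $C$, else immediate contradiction) and $r, s \in R$ with $d(b_1, a_0) \leq r$, $d(a_0, b_2) \leq s$, and $r \p s < d_{\max}(b_1, b_2 / C)$. Using Proposition \ref{saturated} and iterated \Fraisse\ amalgamation over $C$, produce a $C$-indiscernible sequence $(b_1^l, b_2^l)_{l < \omega}$, each pair realizing $\tp(b_1 b_2 / C)$, with cross-distances $d(b_1^l, b_2^k) = d_{\max}(b_1, b_2 / C)$ for $l \neq k$. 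The formula $\vphi(x; b_1, b_2) \equiv d(x, b_1) \leq r \wedge d(x, b_2) \leq s$ is satisfied by $a_0$, but $\{\vphi(x; b_1^l, b_2^l) : l < \omega\}$ is $2$-inconsistent by the triangle inequality, witnessing that $\tp(A/BC)$ divides over $C$. The case $d_{\min}(b_1, b_2/AC) > d_{\min}(b_1, b_2/C)$ is dual, using an indiscernible sequence whose cross-distances realize $d_{\min}(b_1, b_2 / C)$.

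For $(3) \Rightarrow \ind^f$, I would establish the extension property: for every $B' \supseteq BC$ there is $A' \equiv_{BC} A$ satisfying $(3)$ for $A' / B'$ over $C$. The construction is a metric analogue of free amalgamation: extend $\tp_{\mathrm{qf}}(A / BC)$ to a candidate quantifier-free type over $B'$ by declaring $d(a, b') := \inf\{d(a, c) \p d(c, b') : c \in BC\}$ for each $a \in A$ and $b' \in B' \setminus BC$. Upper semicontinuity and Dedekind completeness of $\cR^*$ (Theorem \ref{thm:R*}) guarantee that these assignments satisfy the $\cR^*$-triangle inequality, and then quantifier elimination (Theorem \ref{thm:QE}) together with Proposition \ref{saturated} realize this type in $\MU{\cR}$. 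A direct computation shows that the resulting $A'$ satisfies $(3)$ over $C$ relative to $B'$. Combining this extension property with the already-established $(3) \Rightarrow \ind^d$ (the contrapositive of the previous paragraph) and the standard characterization of nonforking as ``nondividing on every extension,'' one concludes $A \ind^f_C B$.

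The main obstacle will be the $d_{\min}$ portion of each direction. The $d_{\max}$ analysis closely follows familiar free-amalgamation patterns from the random graph and other classical \Fraisse\ limits. But $d_{\min}$ encodes the minimum possible cross-distance in an $\omega$-indiscernible sequence of pairs, and the $\tfrac{1}{3}$ factor in its definition must be shown both necessary (arising from $d(b_1^0, b_2^0) \leq d(b_1^0, b_2^1) \p d(b_2^1, b_1^2) \p d(b_1^2, b_2^0) = 3\alpha$ in a three-pair cycle) and realizable (to actually produce the required indiscernible sequences). Handling these minimum-distance constraints simultaneously with the maximum-distance ones, and verifying consistency of all the $\cR^*$-triangle inequalities in the extended configuration, is precisely where upper semicontinuity and the Urysohn-monoid hypothesis come into play.
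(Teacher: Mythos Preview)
Your overall architecture matches the paper's: first identify $\ind^d$ with condition $(3)$, then prove extension for $\ind^d$ to conclude $\ind^d=\ind^f$. But there are two genuine gaps.

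First, a logical slip: you never establish $(3)\Rightarrow\ind^d$. Your first paragraph proves, by contrapositive, that $\neg(3)$ implies dividing; that is exactly $\ind^d\Rightarrow(3)$, and its contrapositive is the same statement, not the converse. The missing direction is not free: one must show that for any $C$-indiscernible sequence in $\tp(B/C)$, the type of $A$ over the union is consistent. The paper handles this through Proposition~\ref{prop:dmaxmin}, which pins down precisely which cross-distances can occur in such sequences, and then verifies the relevant triangle inequalities against the $d_{\min}$ and $d_{\max}$ bounds.

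Second, and more seriously, your extension construction via free amalgamation over $BC$ is incorrect: setting $d(a',b')=d_{\max}(a',b'/BC)$ can break the $d_{\min}$ condition. Concretely, in $\cQ_1$ take $C=\emptyset$, $B=\{b\}$, $A=\{a\}$ with $d(a,b)=\tfrac12$, and a new $b_*$ with $d(b,b_*)=\tfrac{1}{10}$. Condition $(3)$ for $A,B$ over $\emptyset$ holds trivially. Free amalgamation over $B$ gives $d(a',b_*)=\tfrac12\p\tfrac{1}{10}=\tfrac35$, whence $|d(a',b)\m d(a',b_*)|=\tfrac{1}{10}$; but $d_{\min}(b,b_*/\emptyset)=\tfrac13 d(b,b_*)=\tfrac{1}{30}$, so $d_{\min}(b,b_*/A'C)=\tfrac{1}{10}>\tfrac{1}{30}=d_{\min}(b,b_*/C)$ and $(3)$ fails for the extension. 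The paper instead adds one point $b_*$ at a time and assigns
\[
d(a',b_*)=U(a):=\inf_{b\in BC}\bigl(d(a,b)\p d_{\min}(b_*,b/C)\bigr),
\]
which is engineered to sit low enough to respect every $d_{\min}$ constraint while still satisfying the triangle inequalities and the $d_{\max}$ constraints (Proposition~\ref{prop:tech}). In the example above this yields $U(a)=\tfrac12\p\tfrac{1}{30}=\tfrac{8}{15}$, and then $|d(a',b)\m d(a',b_*)|=\tfrac{1}{30}$ as required. You were right in your last paragraph that the $d_{\min}$ side is where the difficulty lies; free amalgamation simply does not resolve it.
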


This characterization is identical to the characterization, given in \cite{CoTe}, of forking and dividing for the complete Urysohn sphere as a metric structure in continuous logic. Moreover, the proof of Theorem \ref{thm:forks} is very similar to \cite{CoTe}. The work lies in showing that, with only minor modifications, the methods in \cite{CoTe} can be reformulated and applied in discrete logic to $\TR{\cR}$. We give a brief outline of the proof, and the necessary modifications, in \ref{app:FD}.

The analysis of stability and simplicity will use three more ternary relations. 

\begin{definition}
Let $\cR$ be a countable distance monoid. Given $a\in\MU{\cR}$ and $C\subset\MU{\cR}$, define $d(a,C)=\inf\{d(a,c):c\in C\}$. Given $A,B,C\subset\MU{\cR}$, define
\begin{alignat*}{3}
&\textstyle A\ind^{\dist}_C B &&\miff\text{$d(a,BC)=d(a,C)$ for all $a\in A$;}\\
&\textstyle A\ind^\otimes_C B &&\miff \text{$d(a,b)=d_{\max}(a,b/C)$ for all $a\in A$, $b\in B$;}\\
&\textstyle A\ind^{d_{\max}}_C B &&\miff \text{$d_{\max}(b_1,b_2/AC)=d_{\max}(b_1,b_2/C)$ for all $b_1,b_2\in B$.}
\end{alignat*}
\end{definition}

The relation $\ind^{\dist}$ has obvious significance as a notion of independence in metric spaces; and $\ind^{d_{\max}}$ is a reasonable simplification of the characterization of $\ind^f$ given by Theorem \ref{thm:forks}, which will be important in the analysis of when $\TR{\cR}$ is simple. Finally, the relation $A\ind^\otimes_C B$ asserts that, as $\cR^*$-metric spaces, $ABC$ is isometric to the free amalgamation of $AC$ and $BC$ over $C$. This notion has been previously used in work of Tent and Ziegler \cite{TeZiSIR} on \textit{stationary independence relations}, which are ternary relations (defined on some structure, which, in our case, is the monster model $\M$ of a theory $T$) satisfying invariance, monotonicity, symmetry, (full) transitivity, (full) existence, and stationarity (over all sets). We refer the reader to \cite{TeZiSIR} for details.

\begin{proposition}\label{prop:SIR}
$~$
\begin{enumerate}[$(a)$]
\item If $\M$ is the monster model of a complete theory $T$, and $\ind$ is a stationary independence relation on $\M$, then $\ind$ implies $\ind^f$.
\item Suppose $\cR$ is a Urysohn monoid.
\begin{enumerate}[$(i)$]
\item $\ind^f$ implies $\ind^{d_{\max}}$. 
\item $\ind^\otimes$ is a stationary independence relation on $\MU{\cR}$.
\item $\ind^\otimes$ implies $\ind^{\dist}$.
\item $\ind^{\dist}$ satisfies local character.
\end{enumerate}
\end{enumerate}
\end{proposition}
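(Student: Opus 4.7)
The plan is to prove part (a) abstractly from the axioms of a stationary independence relation, and then to verify each sub-part of (b) using the concrete distance-theoretic descriptions of the four relations together with Theorem \ref{thm:forks}, Theorem \ref{thm:R*}, Proposition \ref{saturated}, and upper semicontinuity in $\cR^*$.

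For part (a), the strategy is to deduce nonforking from two intermediate facts. First, $\ind$ satisfies extension: given $A \ind_C B$ and $B' \supseteq B$, apply existence to obtain $A^* \equiv_C A$ with $A^* \ind_C B'$; monotonicity gives $A^* \ind_C B$, and stationarity then forces $A^* \equiv_{BC} A$. Second, $\ind$ implies nondividing: given a $C$-indiscernible $(B_i)_{i<\omega}$ with $B_0 = B$, use existence to find $A' \equiv_C A$ with $A' \ind_C \bigcup_i B_i$. For each $i$, choose $\sigma_i \in \Aut(\MU{\cR}/C)$ with $\sigma_i(B) = B_i$, and use invariance to obtain $\sigma_i(A) \ind_C B_i$; stationarity applied to the pair $A' \equiv_C \sigma_i(A)$ (both $\ind$-independent from $B_i$ over $C$) gives $\tp(A' B_i / C) = \tp(\sigma_i(A) B_i / C) = \tp(AB/C)$. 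Thus $A'$ realizes $\bigcup_i p(\xbar, B_i)$ where $p(\xbar, \ybar) = \tp(AB/C)$, establishing nondividing. Combining nondividing with extension yields nonforking in the standard way.

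Parts (b)(i) and (b)(iii) are short: (i) is immediate from Theorem \ref{thm:forks}, and for (iii), assuming $A \ind^\otimes_C B$, for $a \in A$ and $b \in B$ we have $d(a,b) = \inf_{c \in C}(d(a,c) \p d(c,b)) \geq d(a,c)$ for each $c \in C$ (since $0$ is least and $\leq$ is translation-invariant), so $d(a,B) \geq d(a,C)$ and hence $d(a,BC) = d(a,C)$. Part (b)(ii) requires checking the six axioms. Invariance, monotonicity, and symmetry are direct. Stationarity follows from quantifier elimination: types over $BC$ are determined by $\cR^*$-distances to $BC$, and the hypothesis $\ind^\otimes$ pins each $d(a,b)$ down to $d_{\max}(a,b/C)$. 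Existence follows by forming the free amalgamation of $AC$ and $BC$ over $C$ as an $\cR^*$-metric space (well-defined by the preceding work) and embedding into $\MU{\cR}$ via Proposition \ref{saturated}. The key axiom is transitivity: assuming $A \ind^\otimes_C B$ and $A \ind^\otimes_{CB} B'$, for $a \in A$ and $b' \in B'$ one uses upper semicontinuity to compute $\inf_{b \in B}(d(a,b) \p d(b,b')) = \inf_{b \in B,\, c \in C}(d(a,c) \p d(c,b) \p d(b,b'))$, and the triangle inequality $d(c,b) \p d(b,b') \geq d(c,b')$ shows this infimum is $\geq d_{\max}(a,b'/C)$, which forces $d_{\max}(a,b'/CB) = d_{\max}(a,b'/C)$ and hence $d(a,b') = d_{\max}(a,b'/C)$.

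For part (b)(iv), given a tuple $a$ and set $B$, I would construct countable $B_{0,i} \subseteq B$ for each coordinate $a_i$ with $d(a_i, B_{0,i}) = d(a_i, B) =: \alpha_i$. If $\alpha_i \in R$, the infimum is attained: otherwise its immediate successor $\alpha_i^+ \in R^*$, provided by Theorem \ref{thm:R*}$(b)(ii)$, would be a strictly larger lower bound for $\{d(a_i, b) : b \in B\}$, contradicting $\alpha_i = \inf$; a single element then suffices. If $\alpha_i \notin R$, for each $r \in R$ with $r > \alpha_i$ pick $b_r \in B$ with $d(a_i, b_r) < r$, and set $B_{0,i} = \{b_r : r \in R,\, r > \alpha_i\}$, which is countable since $R$ is; density of $R$ in $R^*$ gives $\inf\{r \in R : r > \alpha_i\} = \alpha_i$, so $d(a_i, B_{0,i}) \leq \alpha_i$, and the reverse inequality is trivial from $B_{0,i} \subseteq B$. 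The union $B_0 = \bigcup_i B_{0,i}$ then satisfies $|B_0| \leq |T| + \ell(a)$ and $a \ind^{\dist}_{B_0} B$. The principal obstacle is the transitivity computation in (b)(ii), where a careful interplay of upper semicontinuity in $\cR^*$ with the fact that $d_{\max}$ is antitone in the base is required; the non-attained infimum case of (b)(iv) likewise needs careful handling via density of $R$ and the existence of immediate successors in $R^*$.
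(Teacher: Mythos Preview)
Your proposal is correct and follows essentially the same approach as the paper, which in fact leaves parts $(a)$ and $(b)(ii)$ as exercises and gives only the short arguments for $(b)(i)$, $(b)(iii)$, $(b)(iv)$ that you reproduce in more detail. One small slip: in $(b)(iii)$ you write $d(a,b)=\inf_{c\in C}(d(a,c)\p d(c,b))\geq d(a,c)$ ``for each $c\in C$'', which is not literally true; what you need (and what the paper writes) is $d(a,b)=d_{\max}(a,b/C)\geq \inf_{c\in C} d(a,c)=d(a,C)$, which follows since each term $d(a,c)\p d(c,b)\geq d(a,c)$.
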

\begin{proof}
Part $(a)$ is a nice exercise in the style of \cite{Adgeo}. See also \cite[Theorem 4.1]{CoTe}.

Part $(b)$. Claim $(i)$ is immediate from Theorem \ref{thm:forks}.

Claim $(ii)$ is left as an exercise. Transitivity and existence require upper semicontinuity in $\cR^*$. Existence and stationarity require quantifier elimination.

For claim $(iii)$, fix $A,B,C\subset\MU{\cR}$, with $A\ind^{\otimes}_C B$. Given $a\in A$ and $b\in B$, we have
$d(a,C)=\inf_{c\in C}d(a,c)\leq d_{\max}(a,b/C)=d(a,b)$. 
Therefore $d(a,BC)=d(a,C)$, and so $A\ind^{\dist}_C B$.

For claim $(iv)$, fix $A,B\subset\MU{\cR}$. We may assume $A,B\neq\emptyset$. We will show that for all $a\in A$, there is $C_a\seq B$ such that $|C_a|\leq\aleph_0$ and $a\ind^{\dist}_{C_a}B$. Then, setting $C=\bigcup_{a\in A}C_a$, we will have $|C|\leq|A|+\aleph_0$ and $A\ind^{\dist}_C B$. 

Fix $a\in A$. If there is some $b\in B$ such that $d(a,b)=d(a,B)$ then set $C_a=\{b\}$. Otherwise, define $X=\{r\in R:d(a,B)\leq r\}$. If $r\in X$ then, by density of $R$, there is some $b_r\in B$ such that $d(a,b_r)\leq r$. Set $C_a=\{b_r:r\in X\}$. For any $b\in B$, by assumption and density of $R$, there is some $r\in X$ such that $r<d(a,b)$. Then $b_r\in C_a$ and $d(a,b_r)\leq r<d(a,b)$, as desired.
\end{proof}

\section{Urysohn Spaces of Low Complexity}\label{sec:low}

\subsection{Stability}\label{sec:stable}

In this section, we characterize the Urysohn monoids $\cR$ for which $\TR{\cR}$ is stable. We recall the following definition from \cite{CoDM1}.

\begin{definition}
A distance monoid $\cR=(R,\p,\leq,0)$ is \textbf{ultrametric} if, for all $r,s\in R$, $r\p s=\max\{r,s\}$.
\end{definition}

It is easy to verify that countable ultrametric monoids are Urysohn (see \cite[Proposition 7.3]{CoDM1}). The goal of this section is to show that, given a Urysohn monoid $\cR$, $\TR{\cR}$ is stable if and only if $\cR$ is ultrametric. The heart of this fact lies in the observation that ultrametric spaces correspond to refining equivalence relations. In particular, if $(A,d)$ is an ultrametric space, then for any distance $r$, $d(x,y)\leq r$ is an equivalence relation on $A$. Therefore, the result that ultrametric monoids yield stable Urysohn spaces recovers classical results on theories of refining equivalence relations (see \cite[Section III.4]{Babook}). 

We begin with the following easy simplification of $d_{\max}$ for ultrametric Urysohn spaces. Note that if $\cR$ is a countable ultrametric monoid then, by upper semicontinuity, $\cR^*$ is ultrametric as well. 

\begin{proposition}\label{prop:ultdmax}
Let $\cR$ be a countable ultrametric monoid. Fix $C\subset\MU{\cR}$ and $b_1,b_2\in\MU{\cR}$. 
\begin{enumerate}[$(a)$]
\item If $d(b_1,c)\neq d(b_2,c)$ for some $c\in C$ then $d_{\max}(b_1,b_2/C)=d(b_1,b_2)$.
\item If $d(b_1,c)=d(b_2,c)$ for all $c\in C$ then $d_{\max}(b_1,b_2/C)=d(b_1,C)$.
\end{enumerate}
\end{proposition}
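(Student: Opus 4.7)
The plan is to first reduce both parts to a clean formula for $d_{\max}$ by observing that $\cR^*$ inherits the ultrametric structure from $\cR$. Indeed, for $\alpha,\beta\in R^*$, upper semicontinuity gives $\alpha\p\beta=\inf\{r\p s:r,s\in R,\ \alpha\leq r,\ \beta\leq s\}=\inf\{\max\{r,s\}:\alpha\leq r,\ \beta\leq s\}$, and by density of $R$ this equals $\max\{\alpha,\beta\}$. Consequently, $d_{\max}(b_1,b_2/C)=\inf_{c\in C}\max\{d(b_1,c),d(b_2,c)\}$. The triangle inequality in $\cR^*$ immediately gives the uniform lower bound $d(b_1,b_2)\leq\max\{d(b_1,c),d(b_2,c)\}$ for every $c\in C$, hence $d(b_1,b_2)\leq d_{\max}(b_1,b_2/C)$.

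For part $(a)$, I would use the standard ultrametric isosceles fact, now lifted to $\cR^*$: given a witness $c_0\in C$ with $d(b_1,c_0)\neq d(b_2,c_0)$, assume without loss of generality that $d(b_1,c_0)<d(b_2,c_0)$. Applying the ultrametric triangle inequality in $\cR^*$ to $b_2,b_1,c_0$, we have $d(b_2,c_0)\leq\max\{d(b_1,b_2),d(b_1,c_0)\}$; if $d(b_1,b_2)<d(b_2,c_0)$ were also true, the right-hand side would be strictly smaller than $d(b_2,c_0)$, a contradiction. Hence $d(b_1,b_2)=d(b_2,c_0)=\max\{d(b_1,c_0),d(b_2,c_0)\}$, which yields $d_{\max}(b_1,b_2/C)\leq\max\{d(b_1,c_0),d(b_2,c_0)\}=d(b_1,b_2)$. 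Combined with the uniform lower bound above, this gives the claimed equality.

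For part $(b)$, the hypothesis forces $\max\{d(b_1,c),d(b_2,c)\}=d(b_1,c)$ for every $c\in C$, so the formula for $d_{\max}$ collapses to $\inf_{c\in C}d(b_1,c)=d(b_1,C)$. There is essentially no obstacle in this proof; the only step requiring any care is the verification that $\cR^*$ is ultrametric, which rests on upper semicontinuity together with density of $R$. Once that reduction is in place, both parts follow from the elementary ultrametric isosceles principle and a direct evaluation of the infimum.
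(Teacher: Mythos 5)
Your proof is correct and follows exactly the route the paper intends: the paper states the proposition without proof, having just noted that $\cR^*$ is ultrametric by upper semicontinuity, after which $d_{\max}(b_1,b_2/C)=\inf_{c\in C}\max\{d(b_1,c),d(b_2,c)\}$ and the ultrametric isosceles principle give both parts. Your verification that $\alpha\p\beta=\max\{\alpha,\beta\}$ in $\cR^*$ via upper semicontinuity and density of $R$ is precisely the justification the paper leaves implicit.
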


Next, we observe interaction between ultrametric monoids and the previously discussed notions of independence.

\begin{lemma}\label{prop:stable}
Suppose $\cR$ is a countable distance monoid.
\begin{enumerate}[$(a)$]
\item If $\cR$ is ultrametric then $\ind^\otimes$ coincides with $\ind^{\dist}$ on $\U_\cR$.
\item If $\ind^{\dist}$ is symmetric on $\U_\cR$ then $\cR$ is ultrametric.
\end{enumerate}
\end{lemma}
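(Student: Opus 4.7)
For part $(a)$, the nontrivial direction is that $\ind^{\dist}$ implies $\ind^{\otimes}$ in the ultrametric setting (the reverse direction is Proposition \ref{prop:SIR}$(b)(iii)$). So I would fix $A \ind^{\dist}_C B$ in $\U_\cR$, together with $a\in A$ and $b\in B$, and aim to show $d(a,b)=d_{\max}(a,b/C)$. The strategy is to split on the dichotomy afforded by Proposition \ref{prop:ultdmax}. If $d(a,c)\neq d(b,c)$ for some $c\in C$, then $d_{\max}(a,b/C)=d(a,b)$ and we are done. Otherwise $d(a,c)=d(b,c)$ for every $c\in C$, and Proposition \ref{prop:ultdmax} gives $d_{\max}(a,b/C)=d(a,C)$. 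In this second case, the ultrametric inequality yields $d(a,b)\leq\max\{d(a,c),d(c,b)\}=d(a,c)$ for every $c\in C$, hence $d(a,b)\leq d(a,C)$; and the hypothesis $d(a,BC)=d(a,C)$ together with $b\in BC$ gives the reverse inequality $d(a,b)\geq d(a,BC)=d(a,C)$. So $d(a,b)=d(a,C)=d_{\max}(a,b/C)$, as needed. This part is essentially a careful bookkeeping step, not an obstacle.

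For part $(b)$, I would argue by contrapositive: assume $\cR$ is not ultrametric and produce a failure of symmetry for $\ind^{\dist}$ in $\U_\cR$. The preliminary step is to extract a single element $s\in R$ with $s\p s>s$. Indeed, if every element of $R$ were idempotent, then for any $r\leq s$ one would have $s=0\p s\leq r\p s\leq s\p s=s$, forcing $r\p s=s=\max\{r,s\}$ and contradicting the non-ultrametricity of $\cR$. Set $t:=s\p s>s$.

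With such $s$ and $t$ in hand, I would build a three-point $\cR$-metric space $\{a,b,c\}$ with $d(a,b)=d(b,c)=s$ and $d(a,c)=t$; the triangle inequalities hold by construction, so this embeds into $\U_\cR$ by universality. Now I compute: $d(b,\{a,c\})=\min(s,s)=s=d(b,\{a\})$, so $\{b\}\ind^{\dist}_{\{a\}}\{c\}$ holds. On the other hand, $d(c,\{a,b\})=\min(t,s)=s$ while $d(c,\{a\})=t$, and since $s<t$ these are distinct, so $\{c\}\ind^{\dist}_{\{a\}}\{b\}$ fails. This witnesses non-symmetry of $\ind^{\dist}$, completing the contrapositive.

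The main (modest) obstacle is the combinatorial observation at the start of $(b)$ that non-ultrametricity can always be localized to a single element with $s\p s>s$; the remaining work is a direct computation inside a concrete three-point configuration in $\U_\cR$.
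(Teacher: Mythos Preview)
Your proposal is correct and follows essentially the same approach as the paper. Part $(a)$ is virtually identical (the paper obtains $d(a,b)\leq d(a,C)$ by a quick contradiction using the ultrametric isosceles property rather than the direct inequality you wrote, but this is cosmetic). In part $(b)$ the paper argues directly rather than by contrapositive: for arbitrary $r,s\in R$ it builds the triangle with sides $\min\{r,s\},\max\{r,s\},r\p s$ and uses symmetry to force $r\p s=\max\{r,s\}$, thereby avoiding your preliminary reduction to a single non-idempotent $s$; your isoceles triangle is just the special case $r=s$ of theirs.
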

\begin{proof}
Part $(a)$. By Proposition \ref{prop:SIR}$(b)(iii)$, we only need to show $\ind^{\dist}$ implies $\ind^{\otimes}$. Fix $A,B,C\subset\U_\cR$, with $A\ind^{\dist}_C B$. We want $d(a,b)=d_{\max}(a,b/C)$ for all $a\in A$ and $b\in B$. By Proposition \ref{prop:ultdmax}, we may assume $d(a,c)=d(b,c)$ for all $c\in C$, and prove $d(a,b)=d(a,C)$. Note that $A\ind^{\dist}_C B$ implies $d(a,BC)=d(a,C)$, and so we have $d(a,b)\geq d(a,C)$. On the other hand, if $d(a,b)>d(a,C)$ then there is $c\in C$ such that $d(a,b)>d(a,c)$. But then $d(b,c)=\max\{d(a,b),d(a,c)\}=d(a,b)>d(a,c)$, which contradicts our assumptions.

Part $(b)$. Fix $r,s\in R$. There are $a,b,c\in\MU{\cR}$ such that $d(a,b)=\max\{r,s\}$, $d(a,c)=\min\{r,s\}$, and $d(b,c)=r\p s$. Then $d(a,b)\geq  d(a,c)$, and so $a\ind^{\dist}_c b$. By symmetry, we have $b\ind^{\dist}_c a$, which means $\max\{r,s\}=d(a,b)\geq d(b,c)=r\p s$. Therefore $r\p s=\max\{r,s\}$, and we have shown that $\cR$ is ultrametric. 
\end{proof}

\begin{theorem}\label{thm:stable}
Given a Urysohn monoid $\cR$, the following are equivalent.
\begin{enumerate}[$(i)$]
\item $\TR{\cR}$ is stable.
\item $\ind^f$ coincides with $\ind^{\dist}$.
\item $\ind^f$ coincides with $\ind^{\otimes}$.
\item $\cR$ is ultrametric, i.e. for all $r,s\in R$, if $r\leq s$ then $r\p s=s$.
\end{enumerate}
\end{theorem}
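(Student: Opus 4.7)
The plan is to establish the cycle $(iv) \Rightarrow (iii) \Rightarrow (ii) \Rightarrow (i) \Rightarrow (iv)$, using Theorem~\ref{thm:forks}, Lemma~\ref{prop:stable}, and Proposition~\ref{prop:SIR} as the main tools.

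For $(iv) \Rightarrow (iii)$: since $\ind^\otimes$ is a stationary independence relation (Proposition~\ref{prop:SIR}(b)(ii)), Proposition~\ref{prop:SIR}(a) gives $\ind^\otimes \subseteq \ind^f$. For the reverse direction I would fix $A \ind^f_C B$ and $a \in A, b \in B$, and use Proposition~\ref{prop:ultdmax} to reduce to the case where $d(a,c) = d(b,c)$ for all $c \in C$, where $d_{\max}(a, b/C) = d(a, C)$. Then Theorem~\ref{thm:forks} applied to $b_1 = b_2 = b$, combined with idempotence $\alpha \p \alpha = \alpha$ in the ultrametric $\cR^*$, simplifies the $d_{\max}$ equality to $d(b, AC) = d(b, C)$. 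The ultrametric triangle inequality forces $d(a,b) \leq d(a,c) = d(b,c)$ for each $c \in C$, so $d(a,b) \leq d(a, C)$; combined with $d(a,b) \geq d(b, AC) = d(a, C)$, we obtain $d(a,b) = d_{\max}(a,b/C)$, as desired.

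For $(iii) \Rightarrow (ii) \Rightarrow (i)$: I would first establish $(iii) \Rightarrow (iv)$ by contradiction. If $\cR$ is non-ultrametric, pick $r \leq s$ with $r \p s > s$ and realize $a, b, c \in \MU{\cR}$ with $d(a,c) = r$, $d(b,c) = s$, $d(a,b) = s$. A short computation via Theorem~\ref{thm:forks} confirms $a \ind^f_c b$ (the key equality is $d_{\max}(b,b/ac) = s \p s = d_{\max}(b,b/c)$), while $d(a,b) = s < r \p s = d_{\max}(a,b/c)$ shows $a \not\ind^\otimes_c b$—contradicting $(iii)$. Lemma~\ref{prop:stable}(a) then converts $(iii)$ into $(ii)$. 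For $(ii) \Rightarrow (i)$: local character of $\ind^{\dist}$ (Proposition~\ref{prop:SIR}(b)(iv)) transfers to $\ind^f$, giving simplicity (Fact~\ref{fact:low}(a)) and hence symmetry of $\ind^f = \ind^{\dist}$; Lemma~\ref{prop:stable}(b) yields $\cR$ ultrametric, and applying $(iv) \Rightarrow (iii)$ identifies $\ind^f$ with the stationary relation $\ind^\otimes$, so Fact~\ref{fact:low}(b) yields stability.

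The main obstacle is $(i) \Rightarrow (iv)$, which requires genuinely using stability rather than just simplicity---witness $\cR_2$, whose Urysohn space is the simple unstable random graph. Given $T$ stable and $\cR$ non-ultrametric, translation invariance yields some $r \in R$ with $r \p r > r$. I would fix a model $M$ containing an element $c$, and realize $a \in \MU{\cR}$ with $\{a\} \ind^\otimes_{\{c\}} M$ and $d(a,c) = r$, so that $d(a,m) = r \p d(c,m)$ for each $m \in M$ and consequently $d_{\max}(a,a/M) = r \p r$. Next, produce two realizations $b_1, b_2$ each satisfying $\{b_i\} \ind^\otimes_{\{c\}} M$ with $d(b_i, c) = r$---forcing $\tp(b_1/M) = \tp(b_2/M)$---but arranged so that $d(b_1,a) = r$ and $d(b_2,a) = r \p r$; both values lie in the admissible range $[0, r \p r]$ prescribed by the triangle inequalities, so both configurations are realizable in $\MU{\cR}$. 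By Theorem~\ref{thm:forks} applied to $\{a\}$ on the right, the condition $b_i \ind^f_M a$ reduces to $d(b_i,a) \p d(b_i,a) \geq r \p r$, which holds for both choices. Thus $\tp(b_1/M) = \tp(b_2/M)$ and $b_1, b_2 \ind^f_M a$, yet $d(b_1,a) \neq d(b_2,a)$---contradicting stationarity of $\ind^f$ over the model $M$ and completing the equivalence.
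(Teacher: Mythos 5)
Your proof is correct, and most of it runs on the same rails as the paper: the same use of Theorem \ref{thm:forks} with $b_1=b_2$ plus Proposition \ref{prop:ultdmax} for $(iv)\Rightarrow(iii)$, Lemma \ref{prop:stable}$(a)$ to pass to $\ind^{\dist}$, and the combination of local character of $\ind^{\dist}$, Fact \ref{fact:low}$(a)$, and Lemma \ref{prop:stable}$(b)$ to recover ultrametricity from $(ii)$ (you arrange these as a single cycle with auxiliary implications $(iii)\Rightarrow(iv)$ and $(ii)\Rightarrow(iv)$, whereas the paper proves $(iv)\Rightarrow(iii)\Rightarrow(i)\Rightarrow(iv)$ together with $(iv)\Leftrightarrow(ii)$; this is only a reorganization). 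The genuine divergence is $(i)\Rightarrow(iv)$: the paper witnesses instability directly and elementarily, showing the formula $\vphi(x_1,x_2,y_1,y_2):=d(x_1,y_2)\leq r$ has the order property on an explicit sequence of pairs with distances $r$ and $r\p r$, without invoking Theorem \ref{thm:forks} or any independence machinery. You instead refute stationarity of $\ind^f$ over a model: two realizations of the same type over $M$ at distances $r$ and $r\p r$ from $a$, both nonforking over $M$ by Theorem \ref{thm:forks} (the only condition being $2d(b_i,a)\geq 2r$), contradicting Fact \ref{fact:low}$(b)$. Your route is sound (the triangle checks and the computation $d_{\max}(a,a/M)=2r$ go through by monotonicity of $\p$), and it has the merit of localizing exactly which consequence of stability fails; the paper's version is cheaper, needing neither the forking characterization nor a model as base, and produces a concrete unstable formula. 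Similarly, your $(iii)\Rightarrow(iv)$ counterexample (the triangle $r,s,s$ with $a\ind^f_c b$ but $a\nind^\otimes_c b$) is a small extra argument the paper avoids by routing $(iii)\Rightarrow(i)\Rightarrow(iv)$; it is correct and arguably more transparent about why $\ind^f$ and $\ind^\otimes$ can differ.
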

\begin{proof}
$(iv)\Rightarrow(iii)$: Suppose $\cR$ is ultrametric. By Proposition \ref{prop:SIR}$(b)$, it suffices to show $\ind^f$ implies $\ind^\otimes$. So suppose $A\ind^f_C B$ and fix $a\in A$, $b\in B$. We want to show $d(a,b)=d_{\max}(a,b/C)$. By Theorem \ref{thm:forks}, $A\ind^f_C B$ implies $d(b,C)\leq d(a,b)$. So the desired result follows from Proposition \ref{prop:ultdmax}.

$(iii)\Rightarrow(i)$: If $\ind^f$ coincides with $\ind^\otimes$ then $\ind^f$ satisfies symmetry and stationarity by Proposition \ref{prop:SIR}$(b)(ii)$. Therefore $\TR{\cR}$ is stable by Fact \ref{fact:low}$(b)$. 

$(i)\Rightarrow(iv)$: Suppose $\cR$ is not ultrametric. Then we may fix $r\in R$ such that $r<r\p r$. We show that the formula $\vphi(x_1,x_2,y_1,y_2):= d(x_1,y_2)\leq r$ has the order property. We may define a sequence $(a^l_1,a^l_2)_{l<\omega}$ in $\U_\cR$ such that, given $l<m$, $d(a^m_1,a^l_2)=r\p r$, and all other nonzero distances are $r$. Then $\vphi(a^l_1,a^l_2,a^m_1,a^m_2)$ if and only if $l\leq m$.

$(iv)\Rightarrow(ii)$: Combine $(iv)\Rightarrow(iii)$ with Lemma \ref{prop:stable}$(a)$. 

$(ii)\Rightarrow(iv)$: If $\ind^f$ coincides with $\ind^{\dist}$ then $\ind^{\dist}$ is symmetric by Fact \ref{fact:low}$(a)$ and Proposition \ref{prop:SIR}$(b)(iv)$, and so $\cR$ is ultrametric by Lemma \ref{prop:stable}$(b)$.
\end{proof}

\begin{remark}\label{rem:NIP}
Suppose $\cR$ is a Urysohn monoid. As an exercise, the reader may show that if  $r\in R$ is such that $r<r\p r$, then the formula $d(x,y)\leq r$ in fact has the \textit{independence property}, $\IP$, (see \cite[II \S 4]{Shbook}). Consequently, if $\Th(\cU_\cR)$ is $\NIP$ then it is stable. A classical result of Shelah is that a complete theory is stable if and only if it is $\NIP$ and $\NSOP$, where $\SOP$ is the \textit{strict order property} (see \cite[Theorem II.4.7]{Shbook}). It will follow from the results of Section \ref{sec:NFSOP} that, if $\cR$ is a Urysohn monoid, then $\Th(\cU_\cR)$ is always $\NSOP$. This gives a less direct proof of the conclusion that $\NIP$ implies stability for $\Th(\cU_\cR)$. 
\end{remark}

Looking back at this characterization of stability, it is worth pointing out that $(i)$, $(ii)$, and $(iii)$ could all be obtained from $(iv)$ by showing that, when $\cR$ is ultrametric, both $\ind^{\dist}$ and $\ind^\otimes$ satisfy the axioms characterizing nonforking in stable theories (c.f. \cite[Theorem 2.6.1, Remark 2.9.6]{Wabook}). In this way, the above theorem could be entirely obtained without using the general characterization of nonforking given by Theorem \ref{thm:forks}.

\subsection{Simplicity}\label{sec:simple}

Our next goal is an analogous characterization of simplicity for $\TR{\cR}$, when $\cR$ is a Urysohn monoid. We will obtain similar behavior in the sense that simplicity of $\TR{\cR}$ is detected by nicer characterizations of forking, as well as the dynamics of addition in $\cR$. However, unlike the stable case, the characterization of forking and dividing, given by Theorem \ref{thm:forks}, will be crucial for our results.

We begin by defining the preorder on an arbitrary distance monoid given by archimedean equivalence.

\begin{definition}\label{def:archi}
Suppose $\cR$ is a distance monoid.
\begin{enumerate}
\item Define the relation $\preceq$ on $R$ such that $r\preceq s$ if and only if $r\leq ns$ for some $n>0$.
\item Define the relation $\asymp$ on $R$ such that $r\asymp s$ if and only if $r\preceq s$ and $s\preceq r$.
\item Given $r,s\in R$, write $r\prec s$ if $s\not\preceq r$, i.e. if $nr<s$ for all $n>0$.
\end{enumerate}
\end{definition}

Throughout this section, we will use the fact that, given a countable distance monoid $\cR$, if $b\in\MU{\cR}$ and $C\subset\MU{\cR}$ then, by upper semicontinuity in $\cR^*$, $d_{\max}(b,b/C)=2d(b,C)$. We also note the following useful inequality.

\begin{lemma}\label{lem:dmax}
Given a countable distance monoid $\cR$, if $C\subset\MU{\cR}$ and $b_1,b_2,b_3\in\MU{\cR}$ then $d_{\max}(b_1,b_3/C)\leq d_{\max}(b_1,b_2/C)\p d_{\min}(b_2,b_3/C)$.
\end{lemma}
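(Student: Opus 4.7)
The plan is to derive the inequality by applying the triangle inequality pointwise inside the infimum that defines $d_{\max}(b_1,b_3/C)$, then pulling the term $d_{\min}(b_2,b_3/C)$ outside via upper semicontinuity.

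First I would fix an arbitrary $c \in C$. By the characterization in part $(ii)$ of the upper-semicontinuity proposition applied to $\alpha = d(c,b_3)$, $\beta = d(c,b_2)$, and $\gamma = |d(c,b_2) \m d(c,b_3)|$, we automatically get $d(c,b_3) \leq d(c,b_2) \p |d(c,b_2) \m d(c,b_3)|$. Translation-invariance then yields
$$d(b_1,c) \p d(c,b_3) \;\leq\; d(b_1,c) \p d(c,b_2) \p |d(c,b_2) \m d(c,b_3)|.$$
Since by definition $|d(c,b_2) \m d(c,b_3)| \leq \sup_{c' \in C}|d(c',b_2) \m d(c',b_3)| \leq d_{\min}(b_2,b_3/C)$, another application of translation-invariance gives
$$d(b_1,c) \p d(c,b_3) \;\leq\; d(b_1,c) \p d(c,b_2) \p d_{\min}(b_2,b_3/C).$$

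Next I would take the infimum over $c \in C$ of both sides. The left-hand side is exactly $d_{\max}(b_1,b_3/C)$. For the right-hand side, the quantity $d_{\min}(b_2,b_3/C)$ does not depend on $c$, so by part $(iii)$ of the upper semicontinuity proposition (applied with $X = \{d(b_1,c) \p d(c,b_2) : c \in C\}$ and $Y = \{d_{\min}(b_2,b_3/C)\}$), the infimum equals $d_{\max}(b_1,b_2/C) \p d_{\min}(b_2,b_3/C)$. Combining these gives the claimed inequality.

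The argument is essentially immediate once one has identified the right instance of upper semicontinuity, so there is no real obstacle; the only point requiring a moment of care is the degenerate case $C = \emptyset$, where both $d_{\max}$ values equal $\sup R^*$ by the stated convention, and the inequality holds trivially because $\sup R^* \p \alpha \geq \sup R^*$ by translation-invariance. The only genuinely substantive ingredient is part $(iii)$ of the upper-semicontinuity proposition, which legitimizes pulling a constant summand out of an infimum in $R^*$; this is exactly the feature of $\cR^*$ that makes the ternary relations $d_{\max}$ and $d_{\min}$ interact well.
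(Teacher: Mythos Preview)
Your proof is correct and follows essentially the same route as the paper: fix $c\in C$, use $d(c,b_3)\leq d(c,b_2)\p|d(c,b_2)\m d(c,b_3)|$, bound the absolute-value term by $d_{\min}(b_2,b_3/C)$, and then pass to the infimum over $c$ using upper semicontinuity. The only cosmetic difference is that the paper starts the chain from $d_{\max}(b_1,b_3/C)\leq d(b_1,c)\p d(c,b_3)$ (so the left side is already the target) rather than keeping both sides as functions of $c$ before taking the infimum; your explicit treatment of the $C=\emptyset$ case is a nice extra that the paper leaves implicit.
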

\begin{proof}
For any $c\in C$, we have
\begin{align*}
d_{\max}(b_1,b_3/C) &\leq d(b_1,c)\p d(b_3,c)\\
 &\leq d(b_1,c)\p d(b_2,c)\p |d(b_2,c)\m d(b_3,c)|\\
  &\leq d(b_1,c)\p d(b_2,c)\p d_{\min}(b_2,b_3/C),
\end{align*}
which proves the result.
\end{proof}

We now focus on the ternary relation $\ind^{d_{\max}}$. When $\cR$ is Urysohn, we have that $\ind^f$ implies $\ind^{d_{\max}}$. Our next result, which is the main technical tool in the analysis of simplicity, characterizes when these two relations coincide.

\begin{proposition}\label{simpleFork}
If $\cR$ is a Urysohn monoid, then the following are equivalent.
\begin{enumerate}[$(i)$]
\item $\ind^f$ coincides with $\ind^{d_{\max}}$.
\item For all $r,s\in R$, if $r\leq s$ then $r\p r\p s= r\p s$.
\end{enumerate}
\end{proposition}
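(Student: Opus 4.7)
The plan is to establish the equivalence in each direction, both relying on Theorem~\ref{thm:forks}. Since $\ind^f$ always implies $\ind^{d_{\max}}$ (by Proposition~\ref{prop:SIR}$(b)(i)$), the real content is the implication $\ind^{d_{\max}}\Rightarrow\ind^f$ and its relation to condition~(ii).

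For $(i)\Rightarrow(ii)$, I argue the contrapositive. Fix $r\leq s$ in $R$ with $r\p r\p s>r\p s$ and realize in $\MU{\cR}$ the four-point $\cR$-metric space $\{a,b_1,b_2,c\}$ specified by $d(c,b_1)=r$, $d(c,b_2)=s$, $d(b_1,b_2)=r\p s$, $d(a,b_1)=r$, $d(a,b_2)=r\p r\p s$, and $d(a,c)=r\p r$. Each of the four triangle inequalities reduces to a direct monoid manipulation using $r\leq s$, so such a configuration exists in $\cU_\cR$ by universality. A direct computation of $d_{\max}(b_i,b_j/c)$ and $d_{\max}(b_i,b_j/a,c)$ for all pairs (including the diagonal pairs $(b_i,b_i)$) yields $\{a\}\ind^{d_{\max}}_{\{c\}}\{b_1,b_2\}$. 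However, $|d(a,b_1)\m d(a,b_2)|=|r\m(r\p r\p s)|$ strictly exceeds $\max\{|r\m s|,\frac{1}{3}(r\p s)\}=d_{\min}(b_1,b_2/c)$, so $d_{\min}(b_1,b_2/a,c)>d_{\min}(b_1,b_2/c)$, and Theorem~\ref{thm:forks} gives $\{a\}\nind^f_{\{c\}}\{b_1,b_2\}$. The strict inequality between the two $|\cdot\m\cdot|$ values is a short algebraic lemma: $|r\m(r\p r\p s)|\geq|r\m s|$ always, with strictness following from $r\p r\p s>r\p s$ and upper semicontinuity in $\cR^*$, while the comparison with $\frac{1}{3}(r\p s)$ likewise uses $r\p r\p s>r\p s$.

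For $(ii)\Rightarrow(i)$, assume condition~(ii) and suppose $A\ind^{d_{\max}}_C B$. By Theorem~\ref{thm:forks}, I must verify $d_{\min}(b_1,b_2/AC)=d_{\min}(b_1,b_2/C)$ for all $b_1,b_2\in B$. The $\geq$ direction is automatic, so the task reduces to bounding $|d(a,b_1)\m d(a,b_2)|$ by $d_{\min}(b_1,b_2/C)$ for each $a\in A$. Writing $\alpha_i=d(a,b_i)$, $\gamma_i=d(c,b_i)$, and $\eta=d(a,c)$ for fixed $c\in C$, the hypothesis $A\ind^{d_{\max}}_C B$ supplies both $\alpha_1\p\alpha_2\geq\gamma_1\p\gamma_2$ (from the pair $(b_1,b_2)$) and $2\alpha_i\geq 2\gamma_i$ (from the diagonal pair $(b_i,b_i)$). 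Combining these with the triangle inequalities on $\{a,c,b_i\}$ yields a preliminary estimate of shape $|\alpha_1\m\alpha_2|\leq|\gamma_1\m\gamma_2|\p 2\eta$, and condition~(ii) -- equivalently $r\p t=t$ whenever $t=r\p s$ with $r\leq s$ -- is then applied to absorb the superfluous $2\eta$-contribution against the terms already bounded by $d_{\min}(b_1,b_2/C)$, notably the $\frac{1}{3}d(b_1,b_2)$ summand. Passing to the infimum/supremum over $c\in C$ completes the argument.

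The main obstacle is the $(ii)\Rightarrow(i)$ direction: the algebraic bookkeeping with condition~(ii) is delicate, since the condition only applies in the regime $r\leq s$, and the extra factors of $\eta$ must be absorbed into the right target. The diagonal-pair information $2\alpha_i\geq 2\gamma_i$ is crucial here, being exactly what forces the relevant terms into the correct ordering for condition~(ii) to bite. Throughout, upper semicontinuity in $\cR^*$ underpins the passage between $\p$, $|\cdot\m\cdot|$, and the infimum/supremum-based definitions, and in particular ensures that infima taken over $C$ or over $\{x:\cdots\}$ are realized as inequalities at the infimum.
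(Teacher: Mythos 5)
Your $(i)\Rightarrow(ii)$ direction is essentially correct: the four-point space $\{a,b_1,b_2,c\}$ you describe is a genuine $\cR$-metric space, it does satisfy $a\ind^{d_{\max}}_{c}b_1b_2$ (including the diagonal pairs), and the strict inequality $d_{\min}(b_1,b_2/c)\leq s<|r\m(2r\p s)|$ holds by the same argument as in the paper, which uses a five-point configuration with two base points but the same distances; so this half is fine and even slightly leaner than the printed proof.

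The gap is in $(ii)\Rightarrow(i)$. First, a quantifier slip: $A\ind^{d_{\max}}_C B$ only gives $d(a,b_1)\p d(a,b_2)\geq\inf_{c\in C}\bigl(d(b_1,c)\p d(c,b_2)\bigr)$ and $2d(a,b_i)\geq 2d(b_i,C)$; it does not give $\alpha_1\p\alpha_2\geq\gamma_1\p\gamma_2$ or $2\alpha_i\geq2\gamma_i$ for each \emph{fixed} $c\in C$, as your outline asserts. Second, and decisively, the estimate-and-absorb scheme cannot yield the required bound $|d(a,b_1)\m d(a,b_2)|\leq d_{\min}(b_1,b_2/C)$. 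Your preliminary estimate $|\alpha_1\m\alpha_2|\leq|\gamma_1\m\gamma_2|\p 2\eta$ is just the triangle inequality and carries the summand $\eta=d(a,c)$; since $y\leq x\p y$ in any distance monoid, and applications of condition $(ii)$ are equalities, any quantity derived from this estimate is still at least $\eta$, and after minimizing over $c\in C$ it is still at least $d(a,C)$. But $d(a,C)$ can strictly exceed $d_{\min}(b_1,b_2/C)$. Concretely, in $\cR_2=(\{0,1,2\},+_2,\leq,0)$ take $C=\{c\}$ with $d(b_1,c)=d(b_2,c)=d(b_1,b_2)=1$, $d(a,c)=2$, $d(a,b_1)=1$, $d(a,b_2)=2$: here $a\ind^{d_{\max}}_c b_1b_2$ and the desired conclusion is true ($|d(a,b_1)\m d(a,b_2)|=1=d_{\min}(b_1,b_2/c)$), yet your bound evaluates to $2$ and no use of $(ii)$ brings it below $d(a,c)=2$. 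So the ``absorption of the $2\eta$-contribution'' is not available, and the core of the implication is missing. The paper's proof avoids routing through $d(a,c)$ entirely: it argues by contradiction from $d(a,b_1)\p d_{\min}(b_1,b_2/C)<d(a,b_2)$, first disposing of the case $\frac{1}{3}d(b_1,b_2)\leq d(a,b_1)$ via $(ii)$ and $d(a,b_2)\leq d(a,b_1)\p d(b_1,b_2)$, then the case $d(a,b_1)\asymp d_{\max}(a,b_1/C)$ via Lemma \ref{lem:dmax} together with the fact that $(ii)$ forces $d_{\max}(a,b_1/C)\leq 2d(a,b_1)$, and finally, when $d(a,b_1)\prec d_{\max}(a,b_1/C)$, deriving $2d(a,b_1)<d_{\max}(b_1,b_1/C)$, which contradicts the diagonal instance of $\ind^{d_{\max}}$. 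Some archimedean case analysis of this kind (or a substitute for it) is needed; your outline does not supply it.
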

\begin{proof}
$(i)\Rightarrow (ii)$. Suppose $(ii)$ fails, and fix $r,s\in R$, with $r\leq s$ and $r\p s<2r\p s$. Define the space $(X,d)$ such that $X=\{a,b_1,b_2,c_1,c_2\}$ and
\begin{equation*}
\begin{aligned}
 &d(a,b_1) =d(b_1,c_1)=d(b_1,c_2)=r,\\
 &d(b_2,c_1) =d(b_2,c_2)=s, \\
 &d(a,c_1) =d(a,c_2)=d(c_1,c_2)=2r,
 \end{aligned}
 \hspace{1cm}
 \begin{aligned}
 &d(b_1,b_2)=r\p s,\\
 &d(a,b_2) =2r\p s.
 \end{aligned}
 \end{equation*}
Then $(X,d)$ is an $\cR^*$-metric space, and so we may assume $(X,d)$ is a subspace of $\U_\cR$. Setting $C=\{c_1,c_2\}$, we have $a\ind^{d_{\max}}_C b_1b_2$ by construction. So to show the failure of $(i)$, we show $a\nind^f_C b_1b_2$. Note that
$$
\sup_{c\in C}|d(b_1,c)\m d(b_2,c)|=|r\m s|\leq s\mand d(b_1,b_2)=r\p s\leq 3s,
$$
which gives $d_{\min}(b_1,b_2/C)\leq s$. Therefore, since $r\p s<2r\p s$, we have
$$
d_{\min}(b_1,b_2/C)\leq s<|(2r\p s)\m r|=|d(a,b_1)\m d(a,b_2)|.
$$
So $a\nind^f_C b_1b_2$ by Theorem \ref{thm:forks}.

$(ii)\Rightarrow(i)$. Assume $\cR$ satisfies $(ii)$. By upper semicontinuity, the same algebraic property holds for $\cR^*$. In particular, $2\alpha=3\alpha$ for all $\alpha\in R^*$, which then implies $2\alpha=n\alpha$ for all $\alpha\in R^*$ and $n\geq 2$.

In order to prove $(i)$, it suffices by Theorem \ref{thm:forks} to show $\ind^{d_{\max}}$ implies $\ind^f$. So assume $A\nind^f_C B$ and, for a contradiction, suppose $A\ind^{d_{\max}}_C B$. By Theorem \ref{thm:forks}, there are $a\in A$ and $b_1,b_2\in B$ such that $d_{\min}(b_1,b_2/C)<|d(a,b_1)\m d(a,b_2)|$. Without loss of generality, we assume $d(a,b_1)\leq d(a,b_2)$, and so we have
\begin{equation*}
d(a,b_1)\p d_{\min}(b_1,b_2/C)<d(a,b_2).\tag{$\dagger$}
\end{equation*}
If $\alpha:=\frac{1}{3}d(b_1,b_2)\leq d(a,b_1)$ then, by $(\dagger)$, 
$$
d(a,b_1)\p\alpha <d(a,b_2)\leq d(a,b_1)\p d(b_1,b_2)\leq d(a,b_1)\p 3\alpha=d(a,b_1)\p 2\alpha,
$$
which contradicts $(ii)$. Therefore, we may assume $d(a,b_1)<\frac{1}{3}d(b_1,b_2)$. 

Suppose, toward a contradiction, that $d(a,b_1)\asymp d_{\max}(a,b_1/C)$. Since $2d(a,b_1)=nd(a,b_1)$ for all $n\geq 2$, it follows that $d_{\max}(a,b_1/C)\leq 2d(a,b_1)$. Combining this observation with $(\dagger)$ and Lemma \ref{lem:dmax}, we have
\begin{align*}
d(a,b_1)\p d_{\min}(b_1,b_2/C) &< d(a,b_2)\\
 &\leq d_{\max}(a,b_1/C)\p d_{\min}(b_1,b_2/C)\\
 &\leq 2d(a,b_1)\p d_{\min}(b_1,b_2/C),
\end{align*}
which, since $d(a,b_1)<\frac{1}{3}d(b_1,b_2)\leq d_{\min}(b_1,b_2/C)$,  contradicts $(ii)$. 

So we have $d(a,b_1)\prec d_{\max}(a,b_1/C)$. Moreover, by Lemma \ref{lem:dmax}, we also have $d_{\max}(a,b_1/C)\leq  d_{\max}(b_1,b_1/C)\p d(a,b_1)$. It follows that $d_{\max}(a,b_1/C)\preceq d_{\max}(b_1,b_1/C)$, and so $d(a,b_1)\prec d_{\max}(b_1,b_1/C)$. But then $d(a,b_1)\p d(a,b_1)<d_{\max}(b_1,b_1/C)$, which contradicts $A\ind^{d_{\max}}_C B$. 
\end{proof} 

We can now give the characterization of simplicity for $\MU{\cR}$. The reader should compare the statement of this result to the statement of Theorem \ref{thm:stable}.

\begin{theorem}\label{thm:simple}
Given a Urysohn monoid $\cR$, the following are equivalent.
\begin{enumerate}[$(i)$]
\item $\TR{\cR}$ is simple.
\item $\ind^{\dist}$ implies $\ind^f$.
\item $\ind^f$ coincides with $\ind^{d_{\max}}$.
\item For all $r,s\in R$ if $r\leq s$ then $r\p r\p s=r\p s$.
\end{enumerate}
\end{theorem}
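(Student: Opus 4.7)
My plan is to close the theorem via the chain (iv) $\Rightarrow$ (ii) $\Rightarrow$ (i) $\Rightarrow$ (iv), noting that (iii) $\iff$ (iv) is Proposition \ref{simpleFork}. For (iv) $\Rightarrow$ (ii), I would fix $A \ind^{\dist}_C B$ and apply Theorem \ref{thm:forks} to reduce $A \ind^f_C B$ to the two equalities $d_{\max}(b_1,b_2/AC) = d_{\max}(b_1,b_2/C)$ and $d_{\min}(b_1,b_2/AC) = d_{\min}(b_1,b_2/C)$ for all $b_1,b_2 \in B$. Given $a \in A$ with $\mu = d(a,b_1) \leq d(a,b_2) = \nu$ and $\delta = d(a,C) \leq \mu$ (from the distance-independence hypothesis), the $d_{\max}$ equality reduces to showing $\mu \p \nu \geq d_{\max}(b_1,b_2/C)$. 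Upper semicontinuity and triangle inequality via $a$ yield $d_{\max}(b_1,b_2/C) \leq \mu \p \nu \p 2\delta$, and then $(iv)$ collapses this chain as $\mu \p \nu \p 2\delta \leq \mu \p \nu \p 2\mu = 3\mu \p \nu = \mu \p \nu$, where the last two steps iterate $(iv)$ at $r = \mu$, $s = \nu$. A parallel but more delicate argument handles the $d_{\min}$ equality, using the identity $\frac{1}{3} d(b_1,b_2) = \frac{1}{2} d(b_1,b_2)$ that follows from the $(iv)$-consequence $3\sigma = 2\sigma$.

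For (ii) $\Rightarrow$ (i), local character of $\ind^{\dist}$ (Proposition \ref{prop:SIR}$(b)(iv)$) immediately transfers to $\ind^f$, so $\TR{\cR}$ is simple by Fact \ref{fact:low}$(a)$. For (i) $\Rightarrow$ (iv), I would argue the contrapositive. Assume $(iv)$ fails; I first extract $\alpha \in R^*$ with $2\alpha < 3\alpha$ (whence also $2\alpha < 4\alpha$), taking $\alpha = r$ when $(iv)$ already fails at some $r = s$ and reducing to that case using associativity and translation-invariance of the monoid. Then, inside $\MU{\cR}$, I construct an indiscernible sequence $(b_i)_{i < \aleph_1}$ with $d(b_i, b_j) = 2\alpha$ for all $i \neq j$, together with a point $a$ satisfying $d(a, b_i) = \alpha$ for every $i$; the configuration is consistent since $2\alpha \leq d(a,b_i) \p d(a,b_j) = 2\alpha$. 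For any countable $C \subseteq \{b_i : i < \aleph_1\}$, pick $b_i, b_j \notin C$: then $d_{\max}(b_i, b_j/C) = 4\alpha$ (witnessed by any $b_k \in C$) while $d_{\max}(b_i, b_j/aC) = \min(4\alpha, 2\alpha) = 2\alpha$, so $a \nind^{d_{\max}}_C \{b_i\}$. By Proposition \ref{prop:SIR}$(b)(i)$, $a \nind^f_C \{b_i\}$, so local character of $\ind^f$ fails, contradicting simplicity via Fact \ref{fact:low}$(a)$.

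The main technical obstacle is the $d_{\min}$ clause of (iv) $\Rightarrow$ (ii): while the $d_{\max}$ side is a clean chase of the identity $2r \p s = r \p s$, bounding $|d(a,b_1) \m d(a,b_2)|$ by $\sigma := d_{\min}(b_1, b_2/C)$ requires collapsing a factor of $2$ in the naive estimate $\nu \leq \mu \p 2\sigma$ (obtained from $d(b_1,b_2) \leq 2\sigma$). That collapse splits into cases depending on whether $\sigma \leq \mu$, and careful use of the $(iv)$-induced absorption, combined with the hypothesis $\delta \leq \mu$, is needed in the harder case. A secondary concern is the algebraic reduction in (i) $\Rightarrow$ (iv) of a general failure of $(iv)$ at arbitrary $r \leq s$ to a failure at $r = s$; I anticipate this is a short argument, but it may deserve its own lemma exploiting the structural constraints on distance monoids.
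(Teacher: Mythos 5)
The (i)$\Rightarrow$(iv) direction of your proposal has a genuine gap. You claim that a failure of (iv) at some pair $r\leq s$ can be reduced, ``using associativity and translation-invariance,'' to a diagonal failure, i.e.\ to the existence of some $\alpha\in R^*$ with $2\alpha<3\alpha$. This is false. Take the monoid $\cS=(\{0,1,2,5,6,7\},+_S,\leq,0)$ discussed in the paper just before Proposition \ref{prop:SOclasssize}: it is a finite (hence Urysohn) distance monoid in which $1\p 5=6<7=1\p 1\p 5$, so (iv) fails, and yet $2\alpha=3\alpha$ for every $\alpha\in S$ (indeed $2\cdot 1=2=3\cdot 1$, $2\cdot 2=2=3\cdot 2$, $2\cdot 5=7=3\cdot 5$, and similarly for $6,7$); since $\cS$ is finite, $\cS^*=\cS$, so there is no such $\alpha$ in $R^*$ either. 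Your witnessing configuration (an uncountable equilateral family at mutual distance $2\alpha$ with a center at distance $\alpha$) intrinsically needs a single-archimedean-class failure $2\alpha<4\alpha$, so it cannot detect failures of (iv) that, as in $\cS$, involve two archimedean classes. A witness built from \emph{both} distances $r$ and $s$ is required; the paper uses the four-point configuration $a,b_1,b_2,c$ with $d(a,b_1)=d(a,c)=r$, $d(a,b_2)=d(b_2,c)=s$, $d(b_1,c)=2r$, $d(b_1,b_2)=r\p s$, which via Theorem \ref{thm:forks} gives $a\nind^f_c b_1b_2$ while $b_1b_2\ind^f_c a$, refuting simplicity through failure of symmetry (Fact \ref{fact:low}$(a)$) rather than local character. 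Until this step is replaced, your contrapositive argument does not cover all non-simple cases.

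The rest of your plan is sound but worth a comment. The directions (ii)$\Rightarrow$(i) and (iii)$\Leftrightarrow$(iv) match the paper. For (iv)$\Rightarrow$(ii) you verify both clauses of Theorem \ref{thm:forks} directly; the $d_{\min}$ case analysis you sketch can indeed be closed (with $\mu=d(a,b_1)\leq\nu=d(a,b_2)$, $\delta=d(a,C)$, $\sigma=d_{\min}(b_1,b_2/C)$: if $\sigma\leq\mu$ then $\nu\leq\mu\p 2\sigma=\mu\p\sigma$, and if $\mu\leq\sigma$ then $\nu\leq 2\delta\p\mu\p\sigma\leq 3\mu\p\sigma=\mu\p\sigma$), provided you first note, as the paper does, that (iv) transfers from $R$ to $R^*$ by upper semicontinuity. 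However, this duplicates work you have already paid for: since you invoke Proposition \ref{simpleFork} for (iii)$\Leftrightarrow$(iv), under (iv) you know $\ind^f$ coincides with $\ind^{d_{\max}}$, so it suffices to check the single inequality $d_{\max}(b_1,b_2/C)\leq d(a,b_1)\p d(a,b_2)$, which is exactly the paper's shorter route and lets you drop the $d_{\min}$ clause entirely.
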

\begin{proof}
$(i)\Rightarrow(iv)$: Suppose $(iv)$ fails, and fix $r,s\in R$ such that $r\leq s$ and $r\p s<2r\p s$. Choose $a,b_1,b_2,c\in\U_\cR$ such that
\begin{equation*}
\begin{aligned}
&d(a,b_1) =d(a,c)=r,\\
&d(a,b_2) =d(b_2,c)=s,
\end{aligned}
\hspace{1cm}
\begin{aligned}
&d(b_1,c) =2r,\\
&d(b_1,b_2) =r\p s.
\end{aligned}
\end{equation*}
Then $d(a,b_1)\p d(a,b_2)=r\p s<2r\p s=d(b_1,c)\p d(b_2,c)$, and so $a\nind^f_c b_1b_2$ by Theorem \ref{thm:forks}. On the other hand $d_{\max}(a,a/b_1b_2c)=2r=d(a,c)\p d(a,c)$, and so $b_1b_2\ind^f_c a$. Therefore $\TR{\cR}$ is not simple by Fact \ref{fact:low}$(a)$.

$(iv)\Rightarrow(ii)$: Assume $(iv)$ and suppose $A\ind^{\dist}_C B$. By Proposition \ref{simpleFork}, it suffices to fix $a\in A$ and $b_1,b_2\in B$, and show $d_{\max}(b_1,b_2/C)\leq d(a,b_1)\p d(a,b_2)$.

Without loss of generality, assume $d(a,b_1)\leq d(a,b_2)$. Since $A\ind^{\dist}_C B$, we have $d(a,C)\leq d(a,b_1)$, which means $d_{\max}(a,a/C)\leq 2d(a,b_1)$. As in the proof of Proposition \ref{simpleFork}, if $\alpha,\beta\in R^*$ then $\alpha\leq\beta$ implies $\alpha\p\alpha\p\beta=\alpha\p\beta$. Combining these observations with Lemma \ref{lem:dmax}, we have
\begin{align*}
d_{\max}(b_1,b_2/C) &\leq d_{\max}(a,b_1/C)\p d(a,b_2)\\
 &\leq d_{\max}(a,a/C)\p d(a,b_1)\p d(a,b_2)\\
 &\leq 3d(a,b_1)\p d(a,b_2)\\
  &= d(a,b_1)\p d(a,b_2).
 \end{align*}

$(ii)\Rightarrow(i)$: If $(ii)$ holds then, by Proposition \ref{prop:SIR}$(b)(iv)$, $\ind^f$ satisfies local character. Therefore $\TR{\cR}$ is simple by Fact \ref{fact:low}$(a)$.

$(iii)\Leftrightarrow(iv)$: By Proposition \ref{simpleFork}.
\end{proof}

As a corollary, we obtain another characterization of $\ind^f$ for simple $\TR{\cR}$. In particular, this result will be the key tool in the analysis of supersimplicity for $\TR{\cR}$. See also Section \ref{rem:thorn}. 

\begin{corollary}\label{cor:simple}
Given a Urysohn monoid $\cR$, the following are equivalent.
\begin{enumerate}[$(i)$]
\item $\TR{\cR}$ is simple.
\item For all $A,B,C\subset\U_\cR$, $A\ind^f_C B$ if and only if $2d(a,BC)=2d(a,C)$ for all $a\in A$.
\end{enumerate}
\end{corollary}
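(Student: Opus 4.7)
The plan is to deduce both implications from Theorem \ref{thm:simple}, observing that the hypothesis ``$2d(a,BC)=2d(a,C)$ for all $a\in A$'' is precisely the diagonal instance ($a_1=a_2=a$) of the condition $B\ind^{d_{\max}}_C A$, and that it supplies exactly the ingredient needed to re-run the triangle-inequality calculation from the proof of Theorem \ref{thm:simple}$(iv)\Rightarrow(ii)$.

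For $(ii)\Rightarrow(i)$: if $A\ind^{\dist}_C B$ then $d(a,BC)=d(a,C)$, hence \emph{a fortiori} $2d(a,BC)=2d(a,C)$, and so $A\ind^f_C B$ by the assumed biconditional. Thus (ii) forces $\ind^{\dist}$ to imply $\ind^f$, and $\TR{\cR}$ is simple by Theorem \ref{thm:simple}.

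For $(i)\Rightarrow(ii)$ I would treat the two directions of the biconditional separately. The forward direction is immediate: under simplicity, $\ind^f$ is symmetric (Fact \ref{fact:low}$(a)$) and refines $\ind^{d_{\max}}$ (Proposition \ref{prop:SIR}$(b)(i)$), so $A\ind^f_C B$ yields $B\ind^{d_{\max}}_C A$, and specializing to $a_1=a_2=a$ gives $2d(a,BC)=d_{\max}(a,a/BC)=d_{\max}(a,a/C)=2d(a,C)$. For the reverse direction, Theorem \ref{thm:simple}$(iii)$ reduces matters to checking $A\ind^{d_{\max}}_C B$, i.e.\ that for each $a\in A$ and $b_1,b_2\in B$, $d(a,b_1)\p d(a,b_2)\geq d_{\max}(b_1,b_2/C)$. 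Rewriting the hypothesis as $d_{\max}(a,a/C)=2d(a,C)=2d(a,BC)\leq 2d(a,b)$ for every $b\in B$, and assuming WLOG $d(a,b_1)\leq d(a,b_2)$, I would apply Lemma \ref{lem:dmax} twice --- once to $(b_1,a,b_2)$ and once to $(a,a,b_1)$, together with the trivial bound $d_{\min}(\cdot,\cdot/C)\leq d(\cdot,\cdot)$ coming from the triangle inequality --- to obtain
$$
d_{\max}(b_1,b_2/C)\leq d_{\max}(a,a/C)\p d(a,b_1)\p d(a,b_2)\leq 3d(a,b_1)\p d(a,b_2).
$$
Collapsing $3d(a,b_1)\p d(a,b_2)$ to $d(a,b_1)\p d(a,b_2)$ via the monoid identity $3r\p s=r\p s$ for $r\leq s$ (a consequence of Theorem \ref{thm:simple}$(iv)$) completes the argument.

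The only real obstacle is the conceptual one of recognizing this equivalence at the outset: once the hypothesis is seen as the minimal distance-control needed to bound $d_{\max}(a,a/C)$ by $2d(a,b_1)$, the main estimate transplants essentially verbatim from the proof of Theorem \ref{thm:simple}, and no new technique is required.
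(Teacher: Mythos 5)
Your proof is correct, and it agrees with the paper on the easy parts: $(ii)\Rightarrow(i)$ and the forward half of $(i)\Rightarrow(ii)$ (via symmetry of $\ind^f$ in simple theories together with $\ind^f\Rightarrow\ind^{d_{\max}}$) are exactly the paper's argument. Where you diverge is the reverse half of $(i)\Rightarrow(ii)$: the paper never recomputes anything there — it notes that simplicity makes $\ind^{d_{\max}}$ symmetric (since it coincides with $\ind^f$), that $A\ind^{d_{\max}}_C B$ is equivalent to $a\ind^{d_{\max}}_C B$ for each $a\in A$, and that $B\ind^{d_{\max}}_C a$ is literally the condition $2d(a,BC)=2d(a,C)$, so both directions of the biconditional drop out formally. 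You instead re-run the triangle-inequality estimate from the proof of Theorem \ref{thm:simple}$(iv)\Rightarrow(ii)$, observing that it only uses the bound $d_{\max}(a,a/C)\leq 2d(a,b_1)$, which the doubled-distance hypothesis supplies. Your route is sound and has the small virtue of making explicit that the $(iv)\Rightarrow(ii)$ computation needs only $2d(a,BC)=2d(a,C)$ rather than full $\ind^{\dist}$; the paper's route is shorter and avoids repeating the metric estimates. Two details you should state rather than treat as trivial: the bound $d_{\min}(x,y/C)\leq d(x,y)$ (it follows from the triangle inequality via upper semicontinuity, applied to both terms in the definition of $d_{\min}$), and the fact that the identity $\alpha\p\alpha\p\beta=\alpha\p\beta$ for $\alpha\leq\beta$ — hence $3\alpha\p\beta=\alpha\p\beta$ — must be applied in $\cR^*$, not just $\cR$, which is justified by upper semicontinuity as the paper notes inside the proofs of Proposition \ref{simpleFork} and Theorem \ref{thm:simple}.
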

\begin{proof}
If $(ii)$ holds then $\ind^{\dist}$ implies $\ind^f$, and so $\TR{\cR}$ is simple by Theorem \ref{thm:simple}. Conversely, suppose $\TR{\cR}$ is simple. Then $\ind^f$ coincides with $\ind^{d_{\max}}$ by Theorem \ref{thm:simple}, and so $\ind^{d_{\max}}$ is symmetric. Fix $A,B,C\subset\U_\cR$. Then
\begin{align*}
\textstyle A\ind^{d_{\max}}_C B &\miff \textstyle a\ind^{d_{\max}}_C B\text{ for all $a\in A$}\\
&\miff \textstyle B\ind^{d_{\max}}_C a\text{ for all $a\in A$}\\
&\miff 2d(a,BC)=2d(a,C)\text{ for all $a\in A$}.\qedhere
\end{align*}
\end{proof}

The final result of this section is motivated by the distance monoid $\cR_n$ in the case when $n=1,2$ (see Example \ref{allEX}(3)). Recall that $\cU_{\cR_2}$ can be viewed as the countable random graph. Moreover, $\cU_{\cR_1}$ is simply a countably infinite complete graph, and therefore its theory is interdefinable with the theory of infinite sets in the empty language. With these interpretations, $\TR{\cR_1}$ and $\TR{\cR_2}$ are both classical examples in which nonforking is as uncomplicated as possible. In particular, $A\ind^f_C B$ if and only if $A\cap B\seq C$ (see e.g. \cite{Wabook}). We generalize this behavior as follows.  

\begin{definition}
A distance monoid $\cR$ is \textbf{metrically trivial} if $r\p s=\sup R$ for all $r,s\in R^{>0}$ (in particular, $R$ contains a maximal element).
\end{definition} 

The following properties of metrically trivial monoids are easy to verify.

\begin{proposition}
Let $\cR$ be a countable distance monoid.
\begin{enumerate}[$(a)$]
\item $\cR$ is metrically trivial if and only if $r\leq s\p t$ for all $r,s,t\in R^{>0}$.
\item If $\cR$ is metrically trivial then $\cR^*$ is metrically trivial (use Theorem \ref{thm:R*}(b)).
\item If $\cR$ is metrically trivial then $\cR$ is a Urysohn monoid (use Theorem \ref{thm:QE}).
\end{enumerate}
\end{proposition}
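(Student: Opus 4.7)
The three parts are all short verifications that I would handle in order. Part $(a)$ just unpacks the definition, part $(b)$ is a direct application of the upper-semicontinuity formula $\alpha\p\beta=\inf\{r\p s:r,s\in R,~\alpha\leq r,~\beta\leq s\}$ from Theorem~\ref{thm:R*}$(b)(iii)$, and part $(c)$ reduces via Theorem~\ref{thm:QE} to checking that $x\mapsto x\p r$ is continuous on $R^*$, which in turn follows immediately from part~$(b)$.

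For $(a)$, the forward implication is immediate since, when $\cR$ is metrically trivial and $s,t\in R^{>0}$, $s\p t=\sup R$ is an upper bound of $R$. For the converse, assume $r\leq s\p t$ whenever $r,s,t\in R^{>0}$. If $R=\{0\}$ the conclusion is vacuous, so I would pick some $s\in R^{>0}$; by hypothesis $s\p s$ dominates every $r\in R^{>0}$ and of course $s\p s\geq 0$, so $s\p s\in R$ is an upper bound for $R$ and hence a maximum $M$. For any $s,t\in R^{>0}$, the element $s\p t\in R$ is again such an upper bound, so $s\p t=M=\sup R$, giving metric triviality.

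For $(b)$, assume $\cR$ is metrically trivial with maximum $M$; by Theorem~\ref{thm:R*}$(b)(i)$, $M$ remains the maximum of $R^*$. Fix $\alpha,\beta\in R^*$ with $\alpha,\beta>0$. The upper-semicontinuity formula writes $\alpha\p\beta$ as an infimum over pairs $(r,s)\in R\times R$ with $r\geq\alpha>0$ and $s\geq\beta>0$; every such pair lies in $R^{>0}\times R^{>0}$, so $r\p s=M$ by metric triviality of $\cR$. The infimum is therefore $M$, so $\alpha\p\beta=M$, establishing metric triviality of $\cR^*$. (If $R^{*,>0}$ is empty then $R^*=\{0\}$ and the conclusion is automatic.)

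For $(c)$, by Theorem~\ref{thm:QE} it suffices to check that for each $r\in R$ the map $f_r:R^*\to R^*$, $x\mapsto x\p r$, is continuous in the order topology on $R^*$. If $r=0$ then $f_r$ is the identity, and if $R=\{0\}$ there is nothing to check, so I may assume $r>0$. By part $(b)$ applied to $\cR^*$ we have $f_r(x)=M$ for every $x>0$ while $f_r(0)=r$, so the preimage of any basic open subset of $R^*$ is one of $\emptyset$, $\{0\}$, $(0,M]$, or $R^*$. The set $(0,M]$ is open as a ray in the order topology, and the one nontrivial point is that $\{0\}$ is open; for this I would invoke Theorem~\ref{thm:R*}$(b)(ii)$, which guarantees that the non-maximal element $0\in R$ (non-maximal because $0<r\in R$) has an immediate successor $0^+$ in $R^*$, making $\{0\}=[0,0^+)$ basic open. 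With this successor-in-$R^*$ observation—really the only subtle point in the whole proof—continuity of $f_r$ is immediate and $\cR$ is a Urysohn monoid.
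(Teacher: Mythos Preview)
Your proof is correct and follows exactly the hints the paper provides. The paper itself gives no proof of this proposition, stating only that the properties ``are easy to verify'' and indicating which earlier results to use; your argument carries out precisely those verifications, with the one genuinely nontrivial observation being that $\{0\}$ is open in $R^*$ because $0$ has an immediate successor (Theorem~\ref{thm:R*}$(b)(ii)$).
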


In particular, property $(a)$ says $\cR$ is metrically trivial if and only if $\cR$-metric spaces coincide with graphs whose edges are arbitrarily colored by nonzero elements of $R$. So $\cU_\cR$ is the random edge-colored graph, with color set $R^{>0}$. 

\begin{theorem}
Given a Urysohn monoid $\cR$, the following are equivalent.
\begin{enumerate}[$(i)$]
\item $\cR$ is metrically trivial.
\item For all $A,B,C\subset\MU{\cR}$, $A\ind^f_C B$ if and only if $A\cap B\seq C$.
\end{enumerate}
\end{theorem}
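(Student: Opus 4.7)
The plan is to route both directions through Corollary~\ref{cor:simple}, which, once $\TR{\cR}$ is known to be simple, characterizes $A \ind^f_C B$ by the condition $2d(a, BC) = 2d(a, C)$ for all $a \in A$. The arithmetic of $2\alpha$ in $\cR^*$ will serve as the bridge between the distance and set-theoretic formulations.

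For $(i) \Rightarrow (ii)$, I first check that metric triviality forces simplicity via Theorem~\ref{thm:simple}$(iv)$: the identity $r \p r \p s = r \p s$ is immediate when $r = 0$, and when $r, s > 0$ both sides equal $\sup R$. Since $\cR^*$ is also metrically trivial (an easy consequence of Theorem~\ref{thm:R*}$(b)$), in $\cR^*$ we have $2\alpha = 0$ if $\alpha = 0$ and $2\alpha = \sup R^*$ otherwise. Hence $2d(a, BC) = 2d(a, C)$ reduces to the biconditional $d(a, BC) = 0 \Leftrightarrow d(a, C) = 0$. The crux is that, assuming the nondegenerate case $R \neq \{0\}$, Theorem~\ref{thm:R*}$(b)(ii)$ furnishes an immediate successor $0^+$ of $0$ in $R^*$, so any two distinct points in $\MU{\cR}$ are at distance $\geq 0^+$; therefore $d(a, X) = 0$ iff $a \in X$ for every $X \subset \MU{\cR}$, and the distance condition becomes exactly $A \cap B \seq C$.

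For $(ii) \Rightarrow (i)$, clause (ii) directly makes $\ind^f$ symmetric (since $A \cap B = B \cap A$) and gives it local character (take $C = A \cap B$, which satisfies $|C| \leq |A|$), so $\TR{\cR}$ is simple by Fact~\ref{fact:low}$(a)$ and Corollary~\ref{cor:simple} becomes available. The heart of the argument: for any $p < q$ in $R^{>0}$, realize $a, b, c \in \MU{\cR}$ with $d(a, b) = p$ and $d(a, c) = d(b, c) = q$ (the triangle inequalities $p \leq q \p q$ and $q \leq p \p q$ hold trivially). Since $a \neq b$, clause (ii) yields $a \ind^f_c b$, whence Corollary~\ref{cor:simple} forces $2p = 2 \min(p, q) = 2q$. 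Therefore $2r$ is constant on $R^{>0}$, with common value $M = r \p r \in R$; since $M \geq r$ for all $r > 0$, $M = \max R$, and for positive $r \leq s$ we have $r \p s \geq r \p r = M$, forcing $r \p s = \sup R$. (The degenerate case $|R^{>0}| \leq 1$ is automatically metrically trivial.) The chief subtlety is the equivalence $d(a, X) = 0 \Leftrightarrow a \in X$ used in the first direction, where the analytic structure of $\cR^*$---specifically that $0$ admits a strict positive successor---is essential and is the one place saturation plays a non-trivial role.
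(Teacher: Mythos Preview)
Your proof is correct. For $(i)\Rightarrow(ii)$ you take the same route as the paper---simplicity from Theorem~\ref{thm:simple}, then Corollary~\ref{cor:simple}, then the two-valued nature of $2\alpha$ in the metrically trivial $\cR^*$---and you are in fact more careful than the paper in justifying the passage from $d(a,BC)=0$ to $a\in BC$ via the immediate successor $0^+$ of $0$ furnished by Theorem~\ref{thm:R*}$(b)(ii)$.

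For $(ii)\Rightarrow(i)$ you take a genuinely different path. The paper argues contrapositively using only that $\ind^f$ implies $\ind^{d_{\max}}$ (Proposition~\ref{prop:SIR}$(b)(i)$): if $\cR$ is not metrically trivial, pick $r>0$ with $2r<\sup R^*$, choose $a,b$ at distance $r$, and observe $d_{\max}(b,b/\emptyset)=\sup R^*>2r=d(a,b)\p d(a,b)$, so $a\nind^f_\emptyset b$ while $\{a\}\cap\{b\}=\emptyset$. Your route instead first extracts simplicity from $(ii)$ (via symmetry and local character of the relation $A\cap B\subseteq C$) so that Corollary~\ref{cor:simple} becomes available again, and then uses a triangle configuration to force $2p=2q$ for all positive $p,q$. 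The paper's argument is shorter and does not need the detour through simplicity; yours has the minor virtue of keeping both directions uniformly routed through Corollary~\ref{cor:simple}.
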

\begin{proof}
$(i)\Rightarrow(ii)$: Suppose $\cR$ is metrically trivial, and fix $A,B,C\subset\U_\cR$ such that $A\nind^f_C B$. Note that metrically trivial monoids satisfy condition $(iv)$ of Theorem \ref{thm:simple}. By Corollary \ref{cor:simple}, there is $a\in A$ and $b\in B\backslash C$ such that $2d(a,b)<2d(a,C)$. Since $\cR^*$ is metrically trivial, we must have $d(a,b)=0$, and so $a=b$.

$(ii)\Rightarrow(i)$: Suppose $\cR$ is not metrically trivial. Then there is $r\in R^{>0}$ such that $r\p r<\sup R$. Fix $a,b\in\U_\cR$ such that $d(a,b)=r$. Then $\{a\}\cap\{b\}=\emptyset$. On the other hand, $d(a,b)\p d(a,b)<\sup R=d_{\max}(b,b/\emptyset)$, and so $a\nind^f_{\emptyset}b$.
\end{proof}

\subsection{Non-axiomatizable Properties}\label{sec:super}

We have now shown that stability and simplicity (and thus all of their equivalent formulations) are each finitely axiomatizable as properties of $\mathbf{RUS}$. In this section, we show that supersimplicity and superstability are also characterized by properties of $\cR$, which, while very natural, are not axiomatizable. 

The idea behind the characterization of supersimplicity is the observation that, due to Corollary \ref{cor:simple}, forking chains in simple Urysohn spaces are witnessed by decreasing distances in $\cR^*$ of the form $2\alpha$, where $\alpha\in\cR^*$. Moreover, if $\TR{\cR}$ is simple then, by Theorem \ref{thm:simple}$(iv)$ and upper semicontinuity, we have $2\alpha\p 2\alpha=2\alpha$ for any $\alpha\in \cR^*$. Altogether, forking chains in simple Urysohn spaces are witnessed by decreasing sequences of idempotent elements of $\cR^*$.

\begin{definition}
Let $\eq(\cR)=\{r\in R:2r=r\}$ be the submonoid of idempotent elements of a distance monoid $\cR$. Let $\eq^<(\cR)=\{r\in\eq(\cR):r\neq\sup R\}$. 
\end{definition}

Call a linear order $I=(I,<)$ \emph{successive} if every non-maximal element $i\in I$ has an immediate successor, denoted $i+1$. If $I$ is successive, $\cR$ is a distance monoid, and $\alpha\in R^*$, then we say \emph{$I$ embeds in $\eq(\cR^*)$ below $\alpha$} if there is an order-preserving sequence $(\beta_i)_{i\in I}$ in $\eq(\cR^*)$ with $\beta_i<\alpha$ for all $i\in I$. If $\beta_i\in\eq(\cR)$ for all $i\in I$, then we say \emph{$I$ embeds in $\eq(\cR)$ below $\alpha$}. 

\begin{lemma}\label{lem:supsim}
Suppose $\cR$ is a countable distance monoid such that, for some $n>0$, $nr\in\eq(\cR)$ for all $r\in R$. If $\alpha\in R^*$ and  $I$ is a successive linear order, which embeds in $\eq(\cR^*)$ below $\alpha$, then $I$ embeds in $\eq(\cR)$ below $\alpha$.
\end{lemma}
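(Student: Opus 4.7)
The plan is to combine upper semicontinuity in $\cR^*$ with the hypothesis $nR \subseteq \eq(\cR)$ in order to approximate each $\beta_i$ from above by a genuine element of $\eq(\cR)$ that still fits strictly below $\beta_{i+1}$ (or strictly below $\alpha$, when $i$ is maximal in $I$). The successive structure of $I$ is exactly what makes such a gap-filling argument work.

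The first step is to establish, for every $\gamma \in R^*$, the identity
\[
n\gamma \;=\; \inf\{nr : r \in R,\ \gamma \leq r\}.
\]
This comes from iterating the upper semicontinuity formula $\mu \p \nu = \inf\{r \p s : r,s \in R,\ \mu \leq r,\ \nu \leq s\}$ and using monotonicity of $\p$. When $\gamma \in \eq(\cR^*)$ is idempotent, $n\gamma = \gamma$, so in that case $\gamma = \inf\{nr : r \in R,\ \gamma \leq r\}$.

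Now let $(\beta_i)_{i \in I}$ be the given embedding into $\eq(\cR^*)$ below $\alpha$. For each non-maximal $i \in I$, applying the displayed identity to $\beta_i$ together with $\beta_i < \beta_{i+1}$ produces some $r_i \in R$ with $\beta_i \leq r_i$ and $nr_i < \beta_{i+1}$; set $\gamma_i := nr_i$. If $I$ has a maximal element $i$, use $\beta_i < \alpha$ in place of $\beta_i < \beta_{i+1}$ to choose $r_i \in R$ with $\beta_i \leq r_i$ and $nr_i < \alpha$, and again set $\gamma_i := nr_i$. In both cases the hypothesis $nR \subseteq \eq(\cR)$ gives $\gamma_i \in \eq(\cR)$, and by construction $\beta_i \leq \gamma_i$ (since $\beta_i = n\beta_i \leq nr_i$).

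To finish, I check that $(\gamma_i)_{i \in I}$ is order-preserving and strictly below $\alpha$. Strictness below $\alpha$ is immediate: for non-maximal $i$, $\gamma_i < \beta_{i+1} < \alpha$; for maximal $i$, $\gamma_i < \alpha$ by construction. For order-preservation, if $i < j$ in $I$ then $i$ is non-maximal, so $i+1$ exists with $i+1 \leq j$, giving $\gamma_i < \beta_{i+1} \leq \beta_j \leq \gamma_j$. The only delicate point in the whole argument is the initial approximation step: the identity $n\gamma = \inf\{nr : r \in R,\ \gamma \leq r\}$ is what lets the $n$-multiple hypothesis on $\cR$ bridge the gap between $\eq(\cR^*)$ and $\eq(\cR)$, and without it one could not force the approximants to land in $\eq(\cR)$.
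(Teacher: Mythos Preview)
Your proof is correct and follows essentially the same approach as the paper's: both use the identity $\beta_i=n\beta_i=\inf\{ns:s\in R,\ \beta_i\leq s\}$ (from upper semicontinuity and idempotence of $\beta_i$) to find $s\in R$ with $\beta_i\leq ns<\beta_{i+1}$ (taking $\beta_{i+1}=\alpha$ when $i$ is maximal), and set $r_i=ns\in\eq(\cR)$. Your version simply spells out the order-preservation check $(\gamma_i<\beta_{i+1}\leq\beta_j\leq\gamma_j)$ more explicitly than the paper does.
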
 
\begin{proof}
Let $(\beta_i)_{i\in I}$ be an order-preserving sequence in $\eq(\cR^*)$, with $\beta_i<\alpha$ for all $i\in I$. Given $i\in I$, we have $\beta_i=n\beta_i=\inf\{ns:s\in R,~\beta_i\leq s\}$ by upper semicontinuity in $\cR^*$. Therefore, there is $s\in R$ such that $\beta_i\leq ns<\beta_{i+1}$ (where $\beta_{i+1}=\alpha$ if $i=\max I$). Let $r_i=ns$. Altogether, the sequence $(r_i)_{i\in I}$ witnesses that $I$ embeds in $\eq(\cR)$ below $\alpha$.
\end{proof}

\begin{theorem}\label{thm:supsim}
Suppose $\cR$ is a Uryoshn monoid and $\TR{\cR}$ is simple. Then $\TR{\cR}$ is supersimple if and only if $\eq(\cR)$ is well-ordered. Moreover, in this case $SU(\TR{\cR})$ is the order type of $\eq^<(\cR)$.
\end{theorem}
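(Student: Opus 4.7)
Plan: The theorem couples an equivalence between supersimplicity and well-orderedness of $\eq(\cR)$ with an $SU$-rank computation; both rest on the characterization of forking from Corollary~\ref{cor:simple}, namely that for a singleton $a$, $a \ind^f_C B$ iff $2d(a, BC) = 2d(a, C)$. Since Theorem~\ref{thm:simple}$(iv)$ and upper semicontinuity force $2\alpha \in \eq(\cR^*)$ for every $\alpha \in R^*$, descending forking chains of $1$-types correspond to strictly descending chains in $\eq(\cR^*)$.

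For the forward direction (contrapositive), suppose $r_0 > r_1 > r_2 > \ldots$ is a strictly descending sequence in $\eq(\cR)$. I would realize in $\U_\cR$ points $a, c_0, c_1, \ldots$ with $d(a, c_i) = r_i$ and $d(c_i, c_j) = r_{\min(i,j)}$ for $i \neq j$; idempotence of each $r_i$ makes all triangle inequalities trivially valid, so these finite approximations embed into $\U_\cR$ by universality. Setting $C_i = \{c_0, \ldots, c_i\}$, we have $d(a, C_i) = r_i$ strictly decreasing, so $2d(a, C_i) = r_i$ strictly decreases in $\eq(\cR^*)$; Corollary~\ref{cor:simple} then forces $\tp(a/C_{i+1})$ to fork over $C_i$ for each $i$, producing an infinite forking chain that contradicts supersimplicity.

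For the reverse direction, assume $\eq(\cR)$ is well-ordered. First I would verify $\eq^<(\cR^*) = \eq^<(\cR)$: for any $\beta \in \eq(\cR^*)$ with $\beta < \sup R^*$, upper semicontinuity gives $\beta = \inf\{2r : r \in R,\ r \geq \beta\}$, a nonempty set contained in $\eq(\cR)$; well-orderedness forces this infimum to be attained, so $\beta \in \eq(\cR)$. The only possible element of $\eq(\cR^*) \setminus \eq(\cR)$ is then $\sup R^*$, occurring when $R$ lacks a maximum, and it is excluded from $\eq^<(\cR^*)$ by definition. The main work is a transfinite induction on $\eq(\cR^*)$ showing $SU(\tp(a/C)) = F(2d(a,C))$ for every $1$-type, where $F(\alpha)$ denotes the ordinal position of $\alpha$ in the well-ordered set $\eq(\cR^*)$; the base case $2d(a, C) = 0$ gives $a \in C$ and $SU = 0$, while the inductive step realizes each $\beta \in \eq(\cR^*)$ strictly below $2d(a, C)$ as $2d(a, b)$ via free amalgamation of $\{a, b\}$ and $aC$ over $\{a\}$ with $d(a, b) = \beta$ (permissible by Proposition~\ref{saturated}), producing a forking extension of rank $F(\beta)$ and recovering $F(2d(a, C))$ as the supremum of $F(\beta) + 1$. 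Applied with $C = \emptyset$ and $2d(a, \emptyset) = \sup R^*$, this yields that $SU(x = x)$ equals the order type of $\eq^<(\cR^*) = \eq^<(\cR)$. Supersimplicity extends to finite tuples via Lascar's inequality: Corollary~\ref{cor:simple} makes forking coordinate-wise, so the $SU$-rank of a tuple is bounded by a finite natural sum of ordinal-valued $1$-type ranks and is itself ordinal-valued.

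The hard part will be the transfinite induction: ensuring forking extensions can be realized at every ordinal level strictly below $F(2d(a, C))$ in the monster model (which requires universality of $\MU{\cR}$ as an $\cR^*$-metric space together with free amalgamation to preserve the value of $2d(a, \cdot)$), and correctly matching the supremum formula for $SU$-rank to ordinal rank at both successor and limit stages of the well-ordering.
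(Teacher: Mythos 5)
Your proposal is correct, and it shares the paper's core machinery: Corollary \ref{cor:simple} (forking of $1$-types is strict decrease of $2d(a,\cdot)$), the observation that simplicity forces $2\alpha\in\eq(\cR^*)$ for all $\alpha\in R^*$, an explicit idempotent-distance configuration turning a descending chain in $\eq(\cR)$ into an infinite forking chain, and a transfinite induction matching $SU$-rank against idempotents below $2d(a,C)$. The genuine difference is in how you get ``$\eq(\cR)$ well-ordered $\Rightarrow$ supersimple.'' The paper proves the contrapositive: a failure of supersimplicity yields a strictly decreasing sequence $(2d(a,B_i))$ in $\eq(\cR^*)$, which Lemma \ref{lem:supsim} pushes down into $\eq(\cR)$; the rank computation is then done only after supersimplicity is in hand, in the form ``$SU(a/B)\geq\mu$ iff $\mu$ embeds in $\eq(\cR^*)$ below $2d(a,B)$.'' You instead prove the implication directly: under well-ordering you identify $\eq^<(\cR^*)=\eq^<(\cR)$ (your upper-semicontinuity argument $\beta=\inf\{2r:r\in R,\ \beta\leq r\}$, with the infimum attained by well-ordering, is a correct stand-in for Lemma \ref{lem:supsim}, using Theorem \ref{thm:simple}$(iv)$ so that $2r\in\eq(\cR)$), compute the exact value $SU(a/C)=F(2d(a,C))$ for every $1$-type, and then invoke the standard fact that ordinal-valued $SU$-rank on $1$-types gives supersimplicity via the Lascar inequalities. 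This buys a slightly sharper statement (exact ranks of all $1$-types, not just of the type over $\emptyset$) at the cost of importing the Lascar-inequality fact, which the paper's forking-chain route avoids; note also that your parenthetical justification ``forking is coordinate-wise'' is not what the Lascar argument uses, though it does give an alternative direct proof of finite local character (for each coordinate $a_i$, the infimum defining $2d(a_i,B)$ is attained in the well-ordered set $\eq(\cR^*)$ at a single point of $B$). Either way the argument closes, and the ``moreover'' clause falls out of your computation at $C=\emptyset$ exactly as in the paper.
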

\begin{proof}
Suppose $\Th(\cU_\cR)$ is not supersimple. Then there is an increasing chain $(B_i)_{i<\omega}$ of subsets of $\U_\cR$, and some $a\in\U_\cR$, such that $a\nind^f_{B_i}B_j$ for all $i<j$. Let $\alpha_i=2d(a,B_i)$. By Corollary \ref{cor:simple}, we have $\alpha_i<\alpha_j$ for all $i<j$, and so $(\alpha_i)_{i<\omega}$ forms a strictly decreasing sequence in $\eq(\cR^*)$. Since $\TR{\cR}$ is simple, we have the hypotheses of Lemma \ref{lem:supsim} with $n=2$. Therefore $\eq(\cR)$ is not well-ordered.

Conversely, suppose $(r_i)_{i<\omega}$ is a strictly decreasing sequence in $\eq(\cR)$. Then we may find $a\in\U_\cR$ and $b_i\in\U_\cR$, for $i<\omega$, such that $d(a,b_i)=r_i$. Given $i<\omega$, set $B_i=\{b_j:i\leq j\}$. Then, by Corollary \ref{cor:simple}, we have $a\nind^f_{B_i}B_j$ for all $i<j$. It follows that $\Th(\cU_\cR)$ is not supersimple.

Finally, suppose $\TR{\cR}$ is supersimple. Recall that there is a unique $1$-type in $S_1^{\cU_\cR}(\emptyset)$ and so, if we fix $a\in\U_\cR$, then $SU(\TR{\cR})=SU(a/\emptyset)$. To show $SU(a/\emptyset)$ is the order type of $\eq^<(\cR)$, it suffices by Lemma \ref{lem:supsim} to show $SU(a/\emptyset)\geq\mu$ if and only if $\mu$ embeds in $\eq(\cR^*)$ below $\sup R^*$. To accomplish this, we show by induction on $\mu$, that for any $B\subset\U_\cR$, $SU(a/B)\geq\mu$ if and only if $\mu$ embeds in $\eq(\cR^*)$ below $2d(a,B)$. 

The case when $\mu$ is a limit ordinal follows easily by induction and the fact that $\eq(\cR^*)$ is well-ordered. So suppose $\mu=\eta+1$. If $SU(a/B)\geq\mu$ then there is some $B_1\supset B$ such that $a\nind^f_B B_1$ and $SU(a/B_1)\geq\eta$. By induction, $\eta$ embeds in $\eq(\cR^*)$ below $2d(a,B_1)$. By Corollary \ref{cor:simple}, we have $2d(a,B_1)<2d(a,B)$, and so $\mu=\eta+1$ embeds in $\eq(\cR^*)$ below $2d(a,B)$.  Conversely, if $\mu$ embeds in $\eq(\cR^*)$ below $2d(a,B)$, then there is some $\alpha\in\eq(\cR^*)$ such that $\alpha<2d(a,B)$ and $\eta$ embeds in $\eq(\cR^*)$ below $\alpha$. Fix $b\in\U_\cR$ such that $d(a,b)=\alpha$ and set $B_1=Bb$. Then $\eta$ embeds in $\eq(\cR^*)$ below $2d(a,B_1)$, and so $SU(a/B_1)\geq\eta$ by induction. We also have $a\nind^f_B B_1$ by Corollary \ref{cor:simple}, and so $SU(a/B)\geq\eta+1=\mu$.
\end{proof}

Note, in particular, that if $\TR{\cR}$ is supersimple of $SU$-rank $1$ then it must be the case that $2r=\sup R$ for all $r\in R^{>0}$. Altogether, $\TR{\cR}$ is supersimple of $SU$-rank $1$ if and only if $\cR$ is metrically trivial.

Recall that, by Theorem \ref{thm:stable}, $\TR{\cR}$ is stable if and only if $\eq(\cR)=\cR$. This yields the following characterization of superstability.

\begin{theorem}\label{thm:supstab}
Suppose $\cR$ is a Urysohn monoid and $\TR{\cR}$ is stable. The following are equivalent.
\begin{enumerate}[$(i)$]
\item $\TR{\cR}$ is $\omega$-stable.
\item $\TR{\cR}$ is superstable.
\item $\cR$ is well-ordered.
\end{enumerate}
\end{theorem}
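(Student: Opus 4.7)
The implication $(i)\Rightarrow(ii)$ is a standard fact of classification theory. For $(ii)\Leftrightarrow(iii)$, observe that since $\TR{\cR}$ is stable, Theorem \ref{thm:stable} forces $\cR$ to be ultrametric, so $\eq(\cR)=R$. As stable implies simple, Theorem \ref{thm:supsim} applies and yields that $\TR{\cR}$ is supersimple if and only if $\eq(\cR)=R$ is well-ordered; combined with the standard fact that for stable theories supersimple coincides with superstable, this gives $(ii)\Leftrightarrow(iii)$. The nontrivial content of the theorem is therefore $(iii)\Rightarrow(i)$: assuming $\cR$ is ultrametric with $R$ countably well-ordered, I need to show $\TR{\cR}$ is $\omega$-stable by bounding $|S_1(A)|\leq\aleph_0$ for every countable $A\subset\MU{\cR}$.

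For this I would exploit the equivalence-relation structure of ultrametric spaces. Since $\cR$ is ultrametric, upper semicontinuity forces $\cR^*$ to be ultrametric as well, and so for each $r\in R$ the relation $d(y,z)\leq r$ is an equivalence relation on $\MU{\cR}$. To each $p\in S_1(A)$ I would associate the family $C_r(p):=\{a\in A:d(x,a)\leq r\}$ for $r\in R$. Each $C_r(p)$ is either empty or a single $r$-equivalence class of $A$; and since $d(x,a)=\inf\{r\in R:d(x,a)\leq r\}$ by density of $R$ in $R^*$, the family $\{C_r(p)\}_{r\in R}$ determines $p$ completely.

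The key structural observation is that $\{r\in R:C_r(p)\neq\emptyset\}$ is an upper set in the well-ordered $R$, so it is either empty or has a least element $r^*=r^*(p)\in R$. If empty, then $d(x,a)=\max R^*$ for every $a\in A$, accounting for at most one type. Otherwise the entire family $\{C_r(p)\}_{r\in R}$ is recoverable from the pair $(r^*,C_{r^*}(p))$: it is empty for $r<r^*$, and for $r\geq r^*$ it is the $r$-equivalence class of $A$ containing $C_{r^*}(p)$. Since $R$ is countable and $A$ has at most countably many $r^*$-equivalence classes, there are at most $\aleph_0$ such pairs, giving $|S_1(A)|\leq\aleph_0$.

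The main obstacle I expect is navigating the interface between $R$ and $R^*$. The well-ordering of $R$ is precisely what guarantees that in the nontrivial case above the least element $r^*$ lies in $R$, rather than merely in $R^*\setminus R$. I also need to verify that $(r^*,C_{r^*}(p))$ really does determine each individual distance $d(x,a)\in R^*$: for $a\in C_{r^*}(p)$, density and the minimality of $r^*$ force $d(x,a)=r^*$; for $a\notin C_{r^*}(p)$, fixing any $a_0\in C_{r^*}(p)$ and applying the ultrametric identity $d(x,a)=\max\{d(x,a_0),d(a_0,a)\}$ with $d(a_0,a)>r^*=d(x,a_0)$ forces $d(x,a)=d(a_0,a)$, a distance already present in $A$.
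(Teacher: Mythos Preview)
Your proof is correct and follows the same structure as the paper's: $(i)\Rightarrow(ii)$ is standard, $(ii)\Leftrightarrow(iii)$ is reduced to Theorems~\ref{thm:stable} and~\ref{thm:supsim} exactly as in the paper, and for $(iii)\Rightarrow(i)$ the paper simply invokes the folklore result on refining equivalence relations (with a pointer to \cite[Theorem~3.5.16]{Cothesis}), whereas you spell out that type-counting argument directly. Your explicit computation---recovering $p\in S_1(A)$ from the pair $(r^*,C_{r^*}(p))$ via the well-ordering of $R$ and the isosceles property of ultrametrics---is precisely the content of the reference the paper cites, so the approaches coincide.
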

\begin{proof}
$(ii)\Leftrightarrow(iii)$: By Theorems \ref{thm:stable}$(iv)$ and \ref{thm:supsim}.

$(i)\Rightarrow(ii)$: This is true for any theory.

$(iii)\Rightarrow(i)$: Suppose $\cR$ is well-ordered. Consider $\TR{\cR}$ as the theory of refining equivalence relations $d(x,y)\leq r$, indexed by $(R,\leq,0)$ (also commonly known as ``expanding equivalence relations"). This example is well-known in the folklore to be $\omega$-stable. The case $R=(\N,\leq,0)$ is credited to Shelah (see e.g. \cite{HyLe}). A general argument can be found in \cite[Theorem 3.5.16]{Cothesis}.
\end{proof}

\begin{corollary}\label{cor:undef}
Supersimplicity and superstability are not axiomatizable properties of $\mathbf{RUS}$.
\end{corollary}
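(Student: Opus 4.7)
The plan is to reduce both non-axiomatizability claims to the classical fact that the class of well-orderings is not $\cL_{\omega_1,\omega}$-axiomatizable among countable linear orders with a least element. Theorems \ref{thm:supsim} and \ref{thm:supstab} have already translated the target properties into well-orderedness of $\eq(\cR)$ or of $\cR$ itself, so the corollary should follow by exhibiting a family of Urysohn monoids along which any putative axiomatization would transfer back to the pure linear-order setting.

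The family I would use is $\cR_I := (I,\max,\leq,0)$ as $(I,\leq,0)$ ranges over countable linear orders with least element. Each $\cR_I$ is an ultrametric distance monoid, hence a Urysohn monoid, and satisfies $\eq(\cR_I)=\cR_I=I$. By Theorem \ref{thm:stable}, $\TR{\cR_I}$ is stable (in particular simple), so both Theorem \ref{thm:supsim} and Theorem \ref{thm:supstab} apply, yielding that $\TR{\cR_I}$ is supersimple if and only if $\TR{\cR_I}$ is superstable if and only if $I$ is well-ordered.

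Now suppose, for contradiction, that supersimplicity (or superstability) is witnessed in $\RUS$ by some $\cL_{\omega_1,\omega}$-sentence $\varphi$ in $\LDS$. On the subclass of ultrametric distance monoids, $\oplus$ is definable from $\leq$ by $z = x\oplus y \Leftrightarrow (z=x \lor z=y) \land x\leq z \land y\leq z$. Substituting this definition for each occurrence of $\oplus$ in $\varphi$ produces an $\cL_{\omega_1,\omega}$-sentence $\tilde\varphi$ in the signature $\{\leq,0\}$ such that, for every countable linear order $I$ with least element, $I\models\tilde\varphi$ if and only if $I$ is well-ordered.

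The main obstacle, and the one external input the argument relies on, is the classical non-axiomatizability of countable well-orderings in $\cL_{\omega_1,\omega}$ (a standard consequence of the Lopez--Escobar theorem together with the $\Pi^1_1$-completeness of the set of countable well-orderings in the space of countable linear orders). This contradicts the existence of $\tilde\varphi$ and closes the argument for both supersimplicity and superstability simultaneously, since the same family $\{\cR_I\}$ witnesses failure in both cases. Every remaining step is immediate from the characterizations already established in the paper.
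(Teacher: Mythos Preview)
Your proof is correct and follows essentially the same route as the paper: both reduce to the classical non-axiomatizability of well-orderings in $\cL_{\omega_1,\omega}$ via the family of ultrametric monoids $(I,\max,\leq,0)$. The paper first reduces supersimplicity to superstability (using that stability is finitely axiomatizable), then adjoins constants to pin down countability and invokes a boundedness result (any $\cL_{\omega_1,\omega}$ sentence whose models are all well-ordered has models of order type bounded by some $\mu<\omega_1$); you instead handle both properties simultaneously and make the passage to the pure order language $\{\leq,0\}$ explicit by eliminating $\oplus$ via its $\max$-definition. These are packaging differences over the same underlying argument.
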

\begin{proof}
Since stability is finitely axiomatizable, it is enough to show that superstability is not axiomatizable. Suppose, toward a contradiction, that there is an $(\LDS)_{\omega_1,\omega}$-sentence $\vphi$ such that, for any Urysohn monoid $\cR$, $\TR{\cR}$ is superstable if and only if $\cR\models\vphi$. After adding constants $(c_i)_{i<\omega}$ to $\LDS$, and conjuncting with $\oms{QE}$ along with a sentence axiomatizing distance monoids with universe $(c_i)_{i<\omega}$, we obtain an $(\LDS)_{\omega_1,\omega}$-sentence $\vphi^*$ such that, for any $\LDS$-structure $\cR$, $\cR\models\vphi^*$ if and only if $\cR$ is a countable, ultrametric, well-ordered, distance monoid. By classical results in infinitary logic (see e.g. \cite[Corollary 4.28]{Mainf}), it follows that there is some $\mu<\omega_1$ such that any model of $\vphi^*$ has order type at most $\mu$. This is clearly a contradiction, since any ordinal can be given the structure of an ultrametric distance monoid (c.f. Example \ref{allEX}(5)).
\end{proof}

On the other hand, for any $n<\omega$, ``$\TR{\cR}$ is supersimple of $SU$-rank $n$" is finitely axiomatizable in $\RUS$.

\section{Cyclic Indiscernible Sequences}\label{sec:NFSOP}

So far our results have been motivated by choosing a particular kind of good behavior for $\TR{\cR}$ and then characterizing when this behavior happens. In this section, we give a uniform upper bound for the complexity of $\TR{\cR}$ for any Urysohn monoid $\cR$. In particular, we will show that if $\cR$ is a Urysohn monoid then $\TR{\cR}$ does not have the finitary strong order property. We will accomplish this by proving the following theorem.

\begin{theorem}\label{thm:cycbound}
Suppose $\cR$ is a Urysohn monoid, and $\cI=(\abar^l)_{l<\omega}$ is an indiscernible sequence in $\MU{\cR}$ of tuples of possibly infinite length. If $|\NP(\cI)|=n<\omega$ then $\cI$ is $(n+1)$-cyclic.
\end{theorem}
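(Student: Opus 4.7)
The goal is to exhibit tuples $\bbar^1,\ldots,\bbar^{n+1}$ in $\MU{\cR}$ such that $\tp(\bbar^k,\bbar^{k+1})=p$ for every $k$ (with $k$ read cyclically in $\Z/(n{+}1)\Z$), where $p:=\tp(\abar^0,\abar^1)$. I would first reduce to the non-parameter coordinates: for $i\notin\NP(\cI)$ indiscernibility makes $a^l_i$ independent of $l$, and $\tp(\bbar^k,\bbar^{k+1})=p$ forces $b^k_i=b^{k+1}_i$, so these coordinates can all be fixed to the common value. Thus WLOG each tuple has length $n$, and $p$ is encoded in $\cR^*$ by the within-tuple distances $\alpha_{ij}:=d(a^0_i,a^0_j)$ and the cross distances $\beta_{ij}:=d(a^0_i,a^1_j)$.

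The strategy is to construct the cycle abstractly as a finite $\cR^*$-metric space on the vertex set $V:=\{(k,i):k\in\Z/(n{+}1)\Z,\ i\in[n]\}$ and then embed $V$ into $\MU{\cR}$ using the universality asserted in Proposition~\ref{saturated}. I would prescribe the forced distances $d((k,i),(k,j)):=\alpha_{ij}$ and $d((k,i),(k{+}1,j)):=\beta_{ij}$; when $n\ge 3$ the remaining, non-adjacent distances are set to the shortest-path length in the weighted graph of forced edges, which automatically satisfies the triangle inequality. The images of the rows $\{(k,i):i\in[n]\}$ under the embedding then give the desired tuples $\bbar^k$, and the consecutive-pair distance profile gives $\tp(\bbar^k,\bbar^{k+1})=p$ cyclically by quantifier elimination.

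The main task is to verify that $V$ is an honest $\cR^*$-metric space, which amounts to showing that no alternative path in the forced-edge graph is strictly shorter than a direct forced edge. The key device is indiscernibility: extending $\cI$ to $(\abar^l)_{l\in\Z}$ by compactness, one has $d(a^0_i,a^m_j)=\beta_{ij}$ for every $m\ge 1$, and in particular the triangle $a^0_i,a^1_j,a^2_k$ yields the chain inequality $\beta_{ik}\le\beta_{ij}\p\beta_{jk}$; iterated triangle inequality in $\MU{\cR}$ then bounds the length of any lifted linear path from below by the corresponding direct distance, while upper semi-continuity of $\p$ in $\cR^*$ is used to propagate these bounds across suprema and infima. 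The main obstacle is the forced-distance case $n=2$: here the $3$-cycle has no non-adjacent pairs and every required triangle inequality $\beta_{ki}\le\beta_{ij}\p\beta_{jk}$ must come directly from a $012$-triangle $a^0_{i'},a^1_{j'},a^2_{k'}$ with a suitable "returning" index pattern (for instance $(i',j',k')=(1,2,1)$ gives the side inequality $\beta_{21}\le\beta_{12}\p\beta_{11}$). The combinatorics closes precisely because $|\NP(\cI)|=n$ bounds the index variation, and for general $n$ analogous returning configurations in longer sub-chains of the sequence, together with upper semi-continuity, supply the wrap-around constraints that make $V$ into a valid $\cR^*$-metric space.
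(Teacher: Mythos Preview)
Your proposal is correct and follows essentially the same approach as the paper: the shortest-path construction on $V$ is the content of Lemma~\ref{lem:cycchar}, the chain inequality $\beta_{ik}\le\beta_{ij}\p\beta_{jk}$ and the ``returning configuration'' trick are Lemma~\ref{lem:cycind}(a) and (b) respectively, and your observation that $|\NP(\cI)|=n$ bounds the index variation is exactly the pigeonhole step in the paper's proof. The only point worth tightening is the wrap-around verification for general $n$, which you leave somewhat impressionistic; the paper handles this uniformly by showing (Lemma~\ref{lem:cycind}(b)) that any repeated index among $i_1,\ldots,i_{n+1}$ lets one unwind the cyclic inequality into a linear one inside the $\Z$-indexed sequence.
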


By definition of $\FSOP$, we will then obtain the following corollary.

\begin{corollary}\label{cor:NFSOP}
If $\cR$ is a Urysohn monoid then $\TR{\cR}$ is $\NFSOP$.
\end{corollary}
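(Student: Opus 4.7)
The plan is to exhibit realizations $\bar b^1,\dots,\bar b^{n+1}$ inside $\MU{\cR}$ for which every cyclically consecutive pair realizes $p:=\tp(\bar a^0,\bar a^1)$. I would first reduce to the case where the tuples carry no parameter coordinates: the coordinates outside $\NP(\cI)$ are literally constant along $\cI$, so gather those singletons into a fixed parameter set $C\subset\MU{\cR}$ and regard $\bar a^l$ as an $n$-tuple indexed by $I:=\NP(\cI)$. Set $\alpha_{ij}:=d(a^0_i,a^0_j)$, $\beta_{ij}:=d(a^0_i,a^1_j)$, and $\delta_{ic}:=d(a^0_i,c)$. By quantifier elimination (Theorem~\ref{thm:QE}) together with the $\kappa^+$-universality and $\kappa$-homogeneity of $\MU{\cR}$ as an $\cR^*$-metric space (Proposition~\ref{saturated}), it suffices to exhibit an $\cR^*$-metric space
$X=C\cup\bigsqcup_{k=1}^{n+1}B_k$, $B_k=\{b^k_i:i\in I\}$,
whose distances satisfy $d(b^k_i,b^k_j)=\alpha_{ij}$, $d(b^k_i,c)=\delta_{ic}$, and $d(b^k_i,b^{k+1\bmod (n+1)}_j)=\beta_{ij}$.

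For pairs $(B_k,B_l)$ at cyclic distance $\geq 2$ (which appear only when $n\geq 3$), I would assign the remaining distances using a \emph{cyclic orientation}: say $l$ is \emph{ahead of} $k$ when the forward arc from $k$ to $l$ in the $(n+1)$-cycle is no longer than the backward arc (break ties arbitrarily in the antipodal case), and set $d(b^k_i,b^l_j)=\beta_{ij}$ when $l$ is ahead of $k$ and $d(b^k_i,b^l_j)=\beta_{ji}$ otherwise. The consecutive pairs are handled correctly by this rule, as is the wrap-around pair $(B_{n+1},B_1)$.

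The key step is verifying that this assignment satisfies every triangle inequality, making $X$ a genuine $\cR^*$-metric space. By compactness, extend $\cI$ to a $\Z$-indexed indiscernible sequence $(\bar c^l)_{l\in\Z}$ with the same EM-type, so that $d(c^l_i,c^m_j)=\beta_{ij}$ for $l<m$ and $\beta_{ji}$ for $l>m$. Any triangle in $X$ among points $b^{k_1}_{i},b^{k_2}_{j},b^{k_3}_{h}$ acquires a side pattern in the alphabet $\{\beta_{\bullet\bullet}\}$ determined by the orientation data of the three pairs $(k_r,k_s)$. For each of the six possible orderings of three integer indices $l_1,l_2,l_3\in\Z$, the corresponding triangle on $(c^{l_1}_{i},c^{l_2}_{j},c^{l_3}_{h})$ inside $\MU{\cR}$ realizes one of the six symbolically distinct side patterns, and matching the orientation of $(k_1,k_2,k_3)$ to the appropriate ordering of $(l_1,l_2,l_3)$ transfers the triangle inequality from $\MU{\cR}$ to $X$. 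Triangles with at most two distinct blocks (possibly together with $C$) reduce immediately to triangles in the subspace $\bar a^0\cup\bar a^1\cup C$ of $\MU{\cR}$ and are automatic.

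The main obstacle is the ``twisted'' cyclic triangle in which the orientation pattern is $\beta_{ij}\oplus\beta_{jh}\oplus\beta_{hi}$ with pairwise distinct $i,j,h\in I$, since no single three-point configuration in $(\bar c^l)_{l\in\Z}$ realizes this exact pattern. I would handle these by combining two or three inequalities coming from the $\Z$-extended sequence: one uses that the pair-triangle inequalities applied to four-point configurations such as $(c^{-m}_j,c^0_i,c^m_j,c^{m'}_h)$ force the algebraic relations
$|\beta_{ij}\m\beta_{ji}|\leq\beta_{ii}$, $\beta_{hj}\leq\beta_{ji}\oplus\beta_{ih}\oplus\beta_{jj}$, etc.,
which collectively entail every required cyclic inequality among the $\beta$'s. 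Once $X$ is a valid $\cR^*$-metric space, Proposition~\ref{saturated} embeds it isometrically over $C$ into $\MU{\cR}$, producing $\bar b^1,\dots,\bar b^{n+1}$ as required.
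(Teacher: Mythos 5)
There is a genuine gap, and it is fatal to the construction rather than a missing detail. Notice first that your verification never actually uses the hypothesis $|\NP(\cI)|=n$ except to decide how many blocks to build; but that hypothesis must be the engine of any proof, since Theorem \ref{thm:SOR} shows there are Urysohn monoids (any $\cR$ with $\arch(\cR)\geq 3$, e.g.\ $\cR_3$ or $\cQ$) whose indiscernible sequences need not be $m$-cyclic for small $m$. Concretely, your orientation-based completion over-specifies the metric: at a ``twisted'' triple of blocks it forces the inequality $\beta_{hi}\leq\beta_{ij}\p\beta_{jh}$ for \emph{distinct} $i,j,h$, which is exactly one of the $3$-cyclicity inequalities of Lemma \ref{lem:cycchar}, and these are simply false in general. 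For instance, by Lemma \ref{lem:SOseq} applied in $\cR_3$ with $\alpha_1=\alpha_2=\alpha_3=1$ there is an indiscernible sequence of triples with $\beta_{12}=\beta_{23}=1$ and $\beta_{31}=3>\beta_{12}\p\beta_{23}$, and here $\NP(\cI)=\{1,2,3\}$, so $n=3$ and you must build $4$ blocks. Twisted block-triples cannot be avoided by tie-breaking: the consecutive pairs already form a directed $(n+1)$-cycle in your orientation tournament, and any tournament containing a directed cycle contains a directed $3$-cycle; placing the indices $1,2,3$ around such a triple forces the failed inequality $3\leq 1\p 1$, so your space $X$ is not an $\cR^*$-metric space. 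Consequently no combination of inequalities extracted from the $\Z$-indexed sequence can ``collectively entail every required cyclic inequality'': if that entailment held, every indiscernible sequence would be $3$-cyclic and every $\TR{\cR}$ would be $\NSOP_3$, contradicting Theorem \ref{thm:SOR}.

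The fix is the route the paper takes: do not pin down the non-consecutive distances at all. Lemma \ref{lem:cycchar} (the translation of the pseudometric-extension argument) specifies only the within-block and consecutive-block distances and completes by shortest paths, which reduces $(n+1)$-cyclicity to the chain inequalities $\epsilon_{i_{n+1},i_1}\leq\epsilon_{i_1,i_2}\p\cdots\p\epsilon_{i_n,i_{n+1}}$ over all $(n+1)$-tuples of indices. This is where finiteness of $\NP(\cI)$ enters, via pigeonhole: by Lemma \ref{lem:cycind}$(c)$ one may assume all $i_s$ lie in $\NP(\cI)$, so among $n+1$ indices two coincide, and Lemma \ref{lem:cycind}$(b)$ gives the inequality. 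Your reduction of the constant coordinates to a parameter set $C$ and your treatment of the wrap-around pair are fine, but the heart of the argument has to be this repetition of indices, not a free-style completion of the cycle.
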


The proof of Theorem \ref{thm:cycbound} is obtained by translating \cite[Section 3.1]{CoTe} (joint work with Caroline Terry), in which it is shown that the complete Urysohn sphere in continuous logic does not have  the \emph{fully} finitary strong order property. This property is stronger than the finitary strong order property, and so, after translating back to the setting of \cite{CoTe}, the work in this section also slightly strengthens the results of \cite{CoTe}. A translation guide is given in \ref{app:FD}.

For the rest of the section, we fix a Urysohn monoid $\cR$. The key tool we use to prove Theorem \ref{thm:cycbound} is the following test for when an indiscernible sequence in $\MU{\cR}$ is $n$-cyclic. 

\begin{lemma}\label{lem:cycchar}
Let $\cR$ be a Urysohn monoid, and $(\abar^l)_{l<\omega}$ an indiscernible sequence in $\MU{\cR}$. Given $i,j\in\ell(\abar^0)$, set $\epsilon_{i,j}=d(a^0_i,a^1_j)$. Given $n\geq 2$, $(\abar^l)_{l<\omega}$ is $n$-cyclic if and only if, for all $i_1,\ldots,i_n\in\ell(\abar^0)$, $\epsilon_{i_n,i_1}\leq\epsilon_{i_1,i_2}\p\epsilon_{i_2,i_3}\p\ldots\p\epsilon_{i_{n-1},i_n}$.
\end{lemma}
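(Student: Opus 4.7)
The plan is: by quantifier elimination and the $\kappa^+$-universality of $\MU{\cR}$ as an $\cR^*$-metric space (Proposition \ref{saturated}), witnessing $n$-cyclicity of $\cI$ reduces to constructing an $\cR^*$-metric on $X:=\{1,\ldots,n\}\times\ell(\abar^0)$ satisfying $D((k,i),(k,j))=d(a^0_i,a^0_j)$ on same-copy pairs and $D((k,i),(k+1\bmod n,j))=\epsilon_{i,j}$ on cyclically adjacent pairs. Abbreviate the former as $\delta_{i,j}$. The forward direction is then immediate from the triangle inequality: given witnesses $\bbar^1,\ldots,\bbar^n$, for any $i_1,\ldots,i_n$ walk along $b^1_{i_1},b^2_{i_2},\ldots,b^n_{i_n}$; since $(\bbar^n,\bbar^1)\equiv(\abar^0,\abar^1)$ the closing distance equals $\epsilon_{i_n,i_1}$ and each intermediate distance equals $\epsilon_{i_k,i_{k+1}}$, producing the cyclic inequality.

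For the converse, suppose the cyclic inequality holds and define $D$ on $X$ to be the shortest-path distance in the graph where same-copy pairs are edges of weight $\delta_{i,j}$ and cyclically adjacent pairs are edges of weight $\epsilon_{i,j}$. Upper semicontinuity of $\p$ in $\cR^*$ ensures $D$ is $R^*$-valued, and symmetry and subadditivity are automatic; the only substantive point is verifying that the infimum on each direct edge equals its prescribed weight, i.e.\ that the cycle admits no shortcut. Once this is done, one quotients by distance-zero classes to obtain a genuine $\cR^*$-metric, realizes it inside $\MU{\cR}$, and sets $\bbar^k$ to be the image of $\{(k,i):i\in\ell(\abar^0)\}$.

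To rule out shortcuts, extend $\cI$ to a $\Z$-indexed indiscernible sequence by compactness (order-indiscernibility makes $d(a^l_i,a^{l'}_j)$ depend only on $i$, $j$, and $\textnormal{sign}(l'-l)$), and lift any candidate path $v_0,\ldots,v_L$ in $X$ to $a^{L(0)}_{i_0},\ldots,a^{L(L)}_{i_L}$ in $\MU{\cR}$, where $L(s)-L(s-1)$ is $0$, $+1$, or $-1$ according as $v_{s-1}v_s$ is same-copy, forward-adjacent, or backward-adjacent. Each edge weight in $X$ then agrees with the corresponding monster distance, so the monster triangle inequality yields the lower bound $d(a^{L(0)}_{i_0},a^{L(L)}_{i_L})$ on the path weight.

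The main obstacle is that for a same-copy endpoint path with net shift $rn\neq0$, this bound only gives $\epsilon_{i_0,i_L}$ (if $r>0$) or $\epsilon_{i_L,i_0}$ (if $r<0$), which can be strictly less than $\delta_{i_0,i_L}$. This is precisely where the cyclic hypothesis is applied: for a single forward wrap $(k,i),(k+1,j_1),\ldots,(k-1,j_{n-1}),(k,j)$ of weight $\epsilon_{i,j_1}\p\epsilon_{j_1,j_2}\p\ldots\p\epsilon_{j_{n-1},j}$, specializing the hypothesis to $(i_1,\ldots,i_n)=(j_1,\ldots,j_{n-1},j)$ gives $\epsilon_{j,j_1}\leq\epsilon_{j_1,j_2}\p\ldots\p\epsilon_{j_{n-1},j}$, whence the total weight dominates $\epsilon_{i,j_1}\p\epsilon_{j,j_1}$, which dominates $\delta_{i,j}$ by the monster triangle inequality $d(a^0_i,a^0_j)\leq d(a^0_i,a^1_{j_1})\p d(a^1_{j_1},a^0_j)$. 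Paths with $|r|>1$ or interleaved backward steps reduce to this case by induction, each inductive step using the cyclic condition to collapse one excess forward wrap and the monster triangle inequality to absorb a forward-backward detour; adjacent-edge shortcuts are handled analogously, completing the verification that $D$ realizes the prescribed distances.
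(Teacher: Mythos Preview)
Your approach is correct and essentially coincides with the paper's: both directions rest on the same shortest-path pseudometric construction the paper describes (citing \cite[Lemma 3.7]{CoTe}), and your lift to a $\Z$-indexed extension together with the wrap-collapsing induction is a clean way to carry out the ``no shortcut'' verification that the paper leaves implicit. One small imprecision: that $D$ is $R^*$-valued comes from Dedekind completeness of $R^*$, while upper semicontinuity is what you need for subadditivity of the infimum (as the paper remarks); otherwise the argument is sound.
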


The proof of this result is a direct translation of \cite[Lemma 3.7]{CoTe} to the setting of discrete logic and generalized metric spaces. The only detail requiring special attention is the following. In the proof of  \cite[Lemma 3.7]{CoTe}, we use that fact that, given a partial symmetric function $f:\dom(f)\seq X\times X\func[0,1]$ (where $X$ is some set), $f$ can be extended to a pseudometric on $X$ if and only if, for all $m>0$ and for all $z_0,z_1,\ldots,z_m\in X$, if $(z_0,z_m)\in\dom(f)$ and $(z_i,z_{i+1})\in\dom(f)$ for all $0\leq i<m$, then
$$
f(z_0,z_m)\leq f[\zbar]:=f(z_0,z_1)+ f(z_1,z_2)+\ldots+ f(z_{m-1},z_m).
$$
To see this, note that the forward direction follows from the triangle inequality and, for the reverse direction, define the pseudometric $d_f:X\times X\func [0,1]$ such that $d_f(x,y)$ is the infimum over all values $f[\zbar]$, where $\zbar=(z_0,\ldots,z_m)$ is such that $z_0=x$, $z_m=y$, and  $(z_i,z_{i+1})\in\dom(f)$ for all $0\leq i<m$.

The analog of this fact can be used to characterize when a  partial symmetric function $f:\dom(f)\seq X\times X\func R^*$ can be extended to an $\cR^*$-pseudometric on $X$. The only subtlety is that upper semicontinuity in $\cR^*$ is necessary to prove that the function $d_f$ described above satisfies the triangle inequality.

It is also worth noting that \cite[Lemma 3.7]{CoTe} is only stated for indiscernible sequences of tuples of finite length. But this assumption is not used in the proof.

The final tools needed for Theorem \ref{thm:cycbound} are the following observations concerning transitivity properties of indiscernible sequences.   

\begin{lemma}\label{lem:cycind}
Suppose $\cI=(\abar^l)_{l<\omega}$ is an indiscernible sequence in $\MU{\cR}$. Given $i,j\in\ell(\abar^0)$, set $\epsilon_{i,j}=d(a^0_i,a^1_j)$. Fix $n\geq 2$ and $i_1,\ldots,i_n\in\ell(\abar^0)$.
\begin{enumerate}[$(a)$]
\item $\epsilon_{i_1,i_n}\leq\epsilon_{i_1,i_2}\p\epsilon_{i_2,i_3}\p\ldots\p\epsilon_{i_{n-1},i_n}$.
\item If $i_s=i_t$ for some $1\leq s<t\leq n$, then $\epsilon_{i_n,i_1}\leq\epsilon_{i_1,i_2}\p\epsilon_{i_2,i_3}\p\ldots\p\epsilon_{i_{n-1},i_n}$.
\item If $i_s\not\in\NP(\cI)$ for some $1\leq s\leq n$, then $\epsilon_{i_n,i_1}\leq\epsilon_{i_1,i_2}\p\epsilon_{i_2,i_3}\p\ldots\p\epsilon_{i_{n-1},i_n}$.
\end{enumerate}
\end{lemma}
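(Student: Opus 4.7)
The plan is to prove all three parts by constructing explicit chains in $\MU{\cR}$ and applying the iterated triangle inequality, with indiscernibility of pairs identifying each consecutive distance as one of the $\epsilon_{i,j}$. For part~$(a)$, I would take the chain $(a^0_{i_1}, a^1_{i_2}, \ldots, a^{n-1}_{i_n})$ with strictly increasing positions; indiscernibility gives $d(a^{k-1}_{i_k}, a^k_{i_{k+1}}) = \epsilon_{i_k,i_{k+1}}$ and $d(a^0_{i_1}, a^{n-1}_{i_n}) = \epsilon_{i_1,i_n}$, and the triangle inequality closes the argument.

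The shared ingredient for $(b)$ and $(c)$ is a two-part chain with strictly decreasing positions in each half, joined in the middle at the coordinate $i_s$; the role of the hypotheses is to control the cost of that join. For $(c)$, since $i_s \notin \NP(\cI)$ forces $a^l_{i_s}$ to be constant in $l$ by indiscernibility, the two halves may be joined at $i_s$ across arbitrary positions at zero cost, so I would use
\[
a^{n-s}_{i_n},\ a^{n-s-1}_{i_{n-1}},\ \ldots,\ a^0_{i_s},\ \ a^n_{i_s},\ a^{n-1}_{i_{s-1}},\ \ldots,\ a^{n-s+1}_{i_1}.
\]
Reading each consecutive distance as an $\epsilon$ via indiscernibility (the smaller position of each pair supplying the first subscript) yields $\epsilon_{i_k,i_{k+1}}$ for $k = s, \ldots, n-1$ in the first half and $k = 1, \ldots, s-1$ in the second half; the middle join costs $0$; and the endpoints, at positions $n-s < n-s+1$ with indices $i_n$ and $i_1$, have distance $\epsilon_{i_n,i_1}$. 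The triangle inequality yields $(c)$.

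For $(b)$, I reuse the architecture but the join now carries genuine cost $\epsilon_{i_s,i_s}$, since only the coordinate equality $i_s = i_t$ is available. To absorb this cost, I first apply $(a)$ to the sub-tuple $(i_s, i_{s+1}, \ldots, i_t)$, giving $\epsilon_{i_s,i_t} \leq \epsilon_{i_s,i_{s+1}} \p \ldots \p \epsilon_{i_{t-1},i_t}$; since $i_s = i_t$ this reads $\epsilon_{i_s,i_s} \leq \sum_{k=s}^{t-1} \epsilon_{i_k,i_{k+1}}$. Crucially, I then truncate the first half at $i_t$ rather than at $i_s$ and resume the second half at $i_s$:
\[
a^{n-t}_{i_n},\ \ldots,\ a^0_{i_t},\ \ a^{n-t+s}_{i_s},\ \ldots,\ a^{n-t+1}_{i_1}.
\]
The first half now contributes $\sum_{k=t}^{n-1} \epsilon_{i_k,i_{k+1}}$, the join contributes $\epsilon_{i_s,i_s}$, and the second half contributes $\sum_{k=1}^{s-1} \epsilon_{i_k,i_{k+1}}$; the preliminary bound on the join supplies exactly the missing block $k = s, \ldots, t-1$. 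The step I expect to need the most care is this design choice in $(b)$ — terminating the first half at $i_t$ rather than at $i_s$ — because it is what makes the join cost fit precisely into the gap in the edge-sum rather than double-counting or falling short; beyond that, verifying position orderings for each consecutive pair is routine bookkeeping.
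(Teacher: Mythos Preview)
Your proof is correct in all three parts. The chain constructions are carefully designed, the position bookkeeping checks out (in particular, the join positions $0$ and $n-t+s$ in part $(b)$ are distinct since $s\geq 1$ and $t\leq n$), and the use of $i_s=i_t$ to identify the join cost $\epsilon_{i_t,i_s}$ with $\epsilon_{i_s,i_s}=\epsilon_{i_s,i_t}$ so that the bound from $(a)$ applies is exactly right.

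Your approach differs from the paper's in organization. For parts $(a)$ and $(b)$ the paper simply cites \cite[Lemma 3.5]{CoTe}, so your self-contained chain argument is in fact more explicit than what appears here. For part $(c)$ the paper takes a different route: rather than building a single chain with a zero-cost join, it first observes the symmetry $\epsilon_{i_s,j}=\epsilon_{j,i_s}$ for all $j$ (from $a^0_{i_s}=a^2_{i_s}$ and indiscernibility of the pair $(\abar^1,\abar^2)\equiv(\abar^0,\abar^1)$), and then reduces entirely to repeated applications of $(a)$: namely $\epsilon_{i_n,i_1}\leq\epsilon_{i_n,i_s}\p\epsilon_{i_s,i_1}=\epsilon_{i_1,i_s}\p\epsilon_{i_s,i_n}\leq\epsilon_{i_1,i_2}\p\ldots\p\epsilon_{i_{n-1},i_n}$. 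This is slightly slicker conceptually---the symmetry is the whole insight---whereas your approach has the virtue of treating $(b)$ and $(c)$ uniformly via the same join architecture.
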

\begin{proof}
For parts $(a)$ and $(b)$, see \cite[Lemma 3.5]{CoTe}.

Part $(c)$. First, if $i_s\not\in\NP(\cI)$ then $a^0_{i_s}=a^2_{i_s}$. Therefore, for any $j\in\ell(\abar^0)$, we have
$\epsilon_{i_s,j}=d(a^0_{i_s},a^1_j)=d(a^2_{i_s},a^1_j)=d(a^0_j,a^1_{i_s})=\epsilon_{j,i_s}$. So if $s=1$ or $s=n$ then the result follows immediately from part $(a)$. Suppose $1<s<n$. Then, using part $(a)$, we have $
\epsilon_{i_n,i_1} \leq \epsilon_{i_n,i_s}\p\epsilon_{i_s,i_1} = \epsilon_{i_1,i_s}\p\epsilon_{i_s,i_n} = \epsilon_{i_1,i_2}\p\ldots\p\epsilon_{i_{n-1},i_n}$. 
\end{proof}

We can now prove the main result of this section.

\begin{proof}[Proof of Theorem \ref{thm:cycbound}]
Let $\cR$ be a Urysohn monoid and fix an indiscernible sequence $\cI$ in $\MU{\cR}$, with $|\NP(\cI)|=n<\omega$. We want to show $\cI$ is $(n+1)$-cyclic. We may assume $n\geq 1$ and so, by Lemma \ref{lem:cycchar}, it suffices to fix $i_1,\ldots,i_{n+1}\in\ell(\abar^0)$ and show $\epsilon_{i_{n+1},i_1}\leq\epsilon_{i_1,i_2}\p\epsilon_{i_2,i_3}\p\ldots\p\epsilon_{i_n,i_{n+1}}$. By Lemma \ref{lem:cycind}$(c)$, we may assume $i_s\in\NP(\cI)$ for all $1\leq s\leq n+1$. Therefore, there are $1\leq s<t\leq n+1$ such that $i_s=i_t$, and so the result follows from Lemma \ref{lem:cycind}$(b)$. 
\end{proof}

\section{Strong Order Rank}\label{sec:SOR}

Suppose $\cR$ is a Urysohn monoid. Summarizing previous results, we have shown that $\TR{\cR}$ never has the finitary strong order property and, moreover, stability and simplicity are both possible. In this section, we address the complexity between simplicity and $\NFSOP$. For general first-order theories, this complexity is stratified by Shelah's $\SOP_n$-hierarchy, which we have formulated as \emph{strong order rank} (Definition \ref{def:SOrank}).

\subsection{Calculating the Rank} 

The results of Sections \ref{sec:stable} and \ref{sec:simple} can be restated as follows:
\begin{enumerate}[$(i)$]
\item $\TR{\cR}$ is stable if and only if for all $r,s\in R$, if $r\leq s$ then $r\p s=s$.
\item $\TR{\cR}$ is simple if and only if for all $r,s,t\in R$, if $r\leq s\leq t$ then $r\p s \p t=s\p t$.
\end{enumerate}

This motivates the following definition.

\begin{definition}\label{def:arch}
The \textbf{archimedean complexity} of a distance monoid $\cR$, denoted $\arch(\cR)$, is the minimum $n<\omega$ such that $r_0\p r_1\p\ldots\p r_n=r_1\p\ldots\p r_n$ for all $r_0\leq r_1\leq \ldots\leq r_n$ in $R$. If no such $n$ exists, set $\arch(\cR)=\omega$. 
\end{definition}

Roughly speaking, $\arch(\cR)$ measures when, if ever, addition in $\cR$ begins to stabilize. For example, if $\arch(\cR)\leq n$ then $nr\in\eq(\cR)$ for all $r\in R$ (and so $\cR$ satisfies the hypotheses of Lemma \ref{lem:supsim}). In the case that $\cR$ is archimedean, this can be stated in a more precise fashion.

\begin{definition}
A distance monoid $\cR$ is \textbf{archimedean} if, for all $r,s\in R^{>0}$, there is some $n>0$ such that $s\leq nr$ (i.e. $s\preceq r$).
\end{definition}

If $\cR$ is an archimedean distance monoid, then $\arch(\cR)$ provides a uniform upper bound for the number of times any given positive element of $\cR$ must be added to itself in order to surpass any other element of $\cR$. In other words, $\arch(\cR)\leq n$ if and only if $s\leq nr$ for all $r,s\in R^{>0}$ (i.e. $nr=\sup R$ for any $r\in R^{>0}$). 

We have shown that, for general Urysohn monoids $\cR$, $\TR{\cR}$ is stable if and only if $\arch(\cR)\leq 1$, and so, by Fact \ref{fact:SOP}, $\SO(\cU_\cR)\leq 1$ if and only if $\arch(\cR)\leq 1$. The goal of this section is to extend this result, and show $\SO(\cU_\cR)=\arch(\cR)$ for any Urysohn monoid $\cR$. First, we note the following observation, which says that there is no loss in considering the archimedean complexity of $\cR^*$ rather than that of $\cR$. The proof, which uses the continuity properties of Theorem \ref{thm:R*}$(b)$, is left to the reader. 

\begin{proposition}\label{prop:same}
If $\cR$ is a countable distance monoid then $\arch(\cR^*)=\arch(\cR)$.
\end{proposition}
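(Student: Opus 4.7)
The plan is to prove $\arch(\cR)\leq\arch(\cR^*)$ and $\arch(\cR^*)\leq\arch(\cR)$ separately. The first inequality is immediate: since $R\seq R^*$ and the monoid operation on $\cR^*$ restricts correctly to $R$, any increasing $R$-tuple witnessing $\arch(\cR)>n$ is also an $R^*$-tuple witnessing $\arch(\cR^*)>n$. So the content lies in the reverse inequality, which I would prove under the assumption $\arch(\cR)=n<\omega$; the case $\arch(\cR)=\omega$ is already handled by the first inequality.

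To show $\arch(\cR^*)\leq n$, I fix $\alpha_0\leq\alpha_1\leq\ldots\leq\alpha_n$ in $R^*$ and aim to prove $\alpha_0\p\alpha_1\p\ldots\p\alpha_n=\alpha_1\p\ldots\p\alpha_n$. The $\geq$ direction is immediate from $0\leq\alpha_0$ and translation invariance. The $\leq$ direction is the heart of the argument: I would approximate from above by $R$-tuples using the iterated form of upper semicontinuity (obtained from part $(iii)$ of the displayed proposition by induction on $k$), namely
\begin{equation*}
\beta_0\p\ldots\p\beta_k=\inf\{s_0\p\ldots\p s_k:s_i\in R,\ \beta_i\leq s_i\},
\end{equation*}
valid for any $\beta_0,\ldots,\beta_k\in R^*$.

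The key technical step is a reduction: when $\beta_0\leq\ldots\leq\beta_k$, the infimum above is unchanged if restricted to \emph{nondecreasing} $R$-tuples. Given any $(s_0,\ldots,s_k)$ with $\beta_i\leq s_i$, define $s_i':=\min\{s_i,s_{i+1},\ldots,s_k\}$; this tuple is nondecreasing in $i$, still satisfies $\beta_i\leq s_i'$ (since each $s_j$ with $j\geq i$ satisfies $s_j\geq\beta_j\geq\beta_i$), and has $s_i'\leq s_i$, so by translation invariance $s_0'\p\ldots\p s_k'\leq s_0\p\ldots\p s_k$. Applying this reduction to $(\alpha_0,\ldots,\alpha_n)$ turns the displayed infimum into one over \emph{increasing} $R$-tuples; invoking $\arch(\cR)\leq n$ then collapses each term $r_0\p r_1\p\ldots\p r_n$ to $r_1\p\ldots\p r_n$; and since any increasing $R$-tuple $(r_1,\ldots,r_n)$ with $\alpha_i\leq r_i$ admits the valid extension $r_0:=r_1$ (using $\alpha_0\leq\alpha_1\leq r_1$), the resulting infimum agrees with $\inf\{r_1\p\ldots\p r_n:r_i\in R,\ \alpha_i\leq r_i,\ r_1\leq\ldots\leq r_n\}$, which by the same reduction applied in reverse to $(\alpha_1,\ldots,\alpha_n)$ equals $\alpha_1\p\ldots\p\alpha_n$.

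The main obstacle I anticipate is establishing the reduction to nondecreasing $R$-tuples cleanly; it is precisely here that the assumption $\alpha_0\leq\ldots\leq\alpha_n$ enters, since otherwise the truncated-minimum tuple could drop below some $\alpha_i$. Beyond that, the argument is a chain of equalities organized by upper semicontinuity.
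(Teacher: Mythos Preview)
Your proof is correct. The paper actually leaves this proof to the reader, saying only that it ``uses the continuity properties of Theorem~\ref{thm:R*}$(b)$''; your argument via iterated upper semicontinuity and the reduction to nondecreasing approximating $R$-tuples is exactly the intended route.
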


For the rest of the section, we fix a Urysohn monoid $\cR$. Toward the proof of the main result of this section (Theorem \ref{thm:SOR}), we begin by refining previous results on cyclic indiscernible sequences.

\begin{definition}
Fix $n\geq 2$ and $\alpha_1,\ldots,\alpha_n\in R^*$. Let $\albar=(\alpha_1,\ldots,\alpha_n)$.
\begin{enumerate}
\item $\albar$ is a \textbf{diagonally indiscernible tuple} if there is an indiscernible sequence $(\abar^l)_{l<\omega}$ in $\MU{\cR}$, with $\ell(\abar^0)=n$, such that $d(a^1_1,a^0_n)=\alpha_n$ and, for all $1\leq t<n$, $d(a^0_t,a^1_{t+1})=\alpha_t$.

\item $\albar$ is \textbf{transitive} if $\alpha_n\leq\alpha_1\p\ldots\p\alpha_{n-1}$.
\end{enumerate}
\end{definition}

\begin{proposition}\label{prop:SOcyc}
Given $n\geq 2$, $\SO(\cU_\cR)\leq n-1$ if and only if every diagonally indiscernible tuple in $(R^*)^n$ is transitive.
\end{proposition}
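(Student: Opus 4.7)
The plan is to reduce the statement to Lemma \ref{lem:cycchar}, which already reformulates $n$-cyclicity of an indiscernible sequence in terms of inequalities on the distances $\epsilon_{i,j}=d(a^0_i,a^1_j)$. Recall that $\SO(\cU_\cR)\leq n-1$ means, by Definition \ref{def:SOrank}, that every indiscernible sequence $\cI=(\abar^l)_{l<\omega}$ in $\MU{\cR}$ is $n$-cyclic; by Lemma \ref{lem:cycchar}, this is equivalent to the assertion that for every such $\cI$ and every choice of indices $i_1,\ldots,i_n\in\ell(\abar^0)$, one has $\epsilon_{i_n,i_1}\leq\epsilon_{i_1,i_2}\p\cdots\p\epsilon_{i_{n-1},i_n}$. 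Thus, the proposition becomes essentially a bookkeeping identification between this inequality and the transitivity condition from the definition of a diagonally indiscernible tuple.

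For the forward direction, assume $\SO(\cU_\cR)\leq n-1$ and fix a diagonally indiscernible tuple $\albar=(\alpha_1,\ldots,\alpha_n)$. By definition, there is an indiscernible sequence $(\abar^l)_{l<\omega}$ with $\ell(\abar^0)=n$, realizing $\epsilon_{t,t+1}=\alpha_t$ for $1\leq t<n$ and $\epsilon_{n,1}=d(a^0_n,a^1_1)=d(a^1_1,a^0_n)=\alpha_n$ (using symmetry of $d$). Applying Lemma \ref{lem:cycchar} with $i_t=t$ yields $\alpha_n=\epsilon_{n,1}\leq\epsilon_{1,2}\p\cdots\p\epsilon_{n-1,n}=\alpha_1\p\cdots\p\alpha_{n-1}$, which is exactly transitivity of $\albar$.

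For the reverse direction, assume every diagonally indiscernible $n$-tuple is transitive, and fix an arbitrary indiscernible sequence $\cI=(\abar^l)_{l<\omega}$ in $\MU{\cR}$. By Lemma \ref{lem:cycchar}, to show $\cI$ is $n$-cyclic it suffices to fix $i_1,\ldots,i_n\in\ell(\abar^0)$ and verify $\epsilon_{i_n,i_1}\leq\epsilon_{i_1,i_2}\p\cdots\p\epsilon_{i_{n-1},i_n}$. If any two indices coincide then this inequality follows from Lemma \ref{lem:cycind}$(b)$, so assume $i_1,\ldots,i_n$ are distinct. Consider the restricted sequence $\bbar^l=(a^l_{i_1},\ldots,a^l_{i_n})$ for $l<\omega$, which is an indiscernible sequence of $n$-tuples in $\MU{\cR}$. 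Setting $\alpha_t=\epsilon_{i_t,i_{t+1}}$ for $1\leq t<n$ and $\alpha_n=\epsilon_{i_n,i_1}$, one checks from the definition that $(\alpha_1,\ldots,\alpha_n)$ is a diagonally indiscernible tuple witnessed by $(\bbar^l)_{l<\omega}$. By hypothesis, this tuple is transitive, which is the desired inequality. Therefore $\cI$ is $n$-cyclic, and since $\cI$ was arbitrary, $\SO(\cU_\cR)\leq n-1$.

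The argument is almost entirely a translation between notations. The only mild subtlety is handling the case of repeated indices in the reverse direction, but this is exactly what Lemma \ref{lem:cycind}$(b)$ provides, so there is no real obstacle.
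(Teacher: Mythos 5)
Your proof is correct and follows essentially the same route as the paper: the reverse direction is the paper's argument (contrapositive form), restricting coordinates to build a diagonally indiscernible tuple via Lemma \ref{lem:cycchar} and Lemma \ref{lem:cycind}$(b)$, and the forward direction just cites the "only if" half of Lemma \ref{lem:cycchar}, which the paper instead unwinds directly into the triangle-inequality computation on a realization of the cycle type. No gaps.
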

\begin{proof}
Assume $\SO(\cU_\cR)\leq n-1$, and fix a diagonally indiscernible tuple $\albar=(\alpha_1,\ldots,\alpha_n)\in(R^*)^n$, witnessed by an indiscernible sequence $(\abar^l)_{l<\omega}$ in $\MU{\cR}$. Then $(\abar^l)_{l<\omega}$ is $n$-cyclic by definition of strong order rank, and so there is $(\cbar^1,\ldots,\cbar^n)$ such that $(\cbar^t,\cbar^{t+1})\equiv(\abar^0,\abar^1)\equiv(\cbar^n,\cbar^1)$ for all $t<n$. In particular,
$$
\alpha_n=d(a^0_n,a^1_1)=d(c^1_1,c^n_n)\leq d(c^1_1,c^2_2)\p\ldots\p d(c^{n-1}_{n-1},c^n_n)=\alpha_1\p\ldots\p \alpha_{n-1}.
$$
Therefore $\albar$ is transitive.

Conversely, suppose $\SO(\cU_\cR)\geq n$. By definition, there is an indiscernible sequence $\cI=(\abar^l)_{l<\omega}$ in $\MU{\cR}$, which is not $n$-cyclic. Given $i,j\in\ell(\abar^0)$, let $\epsilon_{i,j}=d(a^0_i,a^1_j)$. By Lemma \ref{lem:cycchar}, there are $i_1,\ldots,i_n\in\ell(\abar^0)$ such that
$$
\epsilon_{i_n,i_1}>\epsilon_{i_1,i_2}\p\ldots\p\epsilon_{i_{n-1},i_n}.
$$
By Lemma \ref{lem:cycind}$(b)$, the map $t\mapsto i_t$ is injective. Given $l<\omega$, define $\bbar^l=(a^l_{i_1},\ldots,a^l_{i_n})$. Then $\ell(\bbar^0)=n$ and $\cJ=(\bbar^l)_{l<\omega}$ is an indiscernible sequence. Let $\alpha_n=\epsilon_{i_n,i_1}$ and, given $1\leq t<n$, let $\alpha_t=\epsilon_{i_t,i_{t+1}}$. Then, for any $t<n$, we have $d(b^0_t,b^1_{t+1})=d(a^0_{i_t},a^1_{i_{t+1}})=\alpha_t$. Moreover, $d(b^0_n,b^1_1)=d(a^0_{i_n},a^1_{i_1})=\alpha_n$. Therefore $\cJ$ witnesses that $\albar=(\alpha_1,\ldots,\alpha_n)$ is a non-transitive diagonally indiscernible tuple in $(R^*)^n$. 
\end{proof}

Next, we prove two technical lemmas. 

\begin{lemma}\label{lem:diag}
Suppose $n\geq 2$ and $(\alpha_1,\ldots,\alpha_n)$ is a diagonally indiscernible tuple in $(R^*)^n$. Then, for any $1\leq i<n$, we have
$\alpha_n\leq \alpha_1\p\ldots\p \alpha_{n-1}\p 2\alpha_i$.
\end{lemma}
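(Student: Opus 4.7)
The plan is to derive the bound by a sequence of triangle inequalities inside $\MU{\cR}$, routing the distance $\alpha_n = d(a^0_n, a^1_1)$ through well-chosen elements of the indiscernible sequence $(\bar{a}^l)_{l<\omega}$ witnessing the diagonal indiscernibility. Writing $\epsilon_{s,t} = d(a^0_s, a^1_t)$ as in Lemma~\ref{lem:cycind}, the goal is to show $\alpha_n = \epsilon_{n,1} \leq \alpha_1 \p \ldots \p \alpha_{n-1} \p 2\alpha_i$.

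I would first use indiscernibility to shift so that $\alpha_n = d(a^1_n, a^2_1)$, and apply the triangle inequality through the earlier point $a^0_i$; the two resulting segments collapse via indiscernibility to $\epsilon_{i,n}$ and $\epsilon_{i,1}$, giving $\alpha_n \leq \epsilon_{i,n} \p \epsilon_{i,1}$. The forward term $\epsilon_{i,n}$ is directly controlled by Lemma~\ref{lem:cycind}(a) along the path $i, i+1, \ldots, n$, yielding $\epsilon_{i,n} \leq \alpha_i \p \ldots \p \alpha_{n-1}$. Every forward index path from $i$ back to $1$, however, must cross the wrap at $\alpha_n$, so Lemma~\ref{lem:cycind}(a) alone cannot control $\epsilon_{i,1}$ without circularity. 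I would therefore apply a second triangle inequality through $a^2_i$, obtaining $\epsilon_{i,1} \leq \beta_i \p \epsilon_{1,i}$, where $\beta_i := d(a^0_i, a^1_i)$, and then bound $\epsilon_{1,i} \leq \alpha_1 \p \ldots \p \alpha_{i-1}$ by Lemma~\ref{lem:cycind}(a) on $1, 2, \ldots, i$.

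The main obstacle, and the source of the factor of $2$ in the statement, is the auxiliary bound $\beta_i \leq 2\alpha_i$. I would establish it by a third triangle inequality, routing $d(a^0_i, a^1_i)$ through $a^2_{i+1}$, which exists precisely because the hypothesis $i < n$ ensures $i+1$ is a valid coordinate; both legs collapse under indiscernibility to $\epsilon_{i,i+1} = \alpha_i$, forcing $\beta_i \leq 2\alpha_i$. Assembling the three bounds and rearranging via commutativity of $\p$ gives
$$
\alpha_n \leq \epsilon_{i,n} \p \epsilon_{i,1} \leq (\alpha_i \p \ldots \p \alpha_{n-1}) \p 2\alpha_i \p (\alpha_1 \p \ldots \p \alpha_{i-1}) = \alpha_1 \p \ldots \p \alpha_{n-1} \p 2\alpha_i.
$$
All points used come from $\{a^l_j : 0 \leq l \leq 2,\ 1 \leq j \leq n\}$, so no extension of the indiscernible sequence past $\omega$ is required.
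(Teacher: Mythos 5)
Your proof is correct and is essentially the paper's argument: both proofs realize $\alpha_n$ as a cross-level distance in the witnessing sequence, detour via the triangle inequality through shifted copies so that one ``same-coordinate'' distance $\epsilon_{j,j}$ appears and is bounded by $2\alpha_i$ (you bound $\epsilon_{i,i}\leq 2\alpha_i$ through $a^2_{i+1}$, the paper bounds $\epsilon_{i+1,i+1}\leq 2\alpha_i$ through $a^0_i$), and then chain the remaining terms with Lemma \ref{lem:cycind}$(a)$. Your bookkeeping is slightly cleaner: splitting at coordinate $i$ and using only $\abar^0,\abar^1,\abar^2$ avoids the paper's separate case $i=n-1$ and its use of $\abar^3$. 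The only blemish is the degenerate case $i=1$: the cited bound ``$\epsilon_{1,i}\leq\alpha_1\p\ldots\p\alpha_{i-1}$'' is then an empty path and not an instance of Lemma \ref{lem:cycind}$(a)$; but no detour through $a^2_i$ is needed there, since $\epsilon_{1,1}=\beta_1\leq 2\alpha_1$ already gives $\alpha_n\leq\epsilon_{1,n}\p\epsilon_{1,1}\leq\alpha_1\p\ldots\p\alpha_{n-1}\p 2\alpha_1$ directly.
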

\begin{proof}
Let $(\abar^l)_{l<\omega}$ be an indiscernible sequence in $\MU{\cR}$, which witnesses that $(\alpha_1,\ldots,\alpha_n)$ is a diagonally indiscernible tuple. Given $1\leq i,j\leq n$, let $\epsilon_{i,j}=d(a^0_i,a^1_j)$. Note that, if $1\leq i<n$ then $\epsilon_{i,i+1}=\alpha_i$ and, moreover,
$$
\epsilon_{i+1,i+1}=d(a^1_{i+1},a^2_{i+1})\leq d(a^1_{i+1},a^0_i)\p d(a^0_i,a^2_{i+1})=2\alpha_i.
$$
If $i<n-1$ then, using Lemma \ref{lem:cycind}$(a)$, we have
\begin{align*}
\alpha_n &= d(a^2_1,a^1_n)\\
 &\leq d(a^2_1,a^3_{i+1})\p d(a^3_{i+1},a^0_{i+1})\p d(a^0_{i+1},a^1_n)\\
 &= \epsilon_{1,i+1}\p \epsilon_{i+1,n}\p \epsilon_{i+1,i+1}\\
 &\leq \alpha_1\p\ldots\p\alpha_{n-1}\p 2\alpha_i
 \end{align*}
On the other hand, if $i=n-1$ then, using Lemma \ref{lem:cycind}$(a)$, we have
\begin{align*}
\alpha_n &= d(a^1_1,a^0_n)\\
 &\leq d(a^1_1,a^2_n)\p d(a^2_n,a^0_n)\\
  & = \epsilon_{1,n}\p\epsilon_{n,n} \leq \alpha_1\p\ldots\p\alpha_{n-1}\p 2\alpha_{n-1}.\qedhere
 \end{align*}
\end{proof}

\begin{lemma}\label{lem:SOseq}
Fix $n\geq 2$. If $\alpha_1,\ldots,\alpha_n\in R^*$, with $\alpha_1\leq\alpha_2\leq\ldots\leq \alpha_n$, and let $\beta=\alpha_1\p\ldots\p\alpha_n$, then  $(\alpha_2,\ldots,\alpha_n,\beta)$ is a diagonally indiscernible tuple.
\end{lemma}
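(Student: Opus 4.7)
The plan is to construct an indiscernible sequence $(\bar{a}^l)_{l<\omega}$ of $n$-tuples in $\MU{\cR}$ realizing the prescribed diagonal distances, by first specifying an appropriate shift-invariant $\cR^*$-metric directly on the countable set $X = \{a^l_s : l < \omega,\ 1 \leq s \leq n\}$ and then embedding $X$ into $\MU{\cR}$. Shift-invariance here means that $d(a^l_s, a^m_t)$ depends only on $s$, $t$, and the sign of $m-l$; together with quantifier elimination in $\TR{\cR}$, this forces the sequence of $n$-tuples $\bar a^l = (a^l_1, \dots, a^l_n)$ to be automatically indiscernible, and the required embedding into $\MU{\cR}$ exists by the universality and saturation given in Proposition \ref{saturated}.

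The metric on $X$ is specified by two functions: an intra-tuple distance $g(s,t) = d(a^l_s, a^l_t)$, and a (generally non-symmetric in its arguments) inter-tuple distance $\phi(s,t) = d(a^l_s, a^m_t)$ for $l<m$. The diagonal conditions fix $\phi(s,s+1) = \alpha_{s+1}$ for $1 \leq s < n$ and $\phi(n,1) = \beta$. A natural completion takes $g(s,t) = \beta$ for $s \neq t$, $\phi(s,t) = \alpha_{s+1} \p \alpha_{s+2} \p \cdots \p \alpha_t$ for $s<t$ (so that forward diagonals compose), $\phi(s,t) = \beta$ for $s>t$ (reflecting the large wraparound edge), and $\phi(s,s)$ chosen as large as the constraints permit, bounded above by $2\phi(s,u)$ for every $u$ -- a condition that arises from three-layer triangles with a repeated middle index.

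The main obstacle is verifying every triangle inequality of the resulting metric. Triples of points in $X$ split by the number of distinct layers they occupy: the single-layer case is trivial; the two-layer case produces mixed constraints on $g$ and $\phi$ that are all dispatched by $g \equiv \beta$ dominating every $\phi$-value; and the three-layer case is the crux, yielding three asymmetric inequalities on $\phi$ alone, namely $\phi(s,u) \leq \phi(s,t) \p \phi(t,u)$, $\phi(s,t) \leq \phi(s,u) \p \phi(t,u)$, and $\phi(t,u) \leq \phi(s,t) \p \phi(s,u)$. These are handled by case analysis on the relative order of $s,t,u$, using monotonicity of $\p$ together with the hypothesis $\alpha_1 \leq \cdots \leq \alpha_n$: any $\alpha_i$ missing from a given forward path is dominated by some $\alpha_j$ that does appear, so the relevant sums always dominate $\beta = \alpha_1 \p \cdots \p \alpha_n$. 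Upper semicontinuity in $\cR^*$ is used throughout to translate between elements of $R$ and $R^*$.
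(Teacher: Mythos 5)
Your overall strategy---a layer-shift-invariant $\cR^*$-metric on $\{a^l_s: l<\omega,\ 1\leq s\leq n\}$, indiscernibility via quantifier elimination, and an embedding into $\MU{\cR}$ via universality---is exactly the paper's. But the specific metric you propose is not a metric, so the construction collapses at precisely the step you call the crux. The problem is the choice $g(s,t)=\beta$ for intra-tuple distances. Your dismissal of the two-layer case (``dispatched by $g\equiv\beta$ dominating every $\phi$-value'') only handles the triangle inequalities in which $g$ sits on the right-hand side; the inequality $g(s,t)\leq\phi(s,u)\p\phi(t,u)$, with $g$ on the left, is the hard one, and it fails. Concretely, take $\cR=\cN$, $n=4$, and $\alpha_1=\alpha_2=\alpha_3=\alpha_4=1$, so $\beta=4$. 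For the triple $a^0_1,a^0_2,a^1_3$ your assignment gives $d(a^0_1,a^0_2)=\beta=4$, $d(a^0_1,a^1_3)=\alpha_2\p\alpha_3=2$, and $d(a^0_2,a^1_3)=\alpha_3=1$, violating the triangle inequality. The same counting defect undermines the heuristic in your last paragraph: in $\phi(s,u)\p\phi(t,u)$ the indices missing from the two forward paths (here $1$ and $4$) can outnumber the repeated indices available to dominate them, so ``every missing $\alpha_i$ is dominated by some $\alpha_j$ that appears'' is false. (A secondary issue: since your $\phi$ is not symmetric, the repeated-middle-index triangle actually imposes $\phi(s,s)\leq\phi(s,u)\p\phi(u,s)$, not $\phi(s,s)\leq 2\phi(s,u)$.)

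The repair is to make \emph{every} distance a partial sum along the cyclic staircase rather than defaulting to $\beta$, which is what the paper does: $d(a^k_i,a^l_j)=\alpha_j\p\cdots\p\alpha_i$ if either $k<l$ and $i\geq j$, or $k=l$ and $i>j$; and $d(a^k_i,a^l_j)=\alpha_{i+1}\p\cdots\p\alpha_j$ if $k<l$ and $i<j$. This still realizes the required diagonal values, $d(a^0_i,a^1_{i+1})=\alpha_{i+1}$ and $d(a^0_n,a^1_1)=\alpha_1\p\cdots\p\alpha_n=\beta$, and now every triangle inequality is a comparison of such path sums, which does follow from monotonicity of $\p$ together with $\alpha_1\leq\cdots\leq\alpha_n$---the ``tedious but routine'' case analysis the paper alludes to. Your treatment of indiscernibility and of the embedding into $\MU{\cR}$ is fine; only the metric itself needs to change.
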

\begin{proof}
Define the sequence $(\abar^l)_{l<\omega}$, such that $\ell(\abar^0)=n$ and, given $k\leq l<\omega$ and $1\leq i,j\leq n$,
$$
d(a^k_i,a^l_j)=\begin{cases}
\alpha_j\p\alpha_{j+1}\p\ldots\p\alpha_i & \text{if $k<l$ and $i\geq j$, or $k=l$ and $i>j$}\\
\alpha_{i+1}\p\alpha_{i+2}\p\ldots\p\alpha_j & \text{if $k<l$ and $i<j$.}
\end{cases}
$$
We have $d(a^0_n,a^1_1)=\beta$ and, given $1\leq i<n$, $d(a^0_i,a^1_{i+1})=\alpha_{i+1}$. Therefore, it suffices to verify this sequence satisfies the triangle inequality. This verification is quite tedious, but follows from a routine case analysis, which crucially depends on the assumption $
\alpha_1\leq \alpha_2\leq\ldots\leq \alpha_n$.
\end{proof}

We now have the tools necessary to prove the main result of this section.

\begin{theorem}\label{thm:SOR}
If $\cR$ is a Urysohn monoid then $\SO(\cU_\cR)=\arch(\cR)$.
\end{theorem}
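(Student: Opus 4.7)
The plan is to prove both inequalities $\SO(\cU_\cR) \leq \arch(\cR)$ and $\SO(\cU_\cR) \geq \arch(\cR)$ using Proposition \ref{prop:SOcyc} (which recasts the rank entirely in terms of diagonally indiscernible tuples), together with Proposition \ref{prop:same} (so that I can freely work with $\arch(\cR^*)$ in place of $\arch(\cR)$). The two lemmas already established in this section, Lemmas \ref{lem:diag} and \ref{lem:SOseq}, are designed to match the two directions almost perfectly.

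For the lower bound $\SO(\cU_\cR) \geq \arch(\cR)$, I would fix $n \geq 2$ with $\arch(\cR) \geq n$ and, by negating the definition of $\arch \leq n-1$, produce $r_0 \leq r_1 \leq \ldots \leq r_{n-1}$ in $R$ with
$$
\beta := r_0 \p r_1 \p \ldots \p r_{n-1} > r_1 \p \ldots \p r_{n-1}.
$$
Lemma \ref{lem:SOseq}, applied to the increasing $n$-tuple $(r_0, \ldots, r_{n-1})$, immediately delivers the diagonally indiscernible tuple $(r_1, \ldots, r_{n-1}, \beta)$ of length $n$, and this tuple visibly fails transitivity, so Proposition \ref{prop:SOcyc} gives $\SO(\cU_\cR) > n-1$. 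Letting $n$ range covers both the case $\arch(\cR) < \omega$ and the case $\arch(\cR) = \omega$, while $n \leq 1$ is trivial since $\cU_\cR$ is infinite.

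For the upper bound, suppose $n := \arch(\cR) < \omega$ (the case $n = \omega$ being vacuous) and take a diagonally indiscernible tuple $(\alpha_1, \ldots, \alpha_{n+1})$ in $(R^*)^{n+1}$; the goal, via Proposition \ref{prop:SOcyc}, is $\alpha_{n+1} \leq \alpha_1 \p \ldots \p \alpha_n$. Choosing $i \in \{1, \ldots, n\}$ that minimizes $\alpha_i$ among the first $n$ coordinates, Lemma \ref{lem:diag} supplies
$$
\alpha_{n+1} \leq \alpha_1 \p \ldots \p \alpha_n \p 2\alpha_i,
$$
and the remaining task is to absorb the two surplus copies of $\alpha_i$. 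For this I would record a one-line absorption principle: if $\arch(\cR^*) \leq n$, $\alpha \leq \gamma_j$ for each $j = 1, \ldots, m$, and $m \geq n$, then $\alpha \p \gamma_1 \p \ldots \p \gamma_m = \gamma_1 \p \ldots \p \gamma_m$ (reorder the first $n+1$ of the terms $\alpha, \gamma_1, \ldots, \gamma_m$ in increasing order and invoke the definition of $\arch \leq n$ directly). Applying this twice, first with $\alpha_i$ as $\alpha$ and $\alpha_1, \ldots, \alpha_n, \alpha_i$ as the $\gamma$'s, then again with $\alpha_i$ as $\alpha$ and $\alpha_1, \ldots, \alpha_n$ as the $\gamma$'s, collapses the right-hand side to $\alpha_1 \p \ldots \p \alpha_n$, as needed.

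The only subtle point, and the main obstacle, lies in the upper bound: Lemma \ref{lem:diag} never produces the clean inequality $\alpha_{n+1} \leq \alpha_1 \p \ldots \p \alpha_n$, only the weaker one with the extra summand $2\alpha_i$. Bridging this gap is the entire content of the argument, and it is precisely the archimedean hypothesis, packaged as the absorption principle above, that does so. Once this small observation is in place, both directions fall out from the section's lemmas with essentially no further computation.
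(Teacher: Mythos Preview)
Your proposal is correct and follows essentially the same approach as the paper: both directions rest on Proposition \ref{prop:SOcyc}, with Lemma \ref{lem:SOseq} supplying the lower bound and Lemma \ref{lem:diag} plus an absorption argument supplying the upper bound, together with Proposition \ref{prop:same} to pass between $\cR$ and $\cR^*$. The only cosmetic difference is that the paper frames the upper bound contrapositively (a non-transitive diagonally indiscernible $n$-tuple forces $\beta_1\p\ldots\p\beta_{n-1}<\beta_1\p\beta_1\p\ldots\p\beta_{n-1}$, hence $\arch(\cR^*)\geq n$), whereas you argue directly that $\arch(\cR^*)\leq n$ lets you absorb the extra $2\alpha_i$; these are the same computation viewed from opposite ends.
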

\begin{proof}
First, note that if $\SO(\cU_\cR)>n$ and $\arch(\cR)>n$ for all $n<\omega$ then, by Corollary \ref{cor:NFSOP} and our conventions, we have $\SO(\cU_\cR)=\omega=\arch(\cR)$. 

Therefore, it suffices to fix $n\geq 1$ and show $\SO(\cU_\cR)\geq n$ if and only if $\arch(\cR)\geq n$. Note that $\arch(\cR)=0$ if and only if $\cR$ is the trivial monoid, in which case $\MU{\cR}$ is a single point. Conversely, if $\cR$ is nontrivial then $\MU{\cR}$ is clearly infinite. So $\arch(\cR)<1$ if and only if $\TR{\cR}$ has finite models, which is equivalent to $\SO(\cU_\cR)<1$. So we may assume $n\geq 2$. 

Suppose $\arch(\cR)\geq n$. Then there are $r_1,\ldots,r_n\in R$ such that $r_1\leq r_2\leq\ldots\leq r_n$ and $r_2\p\ldots\p r_n<r_1\p r_2\p\ldots\p r_n$. By Lemma \ref{lem:SOseq}, it follows that $(r_2,\ldots,r_n,r_1\p\ldots\p r_n)$ is a non-transitive diagonally indiscernible tuple of length $n$. Therefore $\SO(\cU_\cR)\geq n$ by Proposition  \ref{prop:SOcyc}.

Finally, suppose $\SO(\cU_\cR)\geq n$. By Proposition \ref{prop:SOcyc} there is a non-transitive diagonally indiscernible tuple $(\alpha_1,\ldots,\alpha_n)$ in $(R^*)^n$. Let $\beta_1,\ldots,\beta_{n-1}$ be an enumeration of $\alpha_1,\ldots,\alpha_{n-1}$, with $\beta_1\leq\ldots\leq \beta_{n-1}$. Then, using Lemma \ref{lem:diag},
$$
\beta_1\p\ldots\p\beta_{n-1}<\alpha_n\leq 2\beta_1\p\beta_1\p\ldots\p\beta_{n-1},
$$
which implies $\beta_1\p\ldots\p\beta_{n-1}<\beta_1\p\beta_1\p\ldots\p\beta_{n-1}$, and so $\arch(\cR^*)\geq n$. By Proposition \ref{prop:same}, $\arch(\cR)\geq n$. 
\end{proof}

Since archimedean complexity is a first-order property of distance monoids, ``$\SO(\cU_\cR)=n$" is a finitely axiomatizable property of $\mathbf{RUS}$ for all $n<\omega$. It then follows that ``$\SO(\cU_\cR)=\omega$" is an axiomatizable property of $\mathbf{RUS}$.

\subsection{Further Remarks on Simplicity}

If $\cR$ is a Urysohn monoid then, by Theorem \ref{thm:simple}, $\TR{\cR}$ simple if and only if $\arch(\cR)\leq 2$. Therefore, by Theorem \ref{thm:SOR}, we have the following corollary.  

\begin{corollary}\label{cor:SOP3}
If $\cR$ is a Urysohn monoid, and $\TR{\cR}$ is $\NSOP_3$, then $\TR{\cR}$ is simple.
\end{corollary}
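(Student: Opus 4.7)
The plan is to chain together the three major results already established in Sections \ref{sec:simple}, \ref{sec:NFSOP}, and this section: the algebraic characterization of simplicity (Theorem \ref{thm:simple}), the non-$\FSOP$ result (Corollary \ref{cor:NFSOP}), and the equality $\SO(\cU_\cR) = \arch(\cR)$ (Theorem \ref{thm:SOR}). The argument is essentially a two-line chase once these are in place, so the real content is just keeping track of what $\NSOP_3$ translates to in the language of strong order rank.

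First, I would note that by Corollary \ref{cor:NFSOP}, $\TR{\cR}$ is $\NFSOP$, so $\SO(\TR{\cR})$ is actually defined (rather than being set to ``undefined'' via the convention for $\FSOP$ theories). This is the step that makes the translation between $\SOP_n$ and $\SO$-rank via Fact \ref{fact:SOP}(b)(ii) legitimate.

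Next, assume $\TR{\cR}$ is $\NSOP_3$. By Fact \ref{fact:SOP}(b)(ii), this is equivalent to $\SO(\cU_\cR) < 3$, i.e.\ $\SO(\cU_\cR)\leq 2$. Applying Theorem \ref{thm:SOR}, we get $\arch(\cR)\leq 2$. Unpacking Definition \ref{def:arch}, this says that $r_0\p r_1\p r_2 = r_1\p r_2$ for all $r_0\leq r_1\leq r_2$ in $R$; specializing $r_0=r_1 = r$ and $r_2 = s$ with $r\leq s$ yields $r\p r\p s = r\p s$, which is exactly condition $(iv)$ of Theorem \ref{thm:simple}. Hence $\TR{\cR}$ is simple.

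There is no real obstacle here — each implication has already been done in the preceding sections, and this corollary is exactly the $n=3$ instance of the general slogan ``$\TR{\cR}$ is $\SOP_n$ iff $\arch(\cR)\geq n$'' (for $n\geq 3$) combined with the observation that the simple/$\SOP_3$ boundary lies at $\arch(\cR)=2$ vs.\ $\arch(\cR)=3$. The only mildly subtle point to flag is that $\NSOP_3$ alone does not \emph{a priori} imply anything about strong order rank without first knowing $\NFSOP$, which is why Corollary \ref{cor:NFSOP} is invoked up front.
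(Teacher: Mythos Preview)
Your proof is correct and follows the same route as the paper: the paper's proof is just the sentence preceding the corollary, which observes that simplicity is equivalent to $\arch(\cR)\leq 2$ (via Theorem \ref{thm:simple}) and then invokes Theorem \ref{thm:SOR}. You spell out a bit more detail, in particular the appeal to Corollary \ref{cor:NFSOP} to ensure $\SO(\cU_\cR)$ is defined before applying Fact \ref{fact:SOP}$(b)(ii)$, but the argument is the same.
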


This corollary deserves mention because, in general, non-simple $\NSOP_3$ theories are quite scarce. A similar phenomenon in model theoretic dividing lines concerns non-simple theories, which have neither $\TP_2$ nor the strict order property. In particular, there are no known examples of such theories. Since we have shown that $\TR{\cR}$ never has the strict order property, it is worth proving that $\NTP_2$ for $\TR{\cR}$ implies simplicity. This also fits nicely with the fact that $\NIP$ for $\TR{\cR}$ implies stability (see Remark \ref{rem:NIP}). We refer the reader to \cite{ChNTP2} for the definition of $\TP_2$, and recall the fact that simple theories are $\NTP_2$.

\begin{theorem}\label{thm:TP2}
If $\cR$ is a Urysohn monoid, and $\TR{\cR}$ is $\NTP_2$ then $\TR{\cR}$ is simple.
\end{theorem}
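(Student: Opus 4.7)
My plan is to argue by contrapositive: assume $\TR{\cR}$ is not simple and exhibit a formula with $\TP_2$. By Theorem~\ref{thm:simple}$(iv)$, non-simplicity produces $r,s\in R$ with $r\leq s$ and $r\p s<2r\p s$; necessarily $r>0$ (else $r\p s=s=2r\p s$), and hence $s\geq r>0$. The formula I would show has $\TP_2$ is
\[
\varphi(x;y,z)\;:=\;d(x,y)\leq r\;\wedge\;d(x,z)\leq s.
\]

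The candidate array is $\{(b^i_j,c^i_j):i,j<\omega\}\subset\MU{\cR}$ with distances
\begin{align*}
d(b^i_j,b^{i'}_{j'}) &= 2r,\quad d(c^i_j,c^{i'}_{j'}) = 2s \;\;\text{for }(i,j)\neq(i',j'),\\
d(b^i_j,c^{i'}_{j'}) &= 2r\p s \;\;\text{if }i=i'\text{ and }j\neq j',\\
d(b^i_j,c^{i'}_{j'}) &= r\p s \;\;\text{otherwise}.
\end{align*}
The conceptual point is that the gap $r\p s<2r\p s$ supplied by non-simplicity is precisely what separates the ``in-row, off-diagonal'' $b$-$c$ distances from every other $b$-$c$ distance, and this is exactly the gap needed to force within-row inconsistency while preserving across-row consistency. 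The step I expect to be the main obstacle is verifying that this specification defines a genuine $\cR$-metric space on the $b^i_j$'s and $c^i_j$'s; this is a finite case analysis over triangle types, and with $r\leq s$ in hand the only nontrivial inequality that arises has the form $2r\p s\leq r\p 3s$, which reduces to $r\leq 2s$. Once this is done, the countable space embeds into $\U_\cR\subseteq\MU{\cR}$ by universality of the Urysohn space.

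The remaining two verifications are short. Row $2$-inconsistency: if some $x$ realized both $\varphi(x;b^i_{j_1},c^i_{j_1})$ and $\varphi(x;b^i_{j_2},c^i_{j_2})$ with $j_1\neq j_2$, the triangle inequality would give $d(b^i_{j_1},c^i_{j_2})\leq d(b^i_{j_1},x)\p d(x,c^i_{j_2})\leq r\p s$, contradicting $d(b^i_{j_1},c^i_{j_2})=2r\p s$. Path consistency: for any $f:\omega\to\omega$, I adjoin a fresh point $x$ with $d(x,b^i_{f(i)})=r$ and $d(x,c^i_{f(i)})=s$ for all $i$; the triangles through $x$ have sides $(r,r,2r)$, $(s,s,2s)$, or $(r,s,r\p s)$, all valid, so every finite sub-type of the path-type is realized in $\U_\cR$ and by saturation the full path is realized in $\MU{\cR}$. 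This witnesses $\TP_2$ for $\varphi$ and completes the contrapositive.
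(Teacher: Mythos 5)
Your proof is correct and takes essentially the same route as the paper: the paper uses the very same formula $d(x,y_1)\leq r\wedge d(x,y_2)\leq s$ and the same array (with within-type distances $r$ and $s$ in place of your $2r$ and $2s$, and crossing distances $r\p s$ versus $2r\p s$ arranged exactly as yours), checks the triangle inequality by the same short case analysis, and gets path consistency by the same one-point (Kat\v{e}tov) extension with values $r$ and $s$, followed by the same two-point inconsistency argument within each row. One cosmetic caution: your phrase ``reduces to $r\leq 2s$'' should be read as ``is implied by $r\leq 2s$ via translation invariance'' (add $r\p s$ to both sides), since cancellation is not available in a general distance monoid.
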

\begin{proof}
Suppose $\TR{\cR}$ is not simple. By Theorem \ref{thm:simple}, we may fix $r,s\in R$ such that $r\leq s$ and $r\p s<2r\p s$. Define a set $A=\{a^{i,j}_1,a^{i,j}_2:i,j<\omega\}$. We define $d$ on $A\times A$ such that 
$$
d(a^{i,j}_m,a^{k,l}_n)=\begin{cases}
r & \text{if $m=n=1$ and $(i,j)\neq (k,l)$}\\
s & \text{if $m=n=2$ and $(i,j)\neq (k,l)$}\\
r\p s & \text{if $m\neq n$, and $i\neq k$ or $j=l$}\\
2r\p s & \text{if $m\neq n$, $i=k$, and $j\neq l$.}
\end{cases}
$$
To verify the triangle inequality for $d$, fix a non-degenerate triangle $\{a^{i,j}_m,a^{k,l}_n,a^{g,h}_p\}$ in $A$. Let $\alpha=d(a^{i,j}_m,a^{k,l}_n)$, $\beta=d(a^{i,j}_m,a^{g,h}_p)$, and $\gamma=d(a^{k,l}_n,a^{g,h}_p)$. We want to show $\alpha\leq\beta\p\gamma$, $\beta\leq\alpha\p\gamma$, and $\gamma\leq\alpha\p\beta$. Without loss of generality, assume $m=n$. If $m=p$ then $\alpha=\beta=\gamma$ and so the desired inequalities hold. If $m\neq p$ then $\alpha\in\{r,s\}$ and $\beta,\gamma\in\{r\p s,2r\p s\}$, so the desired inequalities hold.

We may assume $A\subset\U_\cR$. Define the formula
$$
\vphi(x,y_1,y_2):= d(x,y_1)\leq r\wedge d(x,y_2)\leq s.
$$
We show that $A$ and $\vphi(x,y_1,y_2)$ witness $\TP_2$ for $\TR{\cR}$.

Fix a function $\sigma:\omega\func\omega$ and, given $n<\omega$ and $i\in\{1,2\}$, set $b^n_i=a^{n,\sigma(n)}_i$. Let $B=\{b^n_1,b^n_2:n<\omega\}$. To show that $\{\vphi(x,b^n_1,b^n_2):n<\omega\}$ is consistent, it suffices to show that the function $f:B\func\{r,s\}$, such that $f(b^n_1)=r$ and $f(b^n_2)=s$, describes a one-point metric space extension of $B$ (in general, such functions are called Kat\v{e}tov maps, see \cite{MeUS}). In other words, we must verify the inequalities $|f(u)\m f(v)|\leq d(u,v)\leq f(u)\p f(v)$ for all $u,v\in B$. These are all easily checked.

Finally, we fix $n<\omega$ and $i<j<\omega$ and show $\vphi(x,a^{n,i}_1,a^{n,i}_2)\wedge\vphi(x,a^{n,j}_1,a^{n,j}_2)$ is inconsistent. Indeed, any realization $c$ would violate the triangle inequality:
\begin{equation*}
d(c,a^{n,i}_2)\p d(c,a^{n,j}_1)\leq r\p s<2r\p s=d(a^{n,i}_2,a^{n,j}_1).\qedhere
\end{equation*}
\end{proof}

\subsection{Examples}

In this section, we give tests for calculating the strong order rank of $\TR{\cR}$. We also simplify the calculation in the case when $\cR$ is archimedean, and give conditions under which the isomorphism type of a finite distance monoid is entirely determined by cardinality and archimedean complexity.  

\begin{definition}
Let $\cR$ be a distance monoid.
\begin{enumerate}
\item Given $r,s\in R$, define $\lceil\frac{r}{s}\rceil=\inf\{n<\omega:r\leq ns\}$, where, by convention, we let $\inf\emptyset=\omega$.
\item Given $t\in R$, define $[t]=\{x\in R:x\asymp t\}$ (see Definition \ref{def:archi}). Define
$$
\arch_\cR(t)=\sup\left\{\left\lceil\textstyle\frac{r}{s}\right\rceil:r,s\in[t]\right\},
$$
where, by convention, we let $\sup\N=\omega$. 
\end{enumerate}
\end{definition}

\begin{proposition}\label{prop:archSO}
Suppose $\cR$ is a countable distance monoid.
\begin{enumerate}[$(a)$]
\item $\arch(\cR)\geq\max\{\arch_\cR(t):t\in R\}$.
\item If $\cR$ is archimedean then, for any $t\in R^{>0}$,
$$
\arch(\cR)=\arch_\cR(t)=\left\lceil\frac{\sup R^{>0}}{\inf R^{>0}}\right\rceil,
$$
where $\sup R^{>0}$ and $\inf R^{>0}$ are calculated in $R^*$.
\end{enumerate}
\end{proposition}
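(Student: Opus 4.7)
The plan is to handle (a) and (b) in turn; the heart of (b) will combine part (a) with an argument exploiting upper semicontinuity in $\cR^*$.

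For (a), I would fix $t \in R$ and $r, s \in [t]$ and aim to show $\lceil r/s\rceil \leq \arch(\cR)$. Set $n = \arch(\cR)$; one may assume $n < \omega$ and $s > 0$, since the cases $s = 0$ (which forces $r = 0$) and $n = 0$ (which forces $R = \{0\}$) are immediate. Applying the defining relation of $\arch(\cR) \leq n$ to the constant chain $r_0 = r_1 = \cdots = r_n = s$ yields $(n+1)s = ns$, and a short induction then gives $ms = ns$ for every $m \geq n$. Since $r \asymp s$ implies $r \preceq s$, there is some $m$ with $r \leq ms$; choosing $m \geq n$ yields $r \leq ns$, i.e.\ $\lceil r/s \rceil \leq n$. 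Taking a supremum over $r, s$ and then over $t$ gives (a).

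For (b), assume $\cR$ is archimedean and nontrivial, so $[t] = R^{>0}$ for every $t \in R^{>0}$ and $\arch_\cR(t)$ is independent of the choice of such $t$. Write $\alpha = \inf R^{>0}$ and $\beta = \sup R^{>0}$, both computed in $R^*$. First I would prove $\arch_\cR(t) = \lceil \beta/\alpha\rceil$. For $\leq$, every $r, s \in R^{>0}$ satisfy $r \leq \beta$ and $s \geq \alpha$ in $R^*$, so $\lceil r/s\rceil \leq \lceil \beta/\alpha\rceil$. For $\geq$, suppose $n < \lceil \beta/\alpha\rceil$, so $n\alpha < \beta$; by upper semicontinuity in $\cR^*$ one has $n\alpha = \inf\{ns : s \in R^{>0}\}$, so some $s \in R^{>0}$ satisfies $ns < \beta$, and since $\beta = \sup R^{>0}$ there is $r \in R^{>0}$ with $ns < r$. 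This gives $\lceil r/s\rceil > n$ and hence $\arch_\cR(t) > n$.

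To finish (b) it remains, in view of (a), to prove $\arch(\cR) \leq n$ where $n = \lceil \beta/\alpha\rceil$; we may assume $n < \omega$. Fix a chain $r_0 \leq r_1 \leq \cdots \leq r_n$ in $R$. The inequality $r_0 \p r_1 \p \cdots \p r_n \geq r_1 \p \cdots \p r_n$ is automatic, and the reverse is trivial when $r_0 = 0$. When $r_0 > 0$, all $r_i > 0$, and in $R^*$ we have
\[
x := r_1 \p \cdots \p r_n \;\geq\; n r_1 \;\geq\; n\alpha \;\geq\; \beta = \sup R.
\]
Since $x \in R$ we also have $x \leq \sup R = \beta$, so $x = \beta \in R$; hence $\beta = \max R$, and by Theorem \ref{thm:R*}(b)(i) also $\beta = \max R^*$. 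Therefore $r_0 \p x = r_0 \p \beta = \beta = x$. The main subtlety is exactly this collapse: although $\alpha$ and $\beta$ a priori live only in $R^*$, the finite archimedean bound $\beta \leq n\alpha$ forces the sum $x$, which already lies in $R$, to equal $\beta$, so $\beta$ is actually attained in $R$ and adjoining the extra term $r_0$ cannot push $x$ past this maximum.
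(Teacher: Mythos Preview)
Your proof is correct and follows essentially the same path as the paper. Part (a) is literally the contrapositive of the paper's argument (the paper shows $ns<r$ forces $ns<(n+1)s$ and hence $\arch(\cR)>n$; you show $\arch(\cR)=n$ forces $(n+1)s=ns$ and hence $r\leq ns$), and your proof of the second equality in (b) is the paper's verbatim.

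The only point of divergence is the last step of (b). The paper stays entirely inside $R$: from $\arch_\cR(t)=n$ it extracts the uniform bound ``$s\leq nr$ for all $r,s\in R^{>0}$'' and applies it with $s=r_0\p r_1\p\cdots\p r_n$ and $r=r_1$, obtaining in one line
\[
r_0\p r_1\p\cdots\p r_n\;\leq\; nr_1\;\leq\; r_1\p\cdots\p r_n.
\]
You instead pass through $R^*$, showing $r_1\p\cdots\p r_n\geq n\alpha\geq\beta=\sup R$, which forces $\beta=\max R$ and then $r_0\p\beta=\beta$. This is a perfectly valid alternative and the extra detour through $R^*$ is harmless (indeed, it makes explicit the side fact that $\arch_\cR(t)<\omega$ forces $R$ to have a maximal element), but the paper's route is a bit cleaner since it avoids ever leaving $R$ or invoking Theorem~\ref{thm:R*}.
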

\begin{proof}
Part $(a)$. It suffices to fix $t\in R$ and $r,s\in[t]$, with $s<r$, and show that if $n<\omega$ is such that $ns<r$, then $\arch(\cR)>n$. Since $r,s\in[t]$, there is some $m<\omega$ such that $r\leq ms$, and so we have $ns<ms$. It follows that $ns<(n+1)s$, which gives $\arch(\cR)>n$.

Part $(b)$. Fix $t\in R^{>0}$. Since $\cR$ is archimedean, we have $[t]=R^{>0}$, and so, for the second equality, we want to show $\arch_\cR(t)=m:=\lceil\frac{\sup[t]}{\inf[t]}\rceil\in\N\cup\{\omega\}$. The inequality $\arch_\cR(t)\leq m$ is clear. For the other direction, suppose $n\in\N$ and $m>n$. Then, by upper semicontinuity in $\cR^*$, we have $\inf\{nx:x\in[t]\}<\sup[t]$, and so there are $r,s\in[t]$ such that $ns<r$, i.e. $\lceil\frac{r}{s}\rceil>n$.  

To show the first equality, it suffices by part $(a)$ to show $\arch(\cR)\leq\arch_\cR(t)$. We may assume $\arch_\cR(t)=n<\omega$. In particular, for any $r,s\in R^{>0}$, we have $s\leq nr$. Therefore, for any $r_0,r_1\ldots,r_n\in R$, with $0<r_0\leq r_1\leq\ldots\leq r_n$, we have $r_0\p r_1\p\ldots\p r_n\leq nr_1\leq r_1\p\ldots\p r_n$, as desired.
\end{proof}

\begin{example}$~$
\begin{enumerate}
\item Fix a countable ordered abelian group $\cG=(G,+,\leq,0)$ and a convex subset $I\seq G^{>0}$, which is either closed under addition or contains a maximal element. Let $R=I\cup\{0\}$ and, given $r,s\in R$, set $r\p s=\min\{r+s,\sup I\}$. Then $\cR=(R,\p,\leq,0)$ is Urysohn (see \cite[Proposition 7.3]{CoDM1}). If we further assume that the original group $\cG$ is archimedean, then $\cR$ is archimedean as well, and we have 
$$
\SO(\cU_\cR)=\arch(\cR)=\left\lceil\frac{\sup I}{\inf I}\right\rceil,
$$
where $\sup I$ and $\inf I$ are calculated in $\cR^*$.
\item Using the previous example, we can calculate the model theoretic complexity of many classical examples of Urysohn spaces. In particular, using the notation of Example \ref{allEX}, we have
\begin{enumerate}[$(i)$]
\item $\SO(\cU_\cQ)=\SO(\cU_{\cQ_1})=\SO(\cU_\cN)=\omega$;
\item given $n>0$, $\SO(\cU_{\cR_n})=n$.
\end{enumerate}
\end{enumerate}
\end{example}

Next, we consider the relationship between the archimedean rank and cardinality of a distance monoid $\cR$. To motivate this, we give an example showing that, in Proposition \ref{prop:archSO}$(a)$, the inequality can be strict. Consider $\cS=(\{0,1,2,5,6,7\},+_S,\leq,0)$ (see Example \ref{allEX}$(4)$). The reader may verify that $+_S$ is associative on $S$. Note that $1$ and $5$ are representatives for the two nontrivial archimedean classes in $\cS$, and $\arch_\cS(1)=2=\arch_\cS(5)$. However, $1+_S 5<1+_S 1+_S 5$, and so $\arch(\cS)\geq 3$. In fact, a direction calculation shows $\arch(\cS)=3$. 

The previous discussion shows that, given a distance monoid $\cR$, if $\arch(\cR)\geq n$ then we cannot always expect to have some $t\in R$ with $\arch_\cR(t)\geq n$. On the other hand, we do have the following property.

\begin{proposition}\label{prop:SOclasssize}
Suppose $\cR$ is a distance monoid. If $n<\omega$ and $\arch(\cR)\geq n$ then there is some $t\in R^{>0}$ such that $|[t]|\geq n$.
\end{proposition}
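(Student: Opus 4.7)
My plan is to extract from the hypothesis $\arch(\cR) \geq n$ a sequence $r_0 \leq r_1 \leq \ldots \leq r_{n-1}$ in $R$ satisfying $r_0 \oplus r_1 \oplus \ldots \oplus r_{n-1} > r_1 \oplus r_2 \oplus \ldots \oplus r_{n-1}$ (this is exactly what the negation of $\arch(\cR) \leq n-1$ gives), and then exhibit $n$ distinct elements in the single archimedean class $[r_{n-1}]$. The natural candidates are the partial sums
$$
s_k := r_k \oplus r_{k+1} \oplus \ldots \oplus r_{n-1}, \qquad 0 \leq k \leq n-1.
$$
Monotonicity of $\oplus$ gives $s_0 \geq s_1 \geq \ldots \geq s_{n-1} = r_{n-1}$, with $s_0 > s_1$ by hypothesis. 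The bounds $r_{n-1} \leq s_k \leq (n-k)r_{n-1}$ (using $r_i \leq r_{n-1}$) put every $s_k$ in $[r_{n-1}]$, and the hypothesis forces $r_{n-1} > 0$ (otherwise every $r_i = 0$ and $s_0 = 0 = s_1$). So it suffices to show the $s_k$ are pairwise distinct.

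The key lemma is: for every $1 \leq j \leq n-2$, $s_j > s_{j+1}$. I would prove this by contradiction: assume $s_j = s_{j+1}$ and set $v = s_{j+1}$, so $r_j \oplus v = v$. Since each $r_i$ with $i \leq j$ satisfies $r_i \leq r_j$, monotonicity gives $v \leq r_i \oplus v \leq r_j \oplus v = v$, hence $r_i \oplus v = v$ for every $i \leq j$. Using commutativity and associativity of $\oplus$, I can then absorb the prefix one term at a time:
$$
s_1 = r_1 \oplus \ldots \oplus r_j \oplus v = r_1 \oplus \ldots \oplus r_{j-1} \oplus v = \ldots = v,
$$
and identically $s_0 = r_0 \oplus s_1 = r_0 \oplus v = v$. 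This forces $s_0 = s_1$, contradicting the defining inequality for $\arch(\cR) \geq n$.

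Combining the given $s_0 > s_1$ with the lemma yields the strict chain $s_0 > s_1 > \ldots > s_{n-1}$, so $|[r_{n-1}]| \geq n$ and $t := r_{n-1} \in R^{>0}$ is the desired element. The only subtle point in the argument — which I expect to be the main place one could slip — is the iterated absorption step: one must use that each individual $r_i$ ($i \leq j$) is absorbed by $v$, not merely that their sum is bounded by $r_j$, in order to legitimately peel them off one at a time from $s_1$ and $s_0$. Commutativity of $\oplus$ is what allows this regrouping, and the monotonicity of $\oplus$ (translation-invariance of $\leq$) is what upgrades $r_j \oplus v = v$ to $r_i \oplus v = v$ for all $i \leq j$.
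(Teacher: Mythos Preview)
Your proof is correct and follows essentially the same approach as the paper: form the partial sums $s_k = r_k \oplus \ldots \oplus r_{n-1}$, observe they all lie in $[r_{n-1}]$, and show the chain is strictly decreasing. The only tactical difference is in establishing strict decrease: the paper argues by forward induction (if $s_{i+1} < s_i$ but $s_{i+2} = s_{i+1}$, then $s_i = r_i \oplus s_{i+1} \leq r_{i+1} \oplus s_{i+2} = s_{i+1}$, contradiction), while you argue by direct absorption (any single equality $s_j = s_{j+1}$ collapses all earlier sums to $v$, forcing $s_0 = s_1$). Both arguments are valid and rest on the same monotonicity observation.
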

\begin{proof}
Suppose $\arch(\cR)\geq n$. We may clearly assume $n\geq 2$. Fix $r_1,\ldots,r_n\in R$, such that $r_1\leq\ldots\leq r_n$ and $r_2\p\ldots\p r_n<r_1\p\ldots\p r_n$. Given $1\leq i\leq n$, let $s_i=r_i\p\ldots\p r_n$. Since $r_1\leq \ldots\leq r_n$, we have $s_i\in[r_n]$ for all $i$. We prove, by induction on $i$, that $s_{i+1}<s_i$. The base case $s_2<s_1$ is given, so assume $s_{i+1}<s_i$. Suppose, for a contradiction, that $s_{i+1}\leq s_{i+2}$. Then
$$
s_i=r_i\p s_{i+1}\leq r_i\p s_{i+2}\leq r_{i+1}\p s_{i+2}=s_{i+1},
$$
which contradicts the induction hypothesis. Altogether, we have $|[r_n]|\geq n$.
\end{proof}

Combining this result with Corollary \ref{cor:NFSOP} and Theorem \ref{thm:SOR}, we obtain the following numeric upper bound for the strong order rank of $\TR{\cR}$.

\begin{corollary}
If $\cR$ is a Urysohn monoid then $\SO(\cU_\cR)\leq|R^{>0}|$.
\end{corollary}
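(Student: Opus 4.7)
The plan is to reduce the statement to a bound on $\arch(\cR)$ via Theorem \ref{thm:SOR}, and then split on the cardinality of $R^{>0}$. By Corollary \ref{cor:NFSOP}, $\TR{\cR}$ is $\NFSOP$, so $\SO(\cU_\cR)$ is defined and lies in $\omega+1$; by Theorem \ref{thm:SOR} it equals $\arch(\cR)$. Thus it suffices to show $\arch(\cR)\leq|R^{>0}|$.

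If $R^{>0}$ is infinite, then $|R^{>0}|\geq\aleph_0$, and since $\arch(\cR)\leq\omega=\aleph_0$, the desired inequality is immediate. So the only real content is the case when $R^{>0}$ is finite, say $|R^{>0}|=N$.

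Assume $R^{>0}$ is finite with $|R^{>0}|=N$, and suppose for contradiction that $\arch(\cR)\geq N+1$ (this covers both $\arch(\cR)=\omega$ and $\arch(\cR)\geq N+1$ finite). Apply Proposition \ref{prop:SOclasssize} with $n=N+1$ to obtain some $t\in R^{>0}$ with $|[t]|\geq N+1$. But $[t]\subseteq R^{>0}$, so $|[t]|\leq N$, a contradiction. Hence $\arch(\cR)\leq N=|R^{>0}|$, which completes the proof.

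There is essentially no obstacle here, since all the heavy lifting has already been done: Theorem \ref{thm:SOR} converts the model-theoretic quantity $\SO(\cU_\cR)$ into the algebraic quantity $\arch(\cR)$, and Proposition \ref{prop:SOclasssize} bounds $\arch(\cR)$ in terms of the size of an archimedean class. The only subtlety worth flagging is the convention for comparing $\SO(\cU_\cR)\in\omega+1$ with a cardinal $|R^{>0}|$, which is why the infinite case must be handled separately (even though it is trivial).
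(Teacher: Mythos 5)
Your proof is correct and follows essentially the same route as the paper, which obtains the corollary by combining Corollary \ref{cor:NFSOP}, Theorem \ref{thm:SOR}, and Proposition \ref{prop:SOclasssize}; your only cosmetic difference is invoking Proposition \ref{prop:SOclasssize} contrapositively at $n=|R^{>0}|+1$ rather than directly at $n=\arch(\cR)$, and handling the (trivial) infinite case explicitly.
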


For the final result of this section, we consider a fixed integer $n>0$. Note that any finite distance monoid is Urysohn (by general \Fraisse\ theory or Theorem \ref{thm:QE}). We have shown that if $\cR$ is a distance monoid, with $|R^{>0}|=n$, then $1\leq \SO(\cU_\cR)\leq n$. The next result addresses the extreme cases.

\begin{theorem}\label{thm:unique}
Fix $n>0$ and suppose $\cR$ is a distance monoid, with $|R^{>0}|=n$.
\begin{enumerate}[$(a)$]
\item $\arch(\cR)=1$ if and only if $\cR\cong(\{0,1,\ldots,n\},\max,\leq,0)$.
\item $\arch(\cR)=n$ if and only if $\cR\cong\cR_n=(\{0,1,\ldots,n\},+_n,\leq,0)$.
\end{enumerate}
\end{theorem}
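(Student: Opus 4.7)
My plan is to treat the two parts in order, with part (a) being essentially immediate and part (b) requiring the real work, concentrated in the forward direction.

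For part (a), note that the condition $\arch(\cR)\leq 1$ says exactly that $r\p s=s$ whenever $r\leq s$, which is the ultrametric condition. Thus $\p=\max$. Enumerating the nonzero elements $r_1<r_2<\ldots<r_n$ of $R$, the map $r_i\mapsto i$ (and $0\mapsto 0$) is an $\LDS$-isomorphism onto $(\{0,1,\ldots,n\},\max,\leq,0)$. The converse is trivial since $\max$ is ultrametric so has archimedean complexity $1$.

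For part (b), the converse is again easy: in $\cR_n$ one computes directly (or from Proposition \ref{prop:archSO}(b), since $\cR_n$ is archimedean with $\sup R^{>0}=n$ and $\inf R^{>0}=1$) that $\arch(\cR_n)=n$. So the heart of the argument is to show that a distance monoid $\cR$ with $|R^{>0}|=n$ and $\arch(\cR)=n$ must be isomorphic to $\cR_n$. I first apply Proposition \ref{prop:SOclasssize} to obtain some $t\in R^{>0}$ with $|[t]|\geq n$; since $|R^{>0}|=n$, this forces $[t]=R^{>0}$, i.e.\ $\cR$ is archimedean. Let $r=\min R^{>0}$ and $s=\max R^{>0}$ (both exist as $R$ is finite). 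Proposition \ref{prop:archSO}(b) gives $\arch(\cR)=\lceil s/r\rceil=n$, so $(n-1)r<s\leq nr$; combining $s\leq nr$ with $nr\in R$ and $nr\leq s$ (since $s=\max R$) yields $nr=s$.

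The next key step is to show that $r,2r,\ldots,nr$ are pairwise distinct. If some $kr=(k+1)r$ with $1\leq k\leq n-1$, then by induction $jr=kr$ for all $j\geq k$, whence $nr=kr\leq(n-1)r<s$, contradicting $nr=s$. So $r<2r<\ldots<nr=s$ gives $n$ distinct nonzero elements, and since $|R^{>0}|=n$ we conclude $R=\{0,r,2r,\ldots,nr\}$. It remains to identify the operation: in any commutative monoid, $(ir)\p(jr)=(i+j)r$, and if $i+j\leq n$ this is already in our enumeration, while if $i+j>n$ then $(i+j)r\geq nr=s$ and $(i+j)r\leq s$ forces $(i+j)r=nr$. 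Thus the map $kr\mapsto k$ is an $\LDS$-isomorphism from $\cR$ onto $\cR_n$.

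I do not anticipate a serious obstacle; the subtlest point is the argument that $r<2r<\ldots<nr$ are distinct, which relies on combining the saturation of the archimedean ratio ($(n-1)r<s$) with the fact that once the sequence $kr$ stabilizes it stays stabilized. Everything else is either direct computation or an application of the cited propositions about $\arch_\cR(t)$ in the archimedean case.
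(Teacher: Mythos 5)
Your proposal is correct and follows essentially the same route as the paper's proof: part $(a)$ via the ultrametric characterization, and part $(b)$ via Proposition \ref{prop:SOclasssize} to get $\cR$ archimedean, then Proposition \ref{prop:archSO}$(b)$ to force $(n-1)r<s=nr$ and hence $R^{>0}=\{r,2r,\ldots,nr\}$. The only difference is that you spell out the details (distinctness of $r,2r,\ldots,nr$ and the identification of $\p$ with truncated addition) that the paper dismisses with ``from this, we clearly have $\cR\cong\cR_n$,'' and those details are argued correctly.
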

\begin{proof}
Part $(a)$. We have already shown that $\arch(\cR)=1$ if and only if $\cR$ is ultrametric, in which case the result follows. Indeed, if $\cR$ is ultrametric, then $\cR\cong (S,\max,\leq,0)$ for any linear order $(S,\leq,0)$ with least element $0$ and $n$ nonzero elements.

Part $(b)$. We have already observed that $\arch(\cR_n)=n$, so it suffices to assume $\arch(\cR)=n$ and show $\cR\cong \cR_n$. If $\arch(\cR)=n$ then, by Proposition \ref{prop:SOclasssize}, there is $t\in R^{>0}$, with $|[t]|\geq n$, and so $R^{>0}=[t]$. Therefore $\cR$ is archimedean and $\arch_\cR(t)=n$. If $r=\min R^{>0}$ and $s=\max R^{>0}$ then we must have $(n-1)r<s=nr$, and so $R^{>0}=\{r,2r,\ldots,nr\}$. From this, we clearly have $\cR\cong\cR_n$. 
\end{proof}

\section{Imaginaries and Hyperimaginaries}\label{sec:EHI}

In this section, we give some results concerning the question that originally motivated Casanovas and Wagner \cite{CaWa} to consider the Urysohn spaces $\UR{\cR_n}$. In particular, if we replace $\cR_n$ with $\cS_n=(S_n,+_1,\leq,0)$, where $S_n=\{0,\frac{1}{n},\frac{2}{n},\ldots,1\}$, then, as $\cR_n\cong\cS_n$, we can essentially think of $\UR{\cR_n}$ and $\UR{\cS_n}$ as ``isomorphic" Urysohn spaces. The advantage of working with $\cS_n$ is that we can coherently define an $\caL_{\Q\cap[0,1]}$-theory $T^*_\infty=\bigcup_{n<\omega}\TR{\cS_n}$. In \cite{CaWa}, Casanovas and Wagner show that $T^*_\infty$ does not eliminate hyperimaginaries. In \cite[Proposition 7.5]{CoDM1}, we verified that $T^*_\infty=\TR{\cQ_1}$. This is not observed in \cite{CaWa}, although the authors do describe the non-eliminable hyperimaginaries as resulting from infinitesimal distance. We will refine and generalize the results of \cite{CaWa} for arbitrary Urysohn monoids, in order to obtain necessary conditions for elimination of hyperimaginaries in $\TR{\cR}$. Along the way, we also characterize weak elimination of imaginaries. 

 Given a complete theory $T$, a monster model $\M$, a tuple $\abar\in\M$, and a 0-type-definable equivalence relation $E(\xbar,\ybar)$, with $\ell(\abar)=\ell(\xbar)$, we let $[\abar]_E$ denote the $E$-equivalence class of $\abar$, and use $\abar_E$ to denote the hyperimaginary determined by $[\abar]_E$. If $\ell(\xbar)$ is finite and $E$ is $0$-definable, then $\abar_E$ is an \emph{imaginary}. If $\sigma\in\Aut(\M)$ and $\abar_E\in\M^{\heq}$ then $\sigma(\abar_E)=\abar_E$ if $E(\abar,\sigma(\abar))$ holds. If $e,e'\in\M^{\eq}$ then $e'\in\eacl(e)$ if $\{\sigma(e'):\sigma\in\Aut(\M/e)\}$ is finite, and $e'\in\edcl(e)$ if $\sigma(e')=e'$ for all $\sigma\in\Aut(\M/e)$.

\begin{definition}\label{def:EHI}
$~$
\begin{enumerate}
\item $T$ has \textbf{elimination of hyperimaginaries} if, for any $e\in\M^{\heq}$, there is a small set $A\subset\M^{\eq}$ such that $\Aut(\M/e)=\Aut(\M/A)$.
\item Given an imaginary $e\in\M^{\eq}$, a \textbf{canonical parameter} (resp. \textbf{weak canonical parameter}) for $e$ is a finite real tuple $\cbar\in\M$ such that $\cbar\in\edcl(e)$ (resp. $\cbar\in\eacl(e)$) and $e\in\edcl(\cbar)$. 
\item $T$ has \textbf{(weak) elimination of imaginaries} if every imaginary has a (weak) canonical parameter.
\end{enumerate}
\end{definition}

We will use following characterization of elimination of hyperimaginaries (see \cite[Proposition 18.2]{Cabook}). 

\begin{proposition}\label{prop:ehi}
The following are equivalent.
\begin{enumerate}[$(i)$]
\item $T$ has elimination of hyperimaginaries.
\item Let $E(\xbar,\ybar)$ be a $0$-type-definable equivalence relation, with $\xbar=(x_i)_{i<\mu}$, and fix a real tuple $\abar=(a_i)_{i<\mu}$. Then there is a sequence $(E_i(\xbar^i,\xbar^i))_{i<\lambda}$ of $0$-definable $n_i$-ary equivalence relations, with $\xbar^i=(x^i_{j_1},\ldots,x^i_{j_{n_i}})$ and $\ybar^i=(y^i_{j_1},\ldots,y^i_{j_{n_i}})$ for some $j_1<\ldots<j_{n_i}<\mu$, such that, for all $\bbar,\bbar'\models\tp(\abar)$, $E(\bbar,\bbar')$ holds if and only if, for all $i<\lambda$, $E_i(\bbar,\bbar')$ holds.
\end{enumerate}
\end{proposition}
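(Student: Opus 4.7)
The plan is to prove both directions directly, exploiting the fact that every imaginary is represented by a $0$-definable equivalence class on a finite real tuple, and that $\bar a_E \in \M^{\heq}$ is characterized by the condition $\sigma(\bar a_E)=\bar a_E \iff E(\bar a,\sigma(\bar a))$ for $\sigma \in \Aut(\M)$. No heavy machinery from neostability is needed; the argument is essentially bookkeeping using the Galois-theoretic description of $\edcl$ and the compactness/homogeneity of $\M$.

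For $(ii)\Rightarrow(i)$, fix a hyperimaginary $e=\bar a_E$, with $E$ and $\bar a=(a_i)_{i<\mu}$ as in (ii). Apply (ii) to produce the sequence $(E_i)_{i<\lambda}$ of $0$-definable finitary equivalence relations on subtuples $\bar x^i$. Set
\[
A = \{\,[\bar a|_{\bar x^i}]_{E_i} : i<\lambda\,\}\subset \M^{\eq}.
\]
I then verify $\Aut(\M/e)=\Aut(\M/A)$: an automorphism $\sigma$ fixes $e$ iff $E(\bar a,\sigma(\bar a))$; by (ii) this is equivalent to $E_i(\bar a|_{\bar x^i},\sigma(\bar a|_{\bar x^i}))$ for all $i$; and this is exactly the statement that $\sigma$ fixes each imaginary in $A$.

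For $(i)\Rightarrow(ii)$, fix $E$ and $\bar a$, and set $e=\bar a_E$. Take a small $A\subset\M^{\eq}$ from (i) with $\Aut(\M/e)=\Aut(\M/A)$. Each $a'\in A$ lies in $\edcl(e)\subseteq\dcl^{\eq}(\bar a)$, so by Galois theory I can write $a'=h_{a'}(\bar a)$ for some $0$-definable function $h_{a'}$; as $h_{a'}$ is a first-order formula, it depends on only finitely many coordinates, giving a finite subtuple $\bar x^{a'}=\bar a|_{J_{a'}}$ and a $0$-definable equivalence relation
\[
E_{a'}(\bar x^{a'},\bar y^{a'}) \;:=\; h_{a'}(\bar x^{a'}) = h_{a'}(\bar y^{a'}).
\]
I need to verify that the family $(E_{a'})_{a'\in A}$ captures $E$ on $\tp(\bar a)$. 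The key observation, which I expect to be the main point requiring care, is the following invariance property: if $\bar b,\bar b'\models\tp(\bar a)$ and $E(\bar b,\bar b')$, then $h_{a'}(\bar b)=h_{a'}(\bar b')$ for each $a'\in A$. This is because $h_{a'}(\bar b)\in\edcl(\bar b_E)$ (transporting the relation $h_{a'}(\bar a)\in\edcl(\bar a_E)$ by an automorphism sending $\bar a\mapsto \bar b$), and $\bar b_E=\bar b'_E$ forces $h_{a'}(\bar b)=h_{a'}(\bar b')$.

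Conversely, if $\bar b,\bar b'\models\tp(\bar a)$ and $\neg E(\bar b,\bar b')$, I pick $\sigma\in\Aut(\M)$ with $\sigma(\bar b)=\bar b'$; then $\sigma(\bar b_E)\neq \bar b_E$, so by (i) applied after transporting $A$ to $\bar b$ via another automorphism (or equivalently, replaying the argument with $\bar b$ in place of $\bar a$), some $a'\in A$ has $h_{a'}(\bar b)\neq h_{a'}(\bar b')$, i.e.\ $\neg E_{a'}(\bar b,\bar b')$. The main obstacle is making this transport precise: the set $A$ is chosen for $\bar a$, but the equivalences $E_{a'}$ are $0$-definable and therefore work uniformly on all realizations of $\tp(\bar a)$, so the argument goes through once one checks that the $h_{a'}$ can be taken as $0$-definable partial functions on $\tp(\bar a)$. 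Combining the two directions yields that $E$ and $\bigwedge_i E_i$ agree on $\tp(\bar a)^2$, which is the conclusion of (ii).
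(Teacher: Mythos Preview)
The paper does not prove this proposition; it is quoted from Casanovas's book \cite[Proposition 18.2]{Cabook} and used as a black box. Your argument is therefore not being compared against anything in the paper itself, but it is essentially the standard proof and is correct in outline.

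One step deserves a sharper formulation. In the forward half of $(i)\Rightarrow(ii)$ you write that $h_{a'}(\bar b)\in\edcl(\bar b_E)$ and then claim ``$\bar b_E=\bar b'_E$ forces $h_{a'}(\bar b)=h_{a'}(\bar b')$.'' Merely knowing that both $h_{a'}(\bar b)$ and $h_{a'}(\bar b')$ lie in $\edcl(\bar b_E)$ does not by itself make them equal. The clean way to finish is: choose $\sigma\in\Aut(\M)$ with $\sigma(\bar b)=\bar b'$ (possible since $\bar b\equiv\bar b'$); then $E(\bar b,\bar b')$ gives $\sigma\in\Aut(\M/\bar b_E)$, and after conjugating by an automorphism $\tau$ sending $\bar a$ to $\bar b$ you have $\Aut(\M/\bar b_E)=\Aut(\M/\tau(A))$, so $\sigma$ fixes $h_{a'}(\bar b)=\tau(a')$; finally $\sigma(h_{a'}(\bar b))=h_{a'}(\bar b')$ since $h_{a'}$ is $0$-definable. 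This is exactly the mechanism you invoke in the contrapositive direction, so the fix is just to make it explicit here as well. A second minor point: the relation $E_{a'}(\bar x^{a'},\bar y^{a'}):=\bigl(h_{a'}(\bar x^{a'})=h_{a'}(\bar y^{a'})\bigr)$ is only an equivalence relation on $\dom(h_{a'})$; to match the literal statement of $(ii)$ you should remark that $\dom(h_{a'})$ is $0$-definable and extend $E_{a'}$ trivially off it, which is routine.
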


We first verify that, if $\cR$ is a nontrivial Urysohn monoid, then $\TR{\cR}$  does not have elimination of imaginaries. Specifically, we will show that, as is often the case with \Fraisse\ limits in symmetric relational languages, finite imaginaries are not eliminated.  

\begin{lemma}\label{prop:ehigen}
Let $\M$ be a monster model of a complete first-order theory $T$. Assume that $\acl(C)=C$ for all $C\subset\M$. Fix $n>1$. Given $\abar=(a_1,\ldots,a_n)\in\M^n$ and $f\in\Sym(1,\ldots,n)$, let $\abar_f=(a_{f(1)},\ldots,a_{f(n)})$. Let $E_n$ be the $0$-definable equivalence relation on $\M^n$ such that, given $\abar,\bbar\in\M^n$,
$$
E_n(\abar,\bbar)\miff \bbar=\abar_f\text{ for some $f\in\Sym(1,\ldots,n)$}.
$$
Suppose $\abar\in\M^n$ is a tuple of pairwise distinct elements such that $\abar_f\equiv\abar$ for all $f\in\Sym(1,\ldots,n)$. Then $\abar_{E_n}$ does not have a canonical parameter.
\end{lemma}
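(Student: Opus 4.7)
The plan is to argue by contradiction: suppose $\cbar = (c_1, \ldots, c_k) \in \M$ is a canonical parameter for $\abar_{E_n}$, and extract a contradiction from the interaction between the triviality of algebraic closure and the full symmetry of $\abar$.

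First, I would use the condition $\cbar \in \edcl(\abar_{E_n})$ to locate $\cbar$ inside $\abar$. Any automorphism of $\M$ fixing $\abar$ pointwise also fixes the class $[\abar]_{E_n}$, and hence fixes $\cbar$; so $\cbar \in \dcl(\abar)$. Under the standing hypothesis $\acl(C) = C$, this gives $c_j \in \acl(\abar) = \{a_1, \ldots, a_n\}$ for every $j$, so I can write $\cbar = (a_{i_1}, \ldots, a_{i_k})$ for some indices $i_j \in \{1, \ldots, n\}$.

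Next, I would exploit the symmetry hypothesis to squeeze $\cbar$ down to the empty tuple. For each $f \in \Sym(1, \ldots, n)$, the assumption $\abar_f \equiv \abar$ yields an automorphism $\sigma_f \in \Aut(\M)$ with $\sigma_f(\abar) = \abar_f$, so $\sigma_f(a_i) = a_{f(i)}$ for every $i$. Since $\sigma_f(\abar) = \abar_f \in [\abar]_{E_n}$, this $\sigma_f$ fixes $\abar_{E_n}$ and therefore fixes $\cbar$, giving $(a_{f(i_1)}, \ldots, a_{f(i_k)}) = (a_{i_1}, \ldots, a_{i_k})$. Pairwise distinctness of the $a_i$ then forces $f(i_j) = i_j$ for every $j \leq k$; as $f$ ranges over all of $\Sym(1, \ldots, n)$ and $n > 1$, no such $i_j$ can exist, so $\cbar$ must be the empty tuple.

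To finish, I would use the remaining condition $\abar_{E_n} \in \edcl(\cbar) = \edcl(\emptyset)$, which asserts that the class $[\abar]_{E_n}$ is $\Aut(\M)$-invariant. But $\acl(\emptyset) = \emptyset$ forces $\tp(a_1)$ to have infinitely many realizations in the monster, so I can pick $a_1' \equiv a_1$ with $a_1' \notin \{a_1, \ldots, a_n\}$ and an automorphism $\tau$ sending $a_1$ to $a_1'$; then $\tau(\abar)$ has first coordinate outside $\{a_1, \ldots, a_n\}$ and so fails to be a permutation of $\abar$, contradicting the invariance of $[\abar]_{E_n}$. The crucial step in this plan is the middle one, where trivial algebraic closure together with full $\Sym(n)$-symmetry collapses any would-be canonical parameter; the rest is routine bookkeeping in $\M^{\eq}$ and $\M^{\heq}$.
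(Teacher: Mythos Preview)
Your proof is correct and uses the same key ingredients as the paper's: the automorphisms $\sigma_f$ witnessing $\abar_f\equiv\abar$, and the triviality of algebraic closure to find a conjugate of $\abar$ (or $a_1$) lying outside $\{a_1,\ldots,a_n\}$. The organization differs slightly: the paper does a direct case split on whether $\cbar$ meets $\abar$ or not, handling each case separately, whereas you insert a preliminary step showing $\cbar\in\dcl(\abar)\subseteq\acl(\abar)=\{a_1,\ldots,a_n\}$, which forces every coordinate of $\cbar$ to already be some $a_i$. This extra localization eliminates the paper's Case~2 in general form and collapses everything to the empty-tuple case, making the argument a touch more streamlined; but the substance is the same.
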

\begin{proof}
Fix $n>1$ and $E_n$ as in the statement. Let $\abar\in\M^n$ be a tuple of distinct elements such that $\abar_f\equiv\abar$ for all $f\in\Sym(1,\ldots,n)$. Let $e=\abar_{E_n}$. For any $f\in\Sym(1,\ldots,n)$, we may fix $\sigma_f\in\Aut(\M)$ such that $\sigma_f(\abar)=\abar_f$. 

Suppose, toward a contradiction, that $\cbar$ is a canonical parameter for $e$. Then $\cbar\in\dcl^{\eq}(e)$ and so, since $\sigma_f(e)=e$ for all $f$, we have $\sigma_f(\cbar)=\cbar$ for all $f$. 

\noindent\textit{Case 1}: $\cbar$ contains $a_i$ for some $1\leq i\leq n$.

Since $n>1$ we may fix $j\neq i$ and let $f$ be a permutation of $\{1,\ldots,n\}$ such that $f(i)=j$. Then $\sigma_f(\cbar)=\cbar$ implies $a_i=a_j$, which is a contradiction.

\noindent\textit{Case 2}: $\cbar$ is disjoint from $\abar$. 

Since $\acl(\cbar)=\cbar$, it follows that $\tp(\abar/\cbar)$ has infinitely many realizations, and so we may fix $\abar'\equiv_{\cbar} \abar$ such that $\abar'\backslash\abar\neq\emptyset$. If $\sigma\in\Aut(\M/\cbar)$ is such that $\sigma(\abar)=\abar'$ then, by assumption, $\sigma(e)=e$ and so $E_n(\abar,\abar')$ holds, which is a contradiction. 
\end{proof}

\begin{corollary}\label{cor:EI}
If $\cR$ is a nontrivial Urysohn monoid then $\TR{\cR}$ does not have elimination of imaginaries.
\end{corollary}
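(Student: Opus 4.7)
The plan is to apply Lemma~\ref{prop:ehigen} with $n = 2$ to a pair of distinct points sitting at a common positive distance; since the language contains only symmetric relations, the lemma will produce a non-eliminable imaginary, namely the unordered pair.

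First I produce the pair. Since $\cR$ is nontrivial, I can fix $r \in R^{>0}$, and then by universality (Proposition~\ref{saturated}) find distinct $a_1, a_2 \in \MU{\cR}$ with $d(a_1, a_2) = r$. By quantifier elimination for $\TR{\cR}$ combined with the symmetry of $d$, the tuples $(a_1, a_2)$ and $(a_2, a_1)$ realize the same type. Setting $\abar = (a_1, a_2)$, this gives $\abar_f \equiv \abar$ for every $f \in \Sym(1,2)$, fulfilling the tuple hypothesis of the lemma.

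Next I verify the algebraic closure condition. By inspection, the proof of Lemma~\ref{prop:ehigen} uses $\acl(C)=C$ only for $C = \cbar$ a finite canonical parameter, so it suffices to treat the finite case. Given a finite $\cbar \subset \MU{\cR}$ and $a \notin \cbar$, the value $\alpha := \min_{c \in \cbar} d(a,c)$ is positive in $\cR^*$ because it is the minimum of finitely many positive elements of $R^*$. Extending $\cbar$ by new points $\{a_i : i < \omega\}$, with $d(a_i, c) = d(a,c)$ for all $c \in \cbar$ and $d(a_i, a_j) = \alpha$ for $i \neq j$, yields a valid $\cR^*$-metric space (the non-trivial triangle inequalities reduce to $\alpha \leq 2d(a,c)$, which holds by choice of $\alpha$), which embeds in $\MU{\cR}$ by universality. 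This gives infinitely many realizations of $\tp(a/\cbar)$, so $a \notin \acl(\cbar)$.

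Applying Lemma~\ref{prop:ehigen} to $\abar = (a_1, a_2)$ with the swap-equivalence $E_2$ then shows that $\abar_{E_2}$ has no canonical parameter, so $\TR{\cR}$ does not have elimination of imaginaries. The only subtle point is that $\acl(C) = C$ may fail for certain infinite $C$ in $\TR{\cR}$, precisely because of the infinitesimal-distance phenomena driving the subsequent analysis of hyperimaginaries; but for finite canonical parameters this is not an issue, so the lemma goes through.
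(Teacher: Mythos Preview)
Your proof is correct and follows the same approach as the paper: both verify the hypotheses of Lemma~\ref{prop:ehigen}, with the paper doing so for arbitrary $n$ (via an equilateral simplex) and invoking disjoint amalgamation of $\cK_\cR$ for the triviality of $\acl$, while you spell out the $n=2$ case and the $\acl$ argument explicitly.

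One correction: your closing caveat is mistaken. In fact $\acl(C)=C$ holds for \emph{all} $C\subset\MU{\cR}$, not just finite ones. By Theorem~\ref{thm:R*}$(b)(ii)$, the non-maximal element $0\in R$ has an immediate successor $0^+$ in $R^*$, so every positive distance in $\MU{\cR}$ is at least $0^+$; hence $d(a,C)\geq 0^+>0$ whenever $a\notin C$, and your explicit construction (with $\alpha$ replaced by any positive element $\leq 2d(a,C)$) goes through unchanged. The ``infinitesimal-distance phenomena'' you allude to concern elements like $0^+$ itself being idempotent yet not in $R$---they do not produce points at distance arbitrarily close to $0$ from a given set.
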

\begin{proof}
We verify that $\TR{\cR}$ satisfies the conditions of Lemma \ref{prop:ehigen}. First, the fact that $\acl(C)=C$ for all $C\subset\MU{\cR}$ follows from quantifier elimination and that the \Fraisse\ class $\cK_\cR$ has disjoint amalgamation. Next, given $n>0$, if we fix some $r\in R^{>0}$ then there is $\abar=(a_1,\ldots,a_n)\in\MU{\cR}$ such that $d(a_i,a_j)=r$ for all $i\neq j$. In particular, $\abar_f
\equiv \abar$ for all $f\in\Sym(1,\ldots,n)$. 
\end{proof}

The next goal is to characterize weak elimination of imaginaries for $\TR{\cR}$. We adopt the terminology that, given a complete theory $T$, a monster model $\M$ of $T$, $C\subset\M$, and $p\in S_n(C)$, a \textit{$C$-type-definable equivalence relation on $p$} is a $C$-type-definable relation $E(\xbar,\ybar)$ on $\M^n$, with $\ell(\xbar)=\ell(\ybar)=n$, such that $E$ is an equivalence relation on $\{\abar\in\M^n:\abar\models p\}$. If $E$ is $C$-definable, then we say $E$ is a \textit{$C$-definable equivalence relation on $p$}. We will use the following result of Kruckman \cite[Proposition 5.5.3]{Krthesis}.

\begin{proposition}\label{trivEQ}
Let $T$ be a complete theory and $\M$ a monster model of $T$. Suppose that $T$ satisfies the following properties:
\begin{enumerate}[$(i)$]
\item $\acl(C)=C$ for all $C\subset\M$;
\item for all finite $C\subset\M$ and $p\in S_1(C)$, any $C$-definable equivalence relation $E(x,y)$ on $p$ is either equality (i.e., for all $a,b\models p$, $E(a,b)$ if and only if $a=b$) or trivial (i.e., $E(a,b)$ for all $a,b\models p$).
\end{enumerate}
Then, for all finite $C\subset\M$ and $p\in S_n(C)$, any $C$-definable equivalence relation $E(\xbar,\ybar)$ on $p$ is definable in the language of equality. Precisely, there is a subset $I\seq\{1,\ldots,n\}$ and a subgroup $P$ of $\Sym(I)$ such that, for $\abar,\bbar\models p$, 
$$
E(\abar,\bbar)\miff \bigvee_{f\in P}\bigwedge_{i\in I}a_i=b_{f(i)}.
$$
\end{proposition}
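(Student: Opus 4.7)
The plan is to induct on $n$. The base case $n=1$ is immediate from (ii): if $E$ is equality on $p$, set $I=\{1\}$ and $P=\{\mathrm{id}\}$, yielding the formula $x_1=y_1$; if $E$ is trivial on $p$, set $I=\emptyset$ and $P=\{\mathrm{id}_\emptyset\}$, so the disjunction consists of the single empty conjunction, which is vacuously true.

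For the inductive step, fix $\abar=(a_1,\ldots,a_n)\models p$ and consider the $C\abar$-definable set $S:=\{\bbar\in\M^n:\bbar\models p \text{ and } E(\abar,\bbar)\}$. After a preliminary reduction absorbing coordinates of $\abar$ lying in $C$ or coinciding with other coordinates (these feed directly into the final form), one may assume the $a_i$ are pairwise distinct and avoid $C$. Hypothesis (i) then gives $\acl(C\abar)=C\cup\{a_1,\ldots,a_n\}$, so for each $i\in\{1,\ldots,n\}$ the $C\abar$-definable projection $\pi_i(S)\seq p_i(\M)$ is either finite and contained in $\{a_1,\ldots,a_n\}$, or infinite. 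The key technical step is to prove the dichotomy that in the infinite case no additional $C\abar$-definable condition restricts $\pi_i(S)$; equivalently, $\pi_i(S)$ is the full set of realizations of $p_i$ compatible with the equality pattern forced on coordinates of $\bbar$ by membership in $S$. This is where (ii) enters: if a proper $C\abar$-definable refinement $T\subsetneq\pi_i(S)$ existed, it would produce a $C\abar$-definable equivalence relation with two classes on a suitable 1-type extending $p_i$ over $C\abar$, which by (ii) must be equality or trivial, and a cardinality count rules out equality while the choice of $T$ rules out triviality.

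Once the dichotomy is established, define $I:=\{i\in\{1,\ldots,n\}:\pi_i(S)\text{ is finite}\}$; independence of $I$ from the choice of $\abar$ follows from $\Aut(\M/C)$-invariance of $E$, and a further step shows $\pi_i(S)\seq\{a_j:j\in I\}$ for $i\in I$. For each $\bbar\in S$, the equality pattern with $\abar$ on coordinates in $I$ determines a permutation $\sigma_\bbar\in\Sym(I)$ satisfying $a_i=b_{\sigma_\bbar(i)}$ for all $i\in I$; set $P:=\{\sigma_\bbar:\bbar\in S\}$. Reflexivity of $E$ gives $\mathrm{id}_I\in P$, symmetry of $E$ yields closure of $P$ under inversion, and transitivity yields closure of $P$ under composition, so $P$ is a subgroup of $\Sym(I)$. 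Combined with the freeness of coordinates outside $I$ granted by the dichotomy, this gives $\bbar\in S$ iff some $\sigma\in P$ satisfies $a_i=b_{\sigma(i)}$ for all $i\in I$, which is exactly the required form.

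The main obstacle is the coordinate dichotomy. While (ii) is the essential ingredient, its direct application only controls individual 1-types; the difficulty is to combine it with the inductive hypothesis (applied to suitable fibers of $p$ obtained by freezing sub-tuples of $\bbar$) in order to handle interactions among the coordinates of $\bbar$ cleanly. Once the dichotomy and the closure $\pi_i(S)\seq\{a_j:j\in I\}$ for $i\in I$ are in hand, the extraction of the subgroup $P$ and the final formula is a purely formal consequence of the equivalence relation axioms.
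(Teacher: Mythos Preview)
The paper does not contain a proof of this proposition; it is quoted from Kruckman's thesis and used as a black box (the acknowledgments explicitly thank Kruckman for permission to use it prior to publication). So there is no argument in the paper to compare your proposal against.

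As for the proposal itself: the overall architecture is reasonable, and the extraction of $I$ and of the subgroup $P$ from the equality patterns on constrained coordinates is what one expects, with the verification that $P$ is a subgroup following cleanly from the equivalence-relation axioms. However, the argument is not complete, and the gap is exactly where you say it is. Even if you establish that each individual projection $\pi_i(S)$ for $i\notin I$ is the full realization set of the relevant $1$-type, this does not yield the \emph{joint} freeness you invoke at the end: what you actually need is that once the $I$-coordinates of $\bbar$ are matched to $\abar$ via some $\sigma\in P$, \emph{every} completion to a realization of $p$ lies in $S$. That is a statement about an $(n-|I|)$-tuple, not a coordinate-by-coordinate statement, and the interaction among the free coordinates is precisely what the inductive hypothesis has to control. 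Your remark that one should apply induction to ``suitable fibers of $p$ obtained by freezing sub-tuples'' gestures at the right move, but as written no inductive step is actually carried out. A clean way to organize this is to first extract from (i) and (ii) the lemma that over any finite parameter set every definable subset of a non-algebraic complete $1$-type is empty or full, and then iterate by absorbing one free coordinate at a time into the base; without something of that shape, the ``freeness'' claim is asserted rather than proved.
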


The following is an easy consequence of the previous result.

\begin{corollary}\label{cor:trivEQ}
If $T$ is a complete theory satisfiying conditions $(i)$ and $(ii)$ of Proposition \ref{trivEQ}, then $T$ has weak elimination of imaginaries.
\end{corollary}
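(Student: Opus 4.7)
The plan is to produce a weak canonical parameter for an arbitrary imaginary by reducing to the normal form supplied by Proposition~\ref{trivEQ}. Starting from an imaginary $e = \abar_E$, where $\abar \in \M^n$ and $E(\xbar,\ybar)$ is a $0$-definable equivalence relation on $\M^n$, I would restrict $E$ to the type $p = \tp(\abar) \in S_n(\emptyset)$. Since $E$ is already $0$-definable, it qualifies as a $\emptyset$-definable equivalence relation on $p$, so Proposition~\ref{trivEQ} applied with $C = \emptyset$ delivers a subset $I \subseteq \{1,\ldots,n\}$ and a subgroup $P \leq \Sym(I)$ such that
$$
E(\abar',\bbar') \iff \bigvee_{f \in P} \bigwedge_{i \in I} a'_i = b'_{f(i)}
$$
for all $\abar', \bbar' \models p$.

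The candidate weak canonical parameter is then $\cbar := (a_i)_{i \in I}$, and I would verify both defining conditions directly from the displayed normal form. For $e \in \edcl(\cbar)$: any $\sigma \in \Aut(\M/\cbar)$ fixes each coordinate of $\abar$ at indices in $I$, so taking $f$ to be the identity element of $P$ witnesses $E(\abar, \sigma(\abar))$, forcing $\sigma(e) = e$. For $\cbar \in \eacl(e)$: any $\sigma \in \Aut(\M/e)$ sends $\abar$ to an $E$-equivalent tuple in $p$, and the normal form pins down $\sigma(\cbar) = (a_{f(i)})_{i \in I}$ for some $f \in P$, so the orbit of $\cbar$ under $\Aut(\M/e)$ has size at most $|P|$, which is finite.

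The main content has already been absorbed into Proposition~\ref{trivEQ}; what remains is a short bookkeeping verification that $(a_i)_{i \in I}$ is the correct finite tuple to extract from the normal form. The only mildly delicate point is that $P$ is a genuine subgroup of $\Sym(I)$, which guarantees that the identity permutation is available to witness the $\edcl$ direction; the edge case $I = \emptyset$ (in which $E$ is trivial on $p$, and the empty tuple serves as a weak canonical parameter over $\emptyset$) requires no separate treatment. Since Proposition~\ref{trivEQ} is stated for arbitrary finite parameter sets, applying it at $C = \emptyset$ is immediate and no further reduction is needed.
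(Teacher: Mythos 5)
Your proposal is correct and follows essentially the same route as the paper: apply Proposition \ref{trivEQ} with $C=\emptyset$ and $p=\tp(\abar)$, take $\cbar=(a_i)_{i\in I}$, use the identity element of $P$ to get $e\in\edcl(\cbar)$, and use finiteness of $P$ to bound the orbit of $\cbar$ under $\Aut(\M/e)$, giving $\cbar\in\eacl(e)$. No further comment is needed.
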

\begin{proof}
Let $E$ be a $0$-definable equivalence relation on $\M^n$, and fix $\abar\in \M^n$. Let $I\seq\{1,\ldots,n\}$ and $P$ be as in the conclusion of Proposition \ref{trivEQ}, where $C=\emptyset$ and $p=\tp(\abar)$. Let $\cbar=(a_i:i\in I)$. We show $\cbar$ is a weak canonical parameter for $\abar_E$. First, for any $\sigma\in\Aut(\M/\abar_E)$, we have $\abar,\sigma(\abar)\models p$ and $E(\abar,\sigma(\abar))$, and so $\sigma(\cbar)\in\{(c_{f(i)})_{i\in I}:f\in P\}$. Since $P$ is finite, it follows that $\cbar\in\eacl(\abar_E)$. Second, if $\sigma\in\Aut(\M/\cbar)$ then $\abar,\sigma(\abar)\models p$ and $\bigwedge_{i\in I}a_i=\sigma(a_i)$.  Since the identity is in $P$, we have $E(\abar,\sigma(\abar))$, and so $\sigma(\abar_E)=\abar_E$. 
\end{proof}

For the rest of the section, we fix a Urysohn monoid $\cR$.

\begin{definition}
Fix $C\subset\MU{\cR}$ and $p\in S_1(C)$. 
\begin{enumerate}
\item Define $\alpha(p)=d(a,C)$, where $a\models p$ (this does not depend on choice of $a$).
\item Suppose $E(x,y)$ is a $C$-type-definable equivalence relation on $p$. Define $\Gamma(E,p)\seq R^*$ such that $\alpha\in\Gamma(E,p)$ if and only if there are $a,b\models p$ such that $E(a,b)$ and $d(a,b)=\alpha$. Let $\alpha(E,p)=\sup\Gamma(E,p)$.
\end{enumerate}
\end{definition}

\begin{lemma}\label{lem:WEI}
Fix $C\subset\MU{\cR}$ and $p\in S_1(C)$. Suppose $E(x,y)$ is a $C$-type-definable equivalence relation on $p$.
\begin{enumerate}[$(a)$]
\item If $\alpha\in \Gamma(E,p)$ and $\beta\leq\min\{2\alpha,2\alpha(p)\}$, then $\beta\in\Gamma(E,p)$.
\item $\Gamma(E,p)$ is closed downwards and contains $\alpha(E,p)$.
\item If $a,b\models p$, then $E(a,b)$ if and only if $d(a,b)\leq\alpha(E,p)$.
\item If $\alpha(E,p)<2\alpha(p)$ then $\alpha(E,p)\in\eq(\cR^*)$.
\item If $E(x,y)$ is $C$-definable and $\alpha(E,p)<2\alpha(p)$ then $\alpha(E,p)\in \eq(\cR)$.
\end{enumerate}
\end{lemma}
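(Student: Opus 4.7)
The plan is to establish (a) as the main technical step, derive (b) and (c) from it, leverage it again in (d), and handle (e) with a separate definability argument at the end.

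For (a), I would fix witnesses $a, b \models p$ with $E(a,b)$ and $d(a,b) = \alpha$, and aim to produce $a' \models p$ with $E(a,a')$, $d(a,a') = \alpha$, and $d(a',b) = \beta$; transitivity of $E$ then yields $E(a',b)$, giving $\beta = d(a',b) \in \Gamma(E,p)$. To produce $a'$, I consider the partial type over $Cab$ extending $p(x)$ by $d(a,x) = \alpha$ and $d(x,b) = \beta$. The triangle inequality on $\{a,b,x\}$ uses $\beta \leq 2\alpha$, while consistency with triangles involving $c \in C$ uses $\alpha, \beta \leq 2\alpha(p) \leq 2d(a,c)$, so the type is consistent and realized by saturation and quantifier elimination (Theorem \ref{thm:QE}). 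Finally, $\tp(a'/Ca) = \tp(b/Ca)$ because both realize $p$ at distance $\alpha$ from $a$, so an automorphism over $Ca$ sends $b \mapsto a'$; since $E$ is $C$-type-definable, this automorphism preserves $E$ and gives $E(a,a')$.

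Parts (b) and (c) are then routine. Every $\alpha \in \Gamma(E,p)$ satisfies $\alpha \leq 2\alpha(p)$ (since $\alpha = d(a,b) \leq d_{\max}(a,b/C) = 2\alpha(p)$ for $a,b \models p$), so downward closure of $\Gamma(E,p)$ is immediate from (a). The statement $\alpha(E,p) \in \Gamma(E,p)$ is a compactness argument, realizing the partial type $p(x) \cup p(y) \cup \{d(x,y) = \alpha(E,p)\} \cup \{E(x,y)\}$ whose consistency is witnessed by elements of $\Gamma(E,p)$ approaching $\alpha(E,p)$ from below. For (c), one direction is the definition; for the other, $d(a,b) \leq \alpha(E,p)$ gives $d(a,b) \in \Gamma(E,p)$ by (b), so there are $a',b' \models p$ with $E(a',b')$ and $d(a',b') = d(a,b)$, and $(a,b) \equiv_C (a',b')$ by quantifier elimination, so transferring $E$ by an automorphism over $C$ yields $E(a,b)$.

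Part (d) hinges on comparing $\alpha(E,p)$ with $\alpha(p)$. If $\alpha(E,p) \geq \alpha(p)$, then $\alpha(p) \in \Gamma(E,p)$ by (b), and applying (a) with $\alpha = \alpha(p)$ and $\beta = 2\alpha(p)$ forces $2\alpha(p) \in \Gamma(E,p)$, so $2\alpha(p) \leq \alpha(E,p)$, contradicting $\alpha(E,p) < 2\alpha(p)$. Hence $\alpha(E,p) < \alpha(p)$, so by translation invariance $2\alpha(E,p) \leq 2\alpha(p)$, and applying (a) with $\alpha = \beta = \alpha(E,p)$ yields $2\alpha(E,p) \in \Gamma(E,p)$, whence $2\alpha(E,p) \leq \alpha(E,p)$ and equality follows. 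For (e), it remains to show $\alpha(E,p) \in R$, after which (d) completes the proof via $R \cap \eq(\cR^*) = \eq(\cR)$. Since $E$ is $C$-definable, quantifier elimination supplies a quantifier-free $\cL_R(C)$-formula $\phi(x,y)$ defining it. Restricted to $p \times p$, the atomic subformulas involving parameters in $C$ have truth values determined by $p$, so $\phi$ reduces to a Boolean combination of $d(x,y) \leq r$ for $r \in R$, defining a finite union of intervals in $R^*$ with $R$-endpoints. By (c), this set, intersected with the realizable distances $\{\beta \in R^* : \beta \leq 2\alpha(p)\}$, must equal $\{\beta \in R^* : \beta \leq \alpha(E,p)\}$; since $\alpha(E,p) < 2\alpha(p)$ leaves room above the cut, matching shapes forces the upper endpoint of the topmost interval to equal $\alpha(E,p)$, so $\alpha(E,p) \in R$. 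The main obstacle is the amalgamation-plus-automorphism argument in (a), where the type-definability of $E$ must be transported through an automorphism fixing $Ca$; the remaining parts reuse this pattern, with the modest definability bookkeeping in (e) supplying the final ingredient.
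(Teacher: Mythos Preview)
Your proposal is correct and follows essentially the same route as the paper: part $(a)$ is proved by producing a third realization of $p$ via amalgamation and transporting $E$ by an automorphism, parts $(b)$ and $(c)$ are derived from $(a)$ together with a compactness argument for the supremum, part $(d)$ applies $(a)$ at $\alpha(E,p)$ to force idempotency, and part $(e)$ reduces $E$ on $p\times p$ to a Boolean combination of distance formulas and reads off $\alpha(E,p)$ as an endpoint in $R$. One small slip: in part $(d)$ you write ``applying $(a)$ with $\alpha=\beta=\alpha(E,p)$ yields $2\alpha(E,p)\in\Gamma(E,p)$'', but you mean $\alpha=\alpha(E,p)$ and $\beta=2\alpha(E,p)$; with that correction your argument matches the paper's (which phrases it more compactly as $\min\{2\alpha(E,p),2\alpha(p)\}\leq\alpha(E,p)$).
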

\begin{proof}
Part $(a)$. Given such $\alpha,\beta$, fix $a,b\models p$ such that $E(a,b)$ holds and $d(a,b)=\alpha$. Since $\beta\leq\min\{2\alpha,2\alpha(p)\}$, there is $b'\equiv_{Ca}b$, with $d(b,b')=\beta$. Then $E(a,b)$ and $E(a,b')$ implies $E(b,b')$, and so $\beta\in \Gamma(E,p)$, as desired.

Part $(b)$. The first claim follows from part $(a)$ and the fact that $\alpha\leq 2\alpha(p)$ for any $\alpha\in\Gamma(E,p)$. For the second claim, first note that if $\alpha(E,p)$ has an immediate predecessor in $R^*$, then we must have $\alpha(E,p)\in\Gamma(E,p)$. Otherwise, by definition of $\alpha(E,p)$, the type
\begin{multline*}
p(x)\cup p(y)\cup E(x,y)\\
\cup\{d(x,y)\leq r:r\in R,~\alpha(E,p)\leq r\}\cup\{d(x,y)>r:r\in R,~\alpha(E,p)>r\}
\end{multline*}
is finitely satisfiable, and so $\alpha(E,p)\in\Gamma(E,p)$.

Part $(c)$. Fix $a,b\models p$. If $E(a,b)$ holds then $d(a,b)\leq\alpha(E,p)$ by definition. Conversely, suppose $d(a,b)=\beta\leq\alpha(E,p)$. Then $\beta\in\Gamma(E,p)$ by part $(b)$. Therefore, there are $a',b'\models p$ such that $E(a',b')$ holds and $d(a',b')=\beta$. Then we have $ab\equiv_C a'b'$, and so $E(a,b)$ holds.  

Part $(d)$. By parts $(a)$ and $(b)$, we have $\min\{2\alpha(E,p),2\alpha(p)\}\leq \alpha(E,p)$. Therefore, if $\alpha(E,p)<2\alpha(p)$ then it follows that $\alpha(E,p)=2\alpha(E,p)$.

Part $(e)$. Assume $E(x,y)$ is $C$-definable and $\alpha(E,p)<2\alpha(p)$. By part $(d)$, it is enough to show $\alpha(E,p)\in R$. We may write,
$$
E(x,y):=\bigvee_{i=1}^n\bigg(\vphi_i(x)\wedge\theta_i(y)\wedge d(x,y)\in(r_i,s_i]\bigg),
$$
where $r_i,s_i\in R$, and $\vphi_i(x),\theta_i(y)$ are formulas with parameters from $C$ (for convenience, interpret $d(x,y)\in(0,0]$ as the formula $x=y$). We may assume $\vphi_i(x),\theta_i(x)\in p$ for all $i\leq n$. By part $(c)$, we may also assume $r_i<\alpha(E,p)$. Without loss of generality, assume $s_1=\max\{s_1,\ldots,s_n\}$. To finish the proof, we show $\alpha(E,p)=s_1$.

First, suppose $s_1<\alpha(E,p)$. By definition, there are $a,b\models p$ such that $E(a,b)$ holds and $d(a,b)>s_1$, which contradicts the explicit expression for $E(x,y)$. Finally, suppose $\alpha(E,p)<s_1$. Let $\beta=\min\{s_1,2\alpha(p)\}$. Then there are $a,b\models p$ such that $d(a,b)=\beta$. Since $r_1<\beta\leq s_1$, it follows that $E(a,b)$ holds, contradicting $\alpha(E,p)<\beta$. 
\end{proof}

\begin{corollary}\label{cor:0deq}
Suppose $\cR$ is a Urysohn monoid and $E(x,y)$ is a $0$-definable nontrivial equivalence relation on $\U_\cR$. Then $E(x,y)$ is equivalent to $d(x,y)\leq r$ for some $r\in\eq(\cR)$.
\end{corollary}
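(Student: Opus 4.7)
The plan is to derive the corollary as a direct consequence of Lemma~\ref{lem:WEI} applied with $C=\emptyset$ and $p$ the unique $1$-type over $\emptyset$. The first step is to reduce to the ``single type'' setting. Since $\cL_R$ consists only of binary distance-inequalities (with no constants or unary predicates), any two singletons in $\cU_\cR$ realize the same atomic type, and the ultrahomogeneity provided by quantifier elimination upgrades this to $|S_1(\emptyset)|=1$. Write $p$ for this unique $1$-type; then every element of $\cU_\cR$ realizes $p$, so $E(x,y)$ is automatically an $\emptyset$-definable equivalence relation on $p$ in the sense of Section~\ref{sec:EHI}.

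Next, I would compute $\alpha(p)$. By the convention $\inf\emptyset=\sup R^*$ adopted just before Proposition~\ref{saturated}, $\alpha(p)=d(a,\emptyset)=\sup R^*$, and since the maximum of $R^*$ is manifestly idempotent we also have $2\alpha(p)=\sup R^*$. The goal then becomes verifying the hypothesis $\alpha(E,p)<2\alpha(p)$ of Lemma~\ref{lem:WEI}(e). Here is where nontriviality of $E$ enters: pick $a,b\in\cU_\cR$ with $\neg E(a,b)$; by Lemma~\ref{lem:WEI}(c) this gives $\alpha(E,p)<d(a,b)$, and since $d(a,b)\in R\subseteq R^*$ is bounded above by $\sup R^*$, we obtain the strict inequality $\alpha(E,p)<\sup R^*=2\alpha(p)$.

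With this inequality in hand, Lemma~\ref{lem:WEI}(e) (applicable because $E$ is in fact $\emptyset$-definable, not merely type-definable) yields $r:=\alpha(E,p)\in\eq(\cR)$, and Lemma~\ref{lem:WEI}(c) says precisely that $E(x,y)\Leftrightarrow d(x,y)\leq r$ on pairs of realizations of $p$, i.e.\ on all of $\cU_\cR$. That completes the argument. There is no real obstacle: essentially everything has already been packaged into Lemma~\ref{lem:WEI}, and the only content beyond invoking it is the trivial observations that $|S_1(\emptyset)|=1$ and that nontriviality of $E$ forces $\alpha(E,p)$ to be strictly below the top element of $R^*$.
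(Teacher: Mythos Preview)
Your proof is correct and follows essentially the same approach as the paper: apply Lemma~\ref{lem:WEI} with $C=\emptyset$ and $p$ the unique $1$-type, observe $\alpha(p)=\sup R^*$, use nontriviality of $E$ to get $\alpha(E,p)<\sup R^*=2\alpha(p)$, and invoke parts $(c)$ and $(e)$ of the lemma. You have simply spelled out a few details (the idempotence of $\sup R^*$, the explicit appeal to part $(c)$ for the final equivalence) that the paper leaves implicit.
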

\begin{proof}
We apply Lemma \ref{lem:WEI} with $C=\emptyset$ and $p=\{x=x\}$. Then $\alpha(p)=\sup R^*$ and $\alpha(E,p)<\sup R^*$ since $E$ is nontrivial. Therefore $\alpha(E,p)\in \eq(\cR)$.  
\end{proof}

\begin{theorem}\label{thm:WEI}
If $\cR$ is a Urysohn monoid then $\TR{\cR}$ has weak elimination of imaginaries if and only if $\eq^<(\cR)=\{0\}$.
\end{theorem}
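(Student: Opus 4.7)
The plan is to prove both directions using Lemma \ref{lem:WEI} together with the structural observation that the idempotency $r \p r = r$ collapses the triangle inequality inside the ball $[a]_{E_r}$.

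For the ``if'' direction, assume $\eq^<(\cR) = \{0\}$. I verify the two hypotheses of Proposition \ref{trivEQ} and invoke Corollary \ref{cor:trivEQ}. Hypothesis (i), $\acl(C) = C$, follows from quantifier elimination and disjoint amalgamation in $\cK_\cR$, as already used in the proof of Corollary \ref{cor:EI}. For (ii), fix finite $C \subset \MU{\cR}$, $p \in S_1(C)$, and a $C$-definable equivalence relation $E(x,y)$ on $p$. Lemma \ref{lem:WEI}(c) gives $E(a,b)\Leftrightarrow d(a,b)\leq\alpha(E,p)$, and since any two realizations of $p$ lie at distance at most $2\alpha(p)$, the case $\alpha(E,p) \geq 2\alpha(p)$ makes $E$ trivial. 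Otherwise $\alpha(E,p) < 2\alpha(p)$, and Lemma \ref{lem:WEI}(e) places $\alpha(E,p) \in \eq(\cR)$. Under $\eq^<(\cR) = \{0\}$ the only possibilities are $0$ and (when it lies in $R$ and is idempotent) $\sup R$; but $\alpha(E,p) = \sup R$ would force $\sup R < 2\alpha(p) \leq 2\sup R = \sup R$, which is absurd. Hence $\alpha(E,p) = 0$ and $E$ is equality on $p$.

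For the ``only if'' direction, I argue contrapositively. Fix a nonzero non-maximal idempotent $r \in R$ and consider the nontrivial $0$-definable equivalence relation $E_r(x,y) := d(x,y)\leq r$ supplied by Corollary \ref{cor:0deq}. Pick $a \in \MU{\cR}$ and set $B := [a]_{E_r}$; I will show the imaginary $a_{E_r}$ admits no weak canonical parameter. Suppose, toward a contradiction, that $\cbar = (c_1,\ldots,c_n)$ is one. Any $\sigma \in \Aut(\MU{\cR})$ fixing $B$ pointwise preserves $B$ setwise and so lies in $\Aut(\MU{\cR}/a_{E_r})$; together with $\cbar \in \eacl(a_{E_r})$ and $\acl(B) = B$, this forces $\cbar \subset B$, since a coordinate outside $B$ would have infinite orbit under the pointwise stabilizer of $B$.

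So $\cbar \subset B$, and the crux becomes the idempotency estimate: if $\cbar' = (c_1',\ldots,c_n') \in B^n$ realizes $\tp(\cbar)$ and $a^* \in \MU{\cR}$ satisfies $d(a^*, c_i') = d(a, c_i)$ for all $i$, then for any $i$,
$$
d(a^*, a) \leq d(a^*, c_i') \p d(c_i', a) \leq r \p r = r,
$$
so $a^* \in B$ automatically. Hence the partial isometry $c_i \mapsto c_i'$, $a \mapsto a^*$ lifts by ultrahomogeneity to $\sigma \in \Aut(\MU{\cR})$ with $\sigma(\cbar) = \cbar'$ and $\sigma(a) = a^* \in B$, whence $\sigma(B) = [a^*]_{E_r} = B$. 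Since $B$ is infinite (as $r > 0$) and, by the \Fraisse\ property together with $r \p r = r$, admits arbitrarily many copies of $\cbar$ inside itself, this yields infinitely many elements in the orbit of $\cbar$ under $\Aut(\MU{\cR}/a_{E_r})$, contradicting $\cbar \in \eacl(a_{E_r})$. The principal obstacle is exactly this last step: idempotency is what simultaneously forces $a^* \in B$ and keeps fresh copies of $\cbar$ inside $B$, and without it the ball $B$ could conceivably be pinned down by a finite tuple, consistent with the existence of weak canonical parameters in the archimedean case.
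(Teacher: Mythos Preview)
Your ``if'' direction is the paper's argument essentially verbatim (the extra sentence ruling out $\alpha(E,p)=\sup R$ is harmless but unnecessary: from $\alpha(E,p)<2\alpha(p)\leq\sup R^*$ one already has $\alpha(E,p)\in\eq^<(\cR)$).

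Your ``only if'' direction takes a different route from the paper and has a real gap in the reduction to $\cbar\subset B$. You claim that a coordinate $c_i\notin B$ has infinite orbit under the \emph{pointwise} stabilizer of $B$, invoking ``$\acl(B)=B$''. But $B=[a]_{E_r}$ has the same cardinality as $\MU{\cR}$, so the paper's blanket fact $\acl(C)=C$ (stated only for small $C$) does not apply, and there is no homogeneity over $B$ available to turn ``$c_i\notin\acl(B)$'' into an infinite orbit. The paper avoids this issue entirely: when $\cbar$ is nonempty and disjoint from $X$, it does \emph{not} attempt to show $\cbar\notin\eacl(e)$; instead it produces $a'\equiv_{\cbar}a$ with $d(a,a')>r$, so some $\sigma\in\Aut(\MU{\cR}/\cbar)$ moves $e$, witnessing $e\notin\edcl(\cbar)$. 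That is the half of ``weak canonical parameter'' your argument never touches. (If you want to stay on the $\eacl$ side, the correct tool is $\Aut(\MU{\cR}/a)\subseteq\Aut(\MU{\cR}/e)$ together with $\acl(a)=\{a\}$, which already shows every real $c\neq a$ has infinite $\Aut(\MU{\cR}/e)$-orbit; no appeal to the pointwise stabilizer of $B$ is needed.) You also omit the case $\cbar=\emptyset$, where your idempotency estimate has no $c_i'$ to route through and the contradiction must come from $e\notin\edcl(\emptyset)$.

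Once $\cbar\subset B$ is granted and $\cbar$ is nonempty, your idempotency argument is correct and pleasant, though the paper's corresponding Case~2 is shorter: if a single $c\in\cbar$ lies in $X$, then any automorphism sending $c$ to any $b\in X$ fixes $e$ (since $[c]_{E_r}=[b]_{E_r}=X$), so already that one coordinate has infinite $\Aut(\MU{\cR}/e)$-orbit.
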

\begin{proof}
First, suppose $r\in\eq^<(\cR)$ is nonzero. Let $E_r(x,y)$ denote the equivalence relation $d(x,y)\leq r$. Fix $a\in\MU{\cR}$, and let $e=a_{E_r}$ and $X=[a]_{E_r}$. Note that $0<r<\sup R^*$ implies $X$ is infinite, but not all of $\U_\cR$. We fix a finite real tuple $\cbar$ and show that $\cbar$ is not a weak canonical parameter for $e$. 

\noindent\textit{Case 1}: $\cbar$ is the empty tuple.

Then any automorphism of $\U_\cR$ fixes $X$ setwise. Since there is a unique $1$-type over $\emptyset$, it follows that $X=\U_\cR$, which is a contradiction.

\noindent\textit{Case 2}: There is some $c\in\cbar\cap X$. 

For any $b\in X$, we may fix $\sigma_ b\in\Aut(\MU{\cR})$ such that $\sigma_b(c)=b$. Then $\sigma_b\in\Aut(\MU{\cR}/e)$, and we have shown that any element of $X$ is in the orbit of $c$ under $\Aut(\MU{\cR}/e)$. Since $X$ is infinite, it follows that $c\not\in\eacl(e)$. 

\noindent\textit{Case 3}: $\cbar$ is nonempty and $\cbar\cap X=\emptyset$. 

Let $\alpha=\min\{d(a,c):c\in\cbar\}$. Then $r<\alpha$, by assumption of this case. Moreover, we may find $a'\in\MU{\cR}$ such that $a'\equiv_{\cbar} a$ and $d(a,a')=\alpha$. If $\sigma\in\Aut(\MU{\cR}/\cbar)$ is such that $\sigma(a)=a'$ then, as $\alpha>r$, we have $\sigma(e)\neq e$. Therefore $e\not\in\edcl(\cbar)$. 

Conversely, suppose $\eq^<(\cR)=\{0\}$. We show that $\TR{\cR}$ satisfies conditions $(i)$ and $(ii)$ of Proposition \ref{trivEQ}. We have already observed condition $(i)$ in the proof of Corollary \ref{cor:EI}. For condition $(ii)$, fix a finite set $C\subset\MU{\cR}$, $p\in S_1(C)$, and a $C$-definable equivalence relation $E(x,y)$ on $p$. By Lemma \ref{lem:WEI}$(c)$, we have that, for any $a,b\models p$, $E(a,b)$ holds if and only if $d(a,b)\leq\alpha(E,p)$. If $\alpha(E,p)=2\alpha(p)$, then it follows that $E(a,b)$ holds for all $a,b\models p$, and so $E(x,y)$ is trivial on $p$. On the other hand, if $\alpha(E,p)<2\alpha(p)$ then, by Lemma \ref{lem:WEI}$(e)$, along with the assumption that $\eq^<(\cR)=\{0\}$, it follows that $\alpha(E,p)=0$. In particular, $E(x,y)$ coincides with equality on $p$.
\end{proof}

\begin{corollary}
Suppose $\cR$ is a Urysohn monoid, and assume that $\SO(\cU_\cR)<\omega$ (i.e. $\arch(\cR)<\omega$). Then $\TR{\cR}$ has weak elimination of imaginaries if and only if $\cR$ is archimedean.
\end{corollary}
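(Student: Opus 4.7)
The plan is to combine Theorem \ref{thm:WEI} with a purely algebraic argument relating $\eq^<(\cR)$ and archimedeanness under the assumption $\arch(\cR)<\omega$. By Theorem \ref{thm:WEI}, $\TR{\cR}$ has weak elimination of imaginaries if and only if $\eq^<(\cR)=\{0\}$, so it suffices to prove that, when $\arch(\cR)<\omega$, the condition $\eq^<(\cR)=\{0\}$ is equivalent to $\cR$ being archimedean.

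For the easier direction ($\cR$ archimedean $\Rightarrow \eq^<(\cR)=\{0\}$, which actually does not require $\arch(\cR)<\omega$), suppose $r\in\eq(\cR)$ with $r>0$. Then $nr=r$ for every $n>0$. If there were any $s\in R$ with $r<s$, archimedeanness would give some $n>0$ with $s\leq nr=r$, a contradiction. Hence $r=\sup R$, so $r\notin\eq^<(\cR)$.

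For the harder direction, assume $\eq^<(\cR)=\{0\}$ and $\arch(\cR)=k<\omega$. The key observation is that, for every $r\in R$, the element $kr$ lies in $\eq(\cR)$. Indeed, applying the defining condition of $\arch(\cR)\leq k$ with $r_0=r_1=\ldots=r_k=r$ gives $(k+1)r=kr$; iterating, $(k+j)r=kr$ for every $j\geq 0$, so in particular $kr\p kr=(2k)r=kr$, showing $kr$ is idempotent. Given $r>0$, then $kr\geq r>0$, so $kr\notin\{0\}$; the hypothesis $\eq^<(\cR)=\{0\}$ forces $kr=\sup R$ (in particular, $R$ has a maximum). Thus for every $s\in R$ and every $r>0$, we have $s\leq \sup R=kr$, witnessing that $\cR$ is archimedean.

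The main step (and only nontrivial one) is the observation that $\arch(\cR)\leq k$ forces $kr$ to be idempotent, since this is what lets the finite archimedean complexity interact with the dichotomy coming from $\eq^<(\cR)=\{0\}$; everything else is a short deduction from Theorem \ref{thm:WEI} and the definitions.
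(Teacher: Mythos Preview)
Your proof is correct and follows essentially the same approach as the paper's own argument: both reduce via Theorem \ref{thm:WEI} to the algebraic equivalence between $\eq^<(\cR)=\{0\}$ and archimedeanness, and both use the key observation that $\arch(\cR)\leq k$ forces $kr\in\eq(\cR)$ for every $r\in R$ (an observation the paper had already recorded after Definition \ref{def:arch}). Your write-up just spells out a few more details than the paper's terse version.
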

\begin{proof}
First, note that in general, $\cR$ archimedean implies $\eq^<(\cR)=\{0\}$. Conversely, let $\arch(\cR)=n<\omega$. Then $nr\in\eq(\cR)$ for any $r\in R$. Therefore, $\eq^<(\cR)=\{0\}$ implies $nr=\sup R$ for all $r\in R^{>0}$, and so $\cR$ is clearly archimedean. 
\end{proof}

\begin{definition}
Define 
$$
\heq(\cR)=\{\alpha\in\eq(\cR^*):\alpha<\inf\{r\in\eq(\cR):\alpha\leq r\}\}.
$$
\end{definition}

\begin{theorem}\label{thm:ehi}
Suppose $\cR$ is a nontrivial Urysohn monoid. If $\heq(\cR)\neq\emptyset$ then $\TR{\cR}$ does not have elimination of hyperimaginaries.
\end{theorem}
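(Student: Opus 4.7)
The plan is to fix $\alpha \in \heq(\cR)$ and exhibit an explicit non-eliminable hyperimaginary, namely $a_{E_\alpha}$, where $a \in \MU{\cR}$ is arbitrary and $E_\alpha(x,y)$ is the 0-type-definable equivalence relation $d(x,y) \leq \alpha$, written as $\bigwedge\{d(x,y) \leq r : r \in R,\ \alpha \leq r\}$. The first thing to check is that $E_\alpha$ really is an equivalence relation: reflexivity and symmetry are immediate, and transitivity follows from the triangle inequality together with $\alpha \p \alpha = \alpha$, which holds because $\alpha \in \eq(\cR^*)$.

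Next I would apply Proposition \ref{prop:ehi} in the contrapositive with $\mu = 1$ and $\abar = (a)$. Since the unique 1-type over $\emptyset$ in $\TR{\cR}$ is $\{x = x\}$, elimination of hyperimaginaries would force $E_\alpha$ to be equivalent, globally on $\MU{\cR} \times \MU{\cR}$, to a conjunction of 0-definable equivalence relations in a single variable. Corollary \ref{cor:0deq} restricts each such relation to be either trivial or of the form $d(x,y) \leq r$ for some $r \in \eq(\cR)$. Discarding the trivial conjuncts, one obtains a family $\{r_i : i \in I\} \subseteq \eq(\cR)$ such that $E_\alpha$ is equivalent on the monster to $\bigwedge_{i \in I}\bigl(d(x,y) \leq r_i\bigr)$.

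The conclusion is then reached by a direct comparison. Setting $\beta := \inf_{i \in I} r_i$ in the Dedekind-complete order $R^*$, the conjunction above coincides with $d(x,y) \leq \beta$; its equivalence with $E_\alpha$ forces $\beta = \alpha$, because any two values in $R^*$ are realized as distances in $\MU{\cR}$ by saturation. On the other hand, since $E_\alpha$ refines each $E_i$, one has $\alpha \leq r_i$ for all $i$, so every $r_i$ lies in $\{r \in \eq(\cR) : \alpha \leq r\}$ and therefore $\beta \geq \inf\{r \in \eq(\cR) : \alpha \leq r\}$. By the very definition of $\heq(\cR)$, this lower bound strictly exceeds $\alpha$, contradicting $\beta = \alpha$.

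The main point that needs care — modest as it is — lies in the last step: one must verify that $\MU{\cR}$ realizes every element of $R^*$ as a distance, so that the type-definable relations $d(x,y) \leq \gamma$ genuinely distinguish distinct values of $\gamma \in R^*$. This is furnished by the saturated $\cR^*$-metric-space structure of $\MU{\cR}$ given by Proposition \ref{saturated}, combined with the Dedekind completeness of $R^*$ from Theorem \ref{thm:R*}$(b)$. Everything else is a mechanical unpacking of the definitions of $\heq(\cR)$, $E_\alpha$, and the rigid classification of 0-definable equivalence relations on $\cU_\cR$ supplied by Corollary \ref{cor:0deq}.
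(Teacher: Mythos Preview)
Your proof is correct and follows essentially the same approach as the paper: both use the type-definable equivalence relation $d(x,y)\leq\alpha$ for $\alpha\in\heq(\cR)$, invoke Proposition~\ref{prop:ehi} to reduce to a conjunction of 0-definable equivalence relations, apply Corollary~\ref{cor:0deq} to identify each conjunct with some $d(x,y)\leq r_i$ for $r_i\in\eq(\cR)$, and obtain a contradiction from $\alpha<\inf\{r\in\eq(\cR):\alpha\leq r\}$. The only cosmetic difference is that the paper phrases the argument as a contrapositive (assuming elimination of hyperimaginaries and showing $\heq(\cR)=\emptyset$), whereas you proceed directly; your write-up is also a bit more explicit about why $E_\alpha$ is an equivalence relation and why distinct elements of $R^*$ yield distinct type-definable relations.
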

\begin{proof}
Suppose $\TR{\cR}$ eliminates hyperimaginaries and fix $\alpha\in\eq(\cR^*)$. We want to show $\alpha=\inf\{r\in\eq(\cR):\alpha\leq r\}$, and we may clearly assume $\alpha<\sup R^*$. Fix a singleton $a\in\MU{\cR}$. Recall that there is a unique $1$-type in $S_1^{\cU_\cR}(\emptyset)$. Since $d(x,y)\leq\alpha$ is a $0$-type-definable equivalence relation, it follows from Proposition \ref{prop:ehi} that there is a sequence $(E_i(x,y))_{i<\lambda}$ of $0$-definable unary equivalence relations such that for any $b,b'\in\M$, $d(b,b')\leq\alpha$ if and only if, for all $i<\lambda$, $E_i(b,b')$ holds. Since $\alpha<\sup R^*$, we may clearly assume that no $E_i$ is trivial on $\M$. By Corollary \ref{cor:0deq}, there are $r_i\in \eq(R)$, for $i<\lambda$, such that $E_i(x,y)$ is equivalent to $d(x,y)\leq r_i$. Therefore, we must have $\alpha=\inf\{r_i:i<\lambda\}$, which gives the desired result.
\end{proof}

Returning to the results of \cite{CaWa}, consider the distance monoid $\cQ_1$. We have the element $0^+\in\cQ^*_1$ such that $0<0^+<r$ for all $r\in\Q\cap(0,1]$. By upper semicontinuity in $\cQ_1^*$, we have $0^+\in\eq(\cQ^*_1)$. Note also that $\eq(\cQ_1)=\{0,1\}$, and so $0^+\in\heq(\cQ_1)$. Therefore, failure of elimination of hyperimaginaries for $\TR{\cQ_1}$ is a special case of the previous result. Note also that $0^+\in\heq(\cQ)$ and so $\TR{\cQ}$ also fails elimination of hyperimaginaries. 

The converse of the previous result is an interesting problem.

\begin{conjecture}\label{conj:ehi}
Suppose $\cR$ is a nontrivial Urysohn monoid. Then $\TR{\cR}$ has elimination of hyperimaginaries if and only if $\heq(\cR)=\emptyset$.
\end{conjecture}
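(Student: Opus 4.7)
One direction---that $\heq(\cR) \neq \emptyset$ implies failure of elimination of hyperimaginaries---is already Theorem \ref{thm:ehi}. The plan is thus to establish the converse: if $\heq(\cR) = \emptyset$, then $\TR{\cR}$ has elimination of hyperimaginaries. By Proposition \ref{prop:ehi}, this reduces to showing that, given any $0$-type-definable equivalence relation $E(\xbar, \ybar)$ and any real tuple $\abar$ with $\ell(\abar) = \ell(\xbar)$, the restriction of $E$ to pairs $\bbar, \cbar \equiv \abar$ is expressible as a conjunction of $0$-definable finitary equivalence relations on subtuples of $\xbar$.

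First I would handle the single-variable case, where everything falls out from Lemma \ref{lem:WEI} together with the hypothesis. If $E(x,y)$ is $0$-type-definable, then since there is a unique $1$-type $p$ over $\emptyset$, Lemma \ref{lem:WEI}(c,d) gives that either $E$ is trivial or $E(x,y)$ is equivalent to $d(x,y) \leq \alpha$ for some $\alpha \in \eq(\cR^*)$. The assumption $\heq(\cR) = \emptyset$ forces $\alpha = \inf\{r \in \eq(\cR) : \alpha \leq r\}$, and by upper semicontinuity in $\cR^*$, the condition $d(x,y) \leq \alpha$ is equivalent to the infinitary conjunction over $r \in \eq(\cR)$ with $\alpha \leq r$ of $d(x,y) \leq r$. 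Each conjunct is a $0$-definable equivalence relation on $\MU{\cR}$, yielding the decomposition demanded by Proposition \ref{prop:ehi}.

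Next I would try to generalize Lemma \ref{lem:WEI} to tuples. Given $E(\xbar, \ybar)$ and $\abar$, for each pair of coordinates $(i,j)$ define the pairwise threshold
$$
\alpha_{i,j} = \sup\{d(b_i, c_j) : \bbar, \cbar \equiv \abar,~E(\bbar, \cbar)\text{ holds}\}.
$$
The aim is to prove two things: (a) $E(\bbar, \cbar)$ holds for $\bbar, \cbar \equiv \abar$ if and only if $d(b_i, c_j) \leq \alpha_{i,j}$ for all $i,j$; and (b) each $\alpha_{i,j}$ in the nontrivial range lies in $\eq(\cR^*)$. Part (b) should follow by a coordinate-wise analog of Lemma \ref{lem:WEI}(a,d), iterating near-extremal witnesses via free amalgamation. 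Granting (a) and (b), each condition $d(x_i, y_j) \leq \alpha_{i,j}$ is itself a $0$-type-definable equivalence relation on the pair of coordinates, so the single-variable analysis of the previous paragraph decomposes it into $0$-definable binary equivalence relations.

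The main obstacle will be showing that an arbitrary type-definable $E$ on tuples really factors through such pairwise distance data. Two potential obstructions stand out. First, $E$ could impose joint non-pairwise constraints coupling multiple coordinates; this ought to be ruled out by free amalgamation of $\cR^*$-metric spaces, which should allow witnesses at separate pairs in the $E$-class to be amalgamated coordinate-by-coordinate into a single tuple in the $E$-class, with upper semicontinuity guaranteeing the triangle inequality. Second, $E$ could be invariant under nontrivial permutations of coordinates (as in Lemma \ref{prop:ehigen}), which would force us, in the spirit of Proposition \ref{trivEQ}, to enlarge the decomposition to include disjunctions over subgroups of the symmetric group acting on coordinates; incorporating these cleanly, especially for infinite tuples, is delicate, but I expect a compactness argument to reduce to the finite-tuple case. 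Executing the amalgamation argument rigorously and incorporating the permutation symmetries is the substantive content I would need to supply.
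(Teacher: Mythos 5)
The statement you are trying to prove is stated in the paper as a \emph{conjecture}: the paper proves only the direction you correctly attribute to Theorem \ref{thm:ehi} (if $\heq(\cR)\neq\emptyset$ then hyperimaginaries are not eliminated) and explicitly leaves the converse open (``The converse of the previous result is an interesting problem''). So there is no paper proof to match, and your proposal does not supply one either. What you actually establish is the already-known direction plus the easy singleton case of the converse: for a $0$-type-definable $E(x,y)$ on the unique $1$-type, Lemma \ref{lem:WEI} together with $\heq(\cR)=\emptyset$ does give the decomposition $E(x,y)\leftrightarrow\bigwedge\{d(x,y)\leq r: r\in\eq(\cR),\ \alpha(E,p)\leq r\}$, and that part is fine (it is essentially Theorem \ref{thm:ehi} run backwards). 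But Proposition \ref{prop:ehi} requires the decomposition for $0$-type-definable equivalence relations on tuples $\abar$ of arbitrary, possibly infinite, length, and that is exactly where your argument stops.

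Concretely, your claim (a) --- that $E(\bbar,\cbar)$ for $\bbar,\cbar\equiv\abar$ is equivalent to the pairwise conditions $d(b_i,c_j)\leq\alpha_{i,j}$ --- is a substantive structural assertion about arbitrary type-definable equivalence relations, not something that follows from free amalgamation as stated; you yourself flag the permutation-invariant relations of Lemma \ref{prop:ehigen} as counterexamples to the pure pairwise format, and the proposed fix (``enlarge the decomposition \ldots in the spirit of Proposition \ref{trivEQ}'') is not available at the needed level of generality: Proposition \ref{trivEQ} concerns \emph{definable} equivalence relations on types in \emph{finitely many} variables over \emph{finite} real parameter sets, whereas here $E$ is only type-definable, the tuple may be infinite, and the coordinates of $\abar$ act as parameters for one another (so one is really analyzing type-definable relations on types over infinite parameter sets, where Lemma \ref{lem:WEI} as proved gives idempotency information but no finitary definable decomposition). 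No compactness argument is given for reducing the infinite-tuple, type-definable case to the finite definable case, and it is precisely this reduction --- together with ruling out genuinely non-pairwise coupling of coordinates --- that constitutes the open content of the conjecture. As written, the proposal is a plausible plan of attack with the decisive steps (your items (a), (b), the amalgamation argument, and the treatment of symmetries) acknowledged but not carried out, so it does not prove the statement.
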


Regarding consequences of this conjecture, we first make the following observation.

\begin{proposition}\label{prop:conj}
If $\cR$ is a countable distance monoid and $\arch(\cR)<\omega$ then $\heq(\cR)=\emptyset$.
\end{proposition}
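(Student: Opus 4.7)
The plan is to fix $\alpha \in \eq(\cR^*)$ and show $\alpha = \inf\{r \in \eq(\cR) : \alpha \leq r\}$, which will mean $\alpha \notin \heq(\cR)$; since $\alpha$ was arbitrary, this gives $\heq(\cR) = \emptyset$. The two ingredients are (i) the supply of idempotents in the original monoid provided by finite archimedean complexity, and (ii) upper semicontinuity of addition in $\cR^*$, which lets us approximate idempotents of $\cR^*$ from above by idempotents of $\cR$.

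Set $n = \arch(\cR)$. First I would note, as the excerpt already observes just after Definition \ref{def:arch}, that $nr \in \eq(\cR)$ for every $r \in R$. Indeed, taking $r_0 = r_1 = \ldots = r_n = r$ in the definition of archimedean complexity gives $(n+1)r = nr$, and iterating $r \p nr = nr$ shows $knr = nr$ for all $k \geq 1$, so in particular $2(nr) = nr$. By Proposition \ref{prop:same}, $\arch(\cR^*) = n$ as well, so the same argument applied to $\alpha \in \eq(\cR^*)$ gives $k\alpha = \alpha$ for all $k \geq 1$; in particular $n\alpha = \alpha$.

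Next I would apply upper semicontinuity (part $(iii)$ of the proposition following Theorem \ref{thm:QE}) iteratively to get
$$\alpha \;=\; n\alpha \;=\; \inf\{nu : u \in R,\ \alpha \leq u\}.$$
For any such $u \in R$ with $\alpha \leq u$, the element $nu$ lies in $\eq(\cR)$ by the first step, and $\alpha = n\alpha \leq nu$ by translation-invariance. Hence
$$\{nu : u \in R,\ \alpha \leq u\} \;\subseteq\; \{r \in \eq(\cR) : \alpha \leq r\},$$
so the infimum of the right-hand set is at most the infimum of the left-hand set, namely $\alpha$. The reverse inequality $\alpha \leq \inf\{r \in \eq(\cR) : \alpha \leq r\}$ is trivial, so equality holds and $\alpha \notin \heq(\cR)$.

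There is no real obstacle here beyond bookkeeping: the archimedean hypothesis is exactly strong enough to force every element of $R$ to produce an idempotent of $\cR$ after $n$-fold addition, and upper semicontinuity in $\cR^*$ provides the one-step approximation $n\alpha = \inf\{nu : u \in R,\ \alpha \leq u\}$ needed to transfer this to arbitrary idempotents of $\cR^*$. The only thing one must be careful about is justifying that step from upper semicontinuity, which reduces to an $n$-fold iteration of $\inf(X \p Y) = \inf X \p \inf Y$ applied with $X = Y = \{u \in R : \alpha \leq u\}$.
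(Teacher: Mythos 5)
Your proof is correct and rests on the same two ingredients as the paper's: that $\arch(\cR)\leq n$ forces $nr\in\eq(\cR)$ for every $r\in R$, and that upper semicontinuity in $\cR^*$ lets one approximate $n\alpha$ from above by elements $nt$ with $t\in R$, $\alpha\leq t$. The only difference is organizational: you argue directly that $\alpha=\inf\{r\in\eq(\cR):\alpha\leq r\}$, while the paper argues by contraposition, using the same approximation to produce, for each $n$, an element $nt\in R$ strictly between $\alpha$ and that infimum with $nt<2nt$, whence $\arch(\cR)>n$.
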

\begin{proof}
Suppose $\alpha\in\heq(\cR)$. Then there is $\beta\in R^*$ such that $\alpha<\beta$ and, for all $r\in R$, if $\alpha<r<\beta$ then $r<r\p r$. Fix $n>0$. Then $n\alpha=\alpha<\beta$ so, by upper semicontinuity in $\cR^*$, there is some $t\in R$ such that $\alpha<t$ and $nt<\beta$. Then $nt<2nt$, which implies $\arch(\cR)>n$. 
\end{proof}

The purpose of Casanovas and Wagner's work in \cite{CaWa} was to demonstrate the existence of a theory without the strict order property that does not eliminate hyperimaginaries. Our previous work sharpens this upper bound of complexity to \textit{without the finitary strong order property}. On the other hand, if Conjecture \ref{conj:ehi} is true then, combined with Proposition \ref{prop:conj}, we would conclude that generalized Urysohn spaces provide no further assistance in decreasing the complexity of this upper bound. In other words, a consequence of our conjecture is that finite strong order rank for $\TR{\cR}$ implies elimination of hyperimaginaries. An outrageous, but nonetheless open, conjecture could be obtained from this statement by replacing $\TR{\cR}$ with an arbitrary complete theory $T$. Concerning the converse of this statement, note that, if Conjecture \ref{conj:ehi} holds, then $\TR{\cN}$ would eliminate hyperimaginaries, while $\SO(\cU_\cN)=\omega$. As a side note, $\TR{\cN}$ at least eliminates finitary hyperimaginaries, since $\cN^*$ countable implies $S_n(\Th(\cU_\cN))$ countable for all $n<\omega$ (see \cite[Theorem 18.14]{Cabook}).

\begin{remark}\label{rem:thorn}
In \cite{CoStr}, we use the characterization of weak elimination of imaginaries for $\TR{\cR}$ to provide counterexamples to the following question of Adler \cite{Adgeo}. Given a first order theory $T$, one defines a \emph{strict independence relation} for $T^{\eq}$ to be a ternary relation on $\M^{\eq}$ satisfying a certain list of axioms (see \cite{Adgeo}). If $T^{\eq}$ has such a relation, then $T$ is called \textit{rosy} and, moreover, there is a weakest strict independence relation for $T^{\eq}$ called \textit{thorn-forking}. In \cite[Question 1.7]{Adgeo}, Adler asks if the axioms of strict independence relations \textit{characterize} thorn-forking in $T^{\eq}$ when $T$ is rosy. In other words, is there a theory $T$ such that $T^{\eq}$ has more than one strict independence relation? 

Let $\cR$ be a Urysohn monoid and consider the following relation $\ind^{\infty}$ on $\U_\cR$:
$$
A\textstyle\ind^{\infty}_C B\miff \text{$A\cap B\seq C$ and if $d(a,C)=\sup R^*$ then $d(a,BC)=\sup R^*$.}
$$
In \cite{CoStr}, we show that if $\cR$ has no maximal element, then $\ind^{\infty}$ is a strict independence relation for $\TR{\cR}$, which is distinct from \textit{algebraic independence}: $A\ind^a_C B$ if and only if $A\cap B\seq C$. In the case that $\TR{\cR}$ has weak elimination of imaginaries, we further show that $\ind^{\infty}$ and $\ind^a$ ``lift" to distinct strict independence relations for $\TR{\cR}^{\eq}$. For example, this applies whenever $\cR$ is the nonnegative part of a countable ordered abelian group (e.g. $\cN$ or $\cQ$). 
\end{remark}

\appendix

\section{Forking and Dividing in Generalized Urysohn Spaces}\label{app:FD}

\renewcommand*{\thesection}{\Alph{section}}

In this appendix, we briefly discuss the proof of Theorem \ref{thm:forks}, which is essentially a direct translation of the analogous result for the complete Urysohn sphere in continuous logic \cite{CoTe} (joint work with Caroline Terry). A full proof of Theorem \ref{thm:forks} can be found in the author's thesis \cite[Section 3.4]{Cothesis}. First, we summarize a ``guide" for translating from \cite{CoTe}.  

\begin{enumerate}
\item In \cite{CoTe}, we consider a monster model $\U$ of the theory of the complete Urysohn sphere as a metric structure in continuous logic. In this case, $\U$ is a complete, $\kappa^+$-universal and $\kappa$-homogeneous metric space of diameter $1$, where $\kappa$ is the density character of $\U$. Given a tuple $\abar=(a_1,\ldots,a_n)\in\U$ and a subset $C\subset\U$, by quantifier elimination, the type $\tp_{\xbar}(\abar/C)$ is completely determined by 
$$
\{d(x_i,x_j)=d(a_i,a_j):1\leq i,j\leq n\}\cup\{d(x_i,c)=d(a_i,c):1\leq i\leq n,~c\in C\},
$$
where, given variables $x,y$ and $r\in [0,1]$, $d(x,y)=r$ denotes the condition $|d(x,y)-r|=0$. 
\item In this section, we consider a monster model $\U_\cR$ of the theory of the $\cR$-Urysohn space as a relational structure in classical logic. In this case, $\U_\cR$ is a $\kappa^+$-universal and $\kappa$-homogeneous $\cR^*$-metric space, where $\kappa$ is the cardinality of $\U_\cR$. Given a tuple $\abar=(a_1,\ldots,a_n)\in\U_\cR$ and a subset $C\subset\U_\cR$, by quantifier elimination, the type $\tp_{\xbar}(\abar/C)$ is completely determined by 
$$
\bigcup_{1\leq i,j\leq n}d(x_i,x_j)=d(a_i,a_j)\cup\bigcup_{1\leq i\leq n,~c\in C}d(x_i,c)=d(a_i,c),
$$
where, given variables $x,y$ and $\alpha\in R^*$, $d(x,y)=\alpha$ denotes the type
$$
\{d(x,y)\leq r:r\in R,~\alpha\leq r\}\cup\{d(x,y)>r:r\in R,~r<\alpha\}.
$$
\end{enumerate}

For the rest of the section, $\cR$ is a fixed Urysohn monoid. Toward the proof of Theorem \ref{thm:forks}, the first step is to characterize nondividing.  

\begin{theorem}\label{thm:divide}
Given $A,B,C\subset\MU{\cR}$, $A\ind^d_C B$ if and only if for all $b_1,b_2\in B$,
$$
d_{\max}(b_1,b_2/AC)=d_{\max}(b_1,b_2/C)\mand d_{\min}(b_1,b_2/AC)=d_{\min}(b_1,b_2/C).
$$
\end{theorem}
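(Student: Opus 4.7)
The plan is to translate the three-step strategy of \cite{CoTe} into the discrete setting, exploiting quantifier elimination together with the fact, guaranteed by Theorem \ref{thm:R*}$(b)$ and the free amalgamation property of $\cK_\cR$, that consistency of quantifier-free types in $\TR{\cR}$ is governed entirely by the $\cR^*$-triangle inequality on pairs.

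The first step will be a finite character reduction: I will show that $A\ind^d_C B$ if and only if $a\ind^d_C b_1b_2$ for every singleton $a\in A$ and every pair $b_1,b_2\in B$. The forward direction is automatic from monotonicity of $\ind^d$. For the reverse, given a $C$-indiscernible sequence $(B^l)_{l<\omega}$ with $B^0=B$, I need to realize $\bigcup_l\tp(A/B^lC)$; by quantifier elimination this reduces to producing an $\cR^*$-metric on $A\cup C\cup\bigcup_l B^l$ that extends all the given $\cR^*$-metrics on $AC$ and $B^lC$ and satisfies, for each $a\in A$, some prescribed distances $d(a,b^l_i)$. I will free-amalgamate one $B^l$-copy of $AC$ at a time, using the pairwise hypothesis $a\ind^d_C b_1b_2$ to choose the distances $d(a,b^l_i)$ consistently with the triangle inequality governing the triple $(a,b^l_i,b^{l'}_j)$ for $l\neq l'$.

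The second step is to isolate, for each pair $b_1,b_2\in\MU{\cR}$, the set
\[
\Gamma(b_1,b_2/C)=\bigl\{\gamma\in R^*:\exists\text{ a $C$-indiscernible sequence }(b^l_1,b^l_2)_{l<\omega}\text{ with }(b^0_1,b^0_2)=(b_1,b_2)\text{ and }d(b^0_1,b^1_2)=\gamma\bigr\}.
\]
A short argument using Ramsey-style extraction of indiscernibles, together with the pairwise triangle inequality characterization of consistency, shows $a\ind^d_C b_1b_2$ holds if and only if for every $i,j\in\{1,2\}$ and every $\gamma\in\Gamma(b_i,b_j/C)$, both $\gamma\leq d(b_i,a)\p d(a,b_j)$ and $|d(b_i,a)\m d(a,b_j)|\leq\gamma$. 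Rewriting these as one-sided bounds, this is equivalent to $\sup\Gamma(b_i,b_j/C)\leq d(b_i,a)\p d(a,b_j)$ and $|d(b_i,a)\m d(a,b_j)|\leq\inf\Gamma(b_i,b_j/C)$.

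The third and main step, and the principal technical obstacle, is the identification
\[
\Gamma(b_1,b_2/C)=\{\gamma\in R^*:d_{\min}(b_1,b_2/C)\leq\gamma\leq d_{\max}(b_1,b_2/C)\}.
\]
The upper bound is easy: any distance $d(b^0_1,b^1_2)$ obtained from an indiscernible sequence satisfies $d(b^0_1,b^1_2)\leq d(b^0_1,c)\p d(c,b^1_2)=d(b^0_1,c)\p d(c,b^0_2)$ for each $c\in C$, hence is at most $d_{\max}(b_1,b_2/C)$. The lower bound is more delicate: on the one hand, $d(b^0_1,b^1_2)\geq|d(b^0_1,c)\m d(b^1_2,c)|=|d(b_1,c)\m d(b_2,c)|$ for every $c\in C$, which gives the supremum part of $d_{\min}$; on the other hand, the factor $\tfrac13 d(b_1,b_2)$ arises because along an indiscernible sequence the triangle $(b^0_1,b^0_2,b^1_2)$ forces $d(b_1,b_2)=d(b^0_1,b^0_2)\leq d(b^0_1,b^1_2)\p d(b^0_2,b^1_2)$, and iterating using the indiscernible copies yields a chain of three legs of length $d(b^0_1,b^1_2)$ whose sum must dominate $d(b_1,b_2)$. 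Conversely, for any $\gamma$ in the stated interval, I will build an $\cR^*$-metric on $\{b^l_1,b^l_2:l<\omega\}\cup C$ realizing a $C$-indiscernible sequence with the required cross distances; the verification of the triangle inequality is by case analysis on triples, where upper semicontinuity in $\cR^*$ (Theorem \ref{thm:R*}$(b)$) is essential so that infima and sums interact correctly.

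Combining Steps 1--3 yields: $A\ind^d_C B$ if and only if for all $a\in A$ and $b_1,b_2\in B$,
\[
d_{\max}(b_1,b_2/C)\leq d(b_1,a)\p d(a,b_2)\ \text{ and }\ |d(b_1,a)\m d(a,b_2)|\leq d_{\min}(b_1,b_2/C),
\]
together with the diagonal conditions ($i=j$) encoding $d_{\max}(b_i,b_i/C)=2d(b_i,C)\leq 2d(b_i,a)$. A final rewriting, using that $d_{\max}(b_1,b_2/AC)$ is the infimum of $d(b_1,x)\p d(x,b_2)$ over $x\in AC$ and similarly for $d_{\min}$, converts the conjunction of these inequalities (over all $a\in A$) into the single statement that $d_{\max}$ and $d_{\min}$ over $AC$ coincide with those over $C$, completing the proof. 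The heart of the argument, and where I would spend the most care, is the explicit calculation of $\Gamma(b_1,b_2/C)$ in Step 3.
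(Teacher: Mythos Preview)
Your proposal is correct and follows essentially the same route as the paper: the three-step strategy (finite-character reduction to $a\ind^d_C b_1b_2$, characterization of this via the set $\Gamma(b_1,b_2/C)$ of attainable cross distances in $C$-indiscernible sequences, and the explicit computation $\Gamma(b_1,b_2/C)=[d_{\min}(b_1,b_2/C),d_{\max}(b_1,b_2/C)]$) is exactly the translation of \cite{CoTe} that the paper invokes, with the third step packaged there as Proposition~\ref{prop:dmaxmin}. Your identification of upper semicontinuity in $\cR^*$ as the ingredient needed to verify the triangle inequality in the construction of the witnessing indiscernible sequence, and your final repackaging of the pointwise inequalities into the $d_{\max}/d_{\min}$ equalities over $AC$ versus $C$, also match the paper's treatment.
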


The proof of this theorem can be obtained by directly translating Section 3.2 of \cite{CoTe}, using the translation guide given above. The argument only requires the inequality given by Lemma \ref{lem:dmax} and the fact that $\U_\cR$ is universal and homogeneous as an $\cR^*$-metric space. The key technical step in this argument is the following proposition, which explains the significance of $d_{\max}$ and $d_{\min}$.

\begin{proposition}\label{prop:dmaxmin}
Fix $C\subset\U_\cR$, $b_1,b_2\in\U_\cR$, and $\alpha\in R^*$. The following are equivalent.
\begin{enumerate}[$(i)$]
\item $d_{\min}(b_1,b_2/C)\leq\alpha\leq d_{\max}(b_1,b_2/C)$.
\item There is a $C$-indiscernible sequence $(b^l_1,b^l_2)_{l<\omega}$ such that $(b^0_1,b^0_2)=(b_1,b_2)$ and $d(b^0_1,b^1_2)=\alpha$.
\end{enumerate}
\end{proposition}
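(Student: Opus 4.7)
The plan is to prove the two implications separately.

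For $(ii)\Rightarrow(i)$, I would derive each of the bounds comprising $d_{\min}(b_1,b_2/C)\leq\alpha\leq d_{\max}(b_1,b_2/C)$ directly from the given indiscernible sequence, using only the $\cR^*$-triangle inequality and the standard consequence of indiscernibility that $d(b^k_i,c)=d(b_i,c)$ for every $c\in C$ and $k$. Fixing $c\in C$,
\[
\alpha=d(b^0_1,b^1_2)\leq d(b^0_1,c)\p d(c,b^1_2)=d(b_1,c)\p d(b_2,c),
\]
which upon taking infimum yields $\alpha\leq d_{\max}(b_1,b_2/C)$; similarly $|d(b_1,c)\m d(b_2,c)|=|d(b^0_1,c)\m d(b^1_2,c)|\leq\alpha$ for each $c$. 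For the remaining bound $\frac{1}{3}d(b_1,b_2)\leq\alpha$, the key observation is that $2$-indiscernibility of the sequence of pairs forces $d(b^0_1,b^2_2)=d(b^0_1,b^1_2)=\alpha$, so $d(b^0_2,b^1_2)=d(b^1_2,b^2_2)\leq d(b^1_2,b^0_1)\p d(b^0_1,b^2_2)=2\alpha$, and therefore $d(b_1,b_2)\leq d(b^0_1,b^1_2)\p d(b^1_2,b^0_2)\leq 3\alpha$, which unfolds to $\frac{1}{3}d(b_1,b_2)\leq\alpha$ by definition of $\frac{1}{3}$.

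For $(i)\Rightarrow(ii)$, my approach is to write down the intended EM-type of the desired sequence and realize it in $\U_\cR$. I define a symmetric $\cR^*$-valued distance on $X=\{b^l_i:l<\omega,\,i\in\{1,2\}\}\cup C$ which extends the original metric on $C\cup\{b_1,b_2\}$, with $d(b^l_1,b^l_2)=d(b_1,b_2)$ and $d(b^l_i,c)=d(b_i,c)$, and cross distances $d(b^{l_1}_i,b^{l_2}_j)=\delta_{ij}$ for $l_1<l_2$ depending only on $(i,j)$: take $\delta_{12}=\delta_{21}=\alpha$, and choose $\delta_{ii}$ anywhere in the interval bounded below by $|d(b_1,b_2)\m\alpha|$ and above by $\min\{2 d(b_i,C),\,2\alpha,\,d(b_1,b_2)\p\alpha\}$ computed in $\cR^*$. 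Once the triangle inequality is verified on every finite subset of $X$, universality and homogeneity of $\U_\cR$ as an $\cR^*$-metric space (Proposition \ref{saturated}) embed $X$ into $\U_\cR$ fixing $C\cup\{b_1,b_2\}$; since pairwise distances depend only on the relative ordering of indices, the resulting sequence of pairs is $C$-indiscernible by quantifier elimination, and $d(b^0_1,b^1_2)=\alpha$ by construction.

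The main obstacle is verifying that the $\delta_{ii}$-interval is nonempty and that every triangle in $X$ is valid; these are two sides of the same coin. The three upper bounds on $\delta_{ii}$ arise from the triangles $b^{l_1}_i,c,b^{l_2}_i$ (giving $\delta_{ii}\leq 2 d(b_i,C)$), $b^{l_1}_i,b^{l_2}_j,b^{l_3}_i$ with $l_1<l_2<l_3$ and $j\neq i$ (giving $\delta_{ii}\leq 2\alpha$), and $b^{l_1}_i,b^{l_2}_j,b^{l_2}_i$ (giving $\delta_{ii}\leq d(b_1,b_2)\p\alpha$); the lower bound comes from the last of these in the reverse direction. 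Non-emptiness then reduces to three inequalities, each matching a hypothesis of $(i)$: the bound $|d(b_1,b_2)\m\alpha|\leq 2\alpha$ is equivalent to $d(b_1,b_2)\leq 3\alpha$, the bound $|d(b_1,b_2)\m\alpha|\leq d(b_1,b_2)\p\alpha$ is automatic, and $|d(b_1,b_2)\m\alpha|\leq 2 d(b_i,C)$ follows by combining the triangle inequality $d(b_1,b_2)\leq 2 d(b_i,c)\p|d(b_1,c)\m d(b_2,c)|$ (and $d_{\max}(b_1,b_2/C)\leq 2 d(b_i,C)\p d(b_1,b_2)$, derived analogously) with $|d(b_1,c)\m d(b_2,c)|\leq\alpha$ and $\alpha\leq d_{\max}(b_1,b_2/C)$, invoking upper semicontinuity of $\p$ in $\cR^*$ to commute the infimum over $c$ with $\p$.
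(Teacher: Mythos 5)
Your proof is correct and is essentially the argument the paper intends, namely the direct translation of \cite[Lemma 3.17]{CoTe} to which it defers: one direction from indiscernibility plus the $\cR^*$-triangle inequality, and the other by writing down the EM-type as an explicit $\cR^*$-metric space (cross distances $\alpha$, same-coordinate distances squeezed between $|d(b_1,b_2)\m\alpha|$ and $\min\{2d(b_i,C),2\alpha,d(b_1,b_2)\p\alpha\}$), embedding it via universality and homogeneity, and reading off $C$-indiscernibility from quantifier elimination. The only step I would spell out is that the hypothesis $\frac{1}{3}d(b_1,b_2)\leq\alpha$ yields $d(b_1,b_2)\leq 3\alpha$, which uses upper semicontinuity in $\cR^*$ (so that the infimum defining $\frac{1}{3}d(b_1,b_2)$ lies in the set it bounds), but this is routine and consistent with your appeal to upper semicontinuity elsewhere.
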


Finally, to prove Theorem \ref{thm:forks}, we show that forking and dividing are the same for complete types in $\TR{\cR}$. For this, it suffices to prove the following theorem, which shows that $\ind^d$ satisfies extension in $\TR{\cR}$.

\begin{theorem}\label{forkReduct}
Fix subsets $B,C\subset\MU{\cR}$ and a singleton $b_*\in\MU{\cR}$. For any $A\subset\MU{\cR}$, if $A\ind^d_C B$ then there is $A'\equiv_{BC}A$ such that $A'\ind^d_C Bb_*$.
\end{theorem}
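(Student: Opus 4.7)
The plan is to prove extension for $\ind^d$ directly via the characterization in Theorem \ref{thm:divide}. By quantifier elimination together with universality and homogeneity of the monster model as an $\cR^*$-metric space (the analog for $\MU{\cR}$ of Proposition \ref{saturated}), producing $A' \equiv_{BC} A$ amounts to specifying a distance $\delta(a) := d(a', b_*)$ for each $a \in A$ so that the resulting data extends to a valid $\cR^*$-metric on $A \cup BC \cup \{b_*\}$. The conclusion $A' \ind^d_C Bb_*$ then reduces, via Theorem \ref{thm:divide}, to preservation of $d_{\max}$ and $d_{\min}$ for all pairs in $Bb_*$: pairs contained in $B$ are automatic from $A' \equiv_{BC} A$ and $A \ind^d_C B$; the diagonal pair $(b_*, b_*)$ yields only the constraint $\delta(a) \geq d(b_*, C)$; and the mixed pairs $(b, b_*)$ with $b \in B$ produce two nontrivial constraints on $\delta(a)$, namely the complement bound $d(b, a) \p \delta(a) \geq d_{\max}(b, b_*/C)$ and the matching bound $|d(b, a) \m \delta(a)| \leq d_{\min}(b, b_*/C)$.

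The essential structural consequence of $A \ind^d_C B$ needed for the argument, obtained by applying Theorem \ref{thm:divide} to the diagonal pair $(b, b)$ for each $b \in B$, is the no-collision property $d(a, b) \geq d(b, C)$ for all $a \in A$ and $b \in B$. Using this together with upper semicontinuity in $\cR^*$ (Theorem \ref{thm:R*}$(b)(iii)$), Lemma \ref{lem:dmax}, and the elementary bound $d_{\min}(a_1, a_2/C) \leq d(a_1, a_2)$ (immediate from the triangle inequality in $\cR^*$), one shows that the admissible interval of values for each $\delta(a)$ is nonempty, and that compatible choices across $a \in A$ can be made so that the triangle inequalities within $A'$ relative to $b_*$ are satisfied. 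Once the function $\delta$ is defined with these properties, the realization of $A'$ in $\MU{\cR}$ follows by universality.

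The main obstacle is this compatibility verification: one must show that the complement lower bounds on $\delta(a)$ (together with the constraint $\delta(a) \geq d(b_*, C)$) do not exceed the matching upper bounds and the triangle-inequality upper bounds, which is an algebraic inequality in the ordered monoid $\cR^*$ that rests crucially on the no-collision property $d(a, b) \geq d(b, C)$ and on upper semicontinuity. This step is the direct analog of the key compatibility argument in \cite[Section 3.2]{CoTe} for the complete Urysohn sphere in continuous logic, and a full transcription into the discrete-logic setting of $\TR{\cR}$ appears in \cite[Section 3.4]{Cothesis}.
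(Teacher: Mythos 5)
Your overall scaffolding matches the paper's: reduce the problem, via Theorem \ref{thm:divide} and quantifier elimination plus universality/homogeneity, to choosing a value $\delta(a)=d(a',b_*)$ for each $a\in A$ satisfying the constraints coming from the pairs in $Bb_*$ and the triangle inequalities. The gap is in what you claim makes this constraint system solvable. You assert that ``the essential structural consequence of $A\ind^d_C B$ needed for the argument'' is the no-collision property $d(a,b)\geq d(b,C)$ (which, incidentally, the diagonal pair only gives in the weaker form $2d(a,b)\geq 2d(b,C)$). This is not enough, and no argument along those lines can close the compatibility step. Concretely, in $\cU_\cQ$ take $C=\{c\}$, $B=\{b_1,b_2\}$, $b_*$ with $d(b_i,c)=d(b_*,c)=1$, $d(b_1,b_2)=d(b_i,b_*)=2$, and $a$ with $d(a,c)=10$, $d(a,b_1)=9$, $d(a,b_2)=11$: the no-collision property holds, yet the two matching constraints $|d(b_1,a)\m\delta|\leq d_{\min}(b_1,b_*/C)=\frac{2}{3}$ and $|d(b_2,a)\m\delta|\leq\frac{2}{3}$ are jointly unsatisfiable. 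What rescues the situation under the actual hypothesis is precisely the $d_{\min}$- and $d_{\max}$-parts of $A\ind^d_C B$ for pairs of \emph{distinct} elements of $B$ (here $|d(a,b_1)\m d(a,b_2)|\leq d_{\min}(b_1,b_2/C)$ fails), combined with the subadditivity of $d_{\min}$, i.e.\ $d_{\min}(b_1,b_3/C)\leq d_{\min}(b_1,b_2/C)\p d_{\min}(b_2,b_3/C)$, which your outline never invokes.

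A second, related omission: nonemptiness of each admissible interval for $\delta(a)$ separately does not give the triangle inequalities among distinct $a_1,a_2\in A$ through $b_*$; an arbitrary selection from the intervals can violate $|\delta(a_1)\m\delta(a_2)|\leq d(a_1,a_2)\leq\delta(a_1)\p\delta(a_2)$. The paper resolves both issues at once by making a canonical choice, $U(a)=\inf_{b\in BC}\bigl(d(a,b)\p d_{\min}(b_*,b/C)\bigr)$, and proving (Proposition \ref{prop:tech}) that $d_{\min}(a,b_*/BC)\leq U(a)\leq d_{\max}(a,b_*/BC)$, that realizing $d(a',b_*)=U(a)$ yields $a'\ind^d_C Bb_*$, and that $|U(a_1)\m U(a_2)|\leq d(a_1,a_2)\leq U(a_1)\p U(a_2)$ whenever $a_1a_2\ind^d_C B$; these estimates use Lemma \ref{lem:dmax}, its dual for $d_{\min}$, and upper semicontinuity. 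Since your proposal defers exactly this ``main obstacle'' to \cite{CoTe} while misstating which consequences of the hypothesis it rests on and offering no mechanism for a coherent choice of the $\delta(a)$, the heart of the proof is missing.
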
 

Once again, this argument closely follows \cite{CoTe}. However, there are a few places where some caution is warranted, and so we give a very terse outline the proof. Fix $B,C\subset\MU{\cR}$ and $b_*\in\MU{\cR}$. 

\begin{definition}
For $a\in\MU{\cR}$, define $U(a):=\inf_{b\in BC}(d(a,b)\p d_{\min}(b_*,b/C))$.
\end{definition}

The motivation for this definition is the observation, which follows easily from Theorem \ref{thm:divide}, that, given $a\in\MU{\cR}$ with $a\ind^d_C B$, if $a'\equiv_{BC} a$ and $a'\ind^d_C Bb_*$, then $d(a',b_*)\leq U(a)$. Toward the proof of Theorem \ref{forkReduct}, the key technical tool is the following result.

\begin{proposition}\label{prop:tech}$~$
\begin{enumerate}[$(a)$]
\item Suppose $a\in\U_\cR$ and $a\ind^d_C B$.
\begin{enumerate}[$(i)$]
\item $d_{\min}(a,b_*/BC)\leq U(a)\leq d_{\max}(a,b_*/BC)$.
\item If $a'\in\MU{\cR}$ is such that $a'\equiv_{BC} a$ and $d(a',b_*)=U(a)$, then $a'\ind^d_C Bb_*$.
\end{enumerate}
\item If $a_1,a_2\in\MU{\cR}$ are such that $a_1a_2\ind^d_C B$ then 
$$
|U(a_1)\m U(a_2)|\leq d(a_1,a_2)\leq U(a_1)\p U(a_2).
$$
\end{enumerate}
\end{proposition}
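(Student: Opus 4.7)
The plan is to establish (a)(i), (a)(ii), and (b) in turn, relying on Theorem~\ref{thm:divide}, the $|\cdot\m\cdot|$-triangle in $\cR^*$, upper semicontinuity, and the identities $3\cdot\frac{1}{3}\alpha\geq\alpha$ and $\frac{1}{3}(\alpha\p\beta)\leq\frac{1}{3}\alpha\p\frac{1}{3}\beta$. A key auxiliary inequality used throughout is that for any $b_1,b_2\in\MU{\cR}$,
\[
d_{\min}(b_1,b_2/C)\leq d_{\min}(b_1,b_*/C)\p d_{\min}(b_*,b_2/C),
\]
obtained by bounding each summand in the $\max$ defining $d_{\min}$: the $\sup$ via the $|\cdot\m\cdot|$-triangle applied to the triple $d(b_1,c),d(c,b_*),d(c,b_2)$, and the $\frac{1}{3}d(b_1,b_2)$ term via subadditivity of $\frac{1}{3}$.

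For part (a)(i), the upper bound $U(a)\leq d_{\max}(a,b_*/BC)$ is immediate since $d_{\min}(b_*,b/C)\leq d(b_*,b)$ for each $b\in BC$. For $d_{\min}(a,b_*/BC)\leq U(a)$ I handle the two summands of $d_{\min}$ separately. The bound $\frac{1}{3}d(a,b_*)\leq U(a)$ comes from the chain $d(a,b_*)\leq 3(d(a,b)\p\frac{1}{3}d(b,b_*))$ (valid via $3\cdot\frac{1}{3}\gamma\geq\gamma$ and a triangle) and the definition of $\frac{1}{3}$, after taking the infimum over $b$. The bound $\sup_{x\in BC}|d(a,x)\m d(x,b_*)|\leq U(a)$ reduces, by the infimum definition of $U(a)$, to showing $|d(a,x)\m d(x,b_*)|\leq d(a,y)\p d_{\min}(b_*,y/C)$ for all $x,y\in BC$. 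A case split on whether $x$ and $y$ lie in $B$ or $C$ reduces all but the subcase $x,y\in B$ to direct triangle estimates; in that remaining subcase, $a\ind^d_C B$ supplies $d(a,x)\leq d(a,y)\p d_{\min}(x,y/C)$ and $d(x,b_*)\leq d_{\max}(x,y/C)\p d_{\min}(b_*,y/C)\leq d(a,x)\p d(a,y)\p d_{\min}(b_*,y/C)$, and the auxiliary inequality closes the argument.

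For part (a)(ii) I verify the $d_{\max}$- and $d_{\min}$-equalities of Theorem~\ref{thm:divide} for all $b_1,b_2\in B\cup\{b_*\}$. The case $b_1,b_2\in B$ follows from $a'\equiv_{BC}a$ and $a\ind^d_C B$. For $b_1=b_2=b_*$, the $d_{\min}$-equality is trivial and the $d_{\max}$-equality reduces to $2U(a)\geq 2d(b_*,C)$, proved by chaining $2d(b_*,C)\leq 2d(b,C)\p 2d_{\min}(b_*,b/C)\leq 2d(a,b)\p 2d_{\min}(b_*,b/C)$ for each $b\in BC$ (using $2d(a,b)\geq 2d(b,C)$ from $a\ind^d_C B$) and taking infima. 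For $b_1=b\in B$, $b_2=b_*$, the $d_{\max}$-equality amounts to $d(a,b)\p U(a)\geq d_{\max}(b,b_*/C)$, which I verify term by term in the infimum defining $U(a)$ via the small lemma $d_{\max}(b,b'/C)\p d_{\min}(b_*,b'/C)\geq d_{\max}(b,b_*/C)$. The $d_{\min}$-equality amounts to $|d(a,b)\m U(a)|\leq d_{\min}(b,b_*/C)$; one direction, $U(a)\leq d(a,b)\p d_{\min}(b,b_*/C)$, is the $b'=b$ term in the infimum for $U(a)$, and the reverse is obtained by chaining, for each $b'\in BC$,
\[
d(a,b)\leq d(a,b')\p d_{\min}(b,b'/C)\leq d(a,b')\p d_{\min}(b,b_*/C)\p d_{\min}(b_*,b'/C),
\]
where the first step uses $a\ind^d_C B$ if $b'\in B$ and a direct triangle if $b'\in C$, and the second uses the auxiliary inequality; taking the infimum over $b'$ then gives the claim.

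For part (b), $|U(a_1)\m U(a_2)|\leq d(a_1,a_2)$ is routine: for each $b\in BC$, $|d(a_1,b)\m d(a_2,b)|\leq d(a_1,a_2)$ by triangle, and the general principle $|\inf f\m\inf g|\leq\sup_b|f(b)\m g(b)|$ (valid in $\cR^*$ by upper semicontinuity) yields the bound. For $d(a_1,a_2)\leq U(a_1)\p U(a_2)$, I fix $b_1,b_2\in BC$ and bound
\[
d(a_1,a_2)\leq d(a_1,b_1)\p d(b_1,a_2)\leq d(a_1,b_1)\p d(a_2,b_2)\p|d(b_1,a_2)\m d(a_2,b_2)|,
\]
so it suffices to show $|d(b_1,a_2)\m d(a_2,b_2)|\leq d_{\min}(b_*,b_1/C)\p d_{\min}(b_*,b_2/C)$. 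When both $b_i\in B$, the hypothesis $a_1a_2\ind^d_C B$ (in particular $a_2\ind^d_C B$) yields $|d(b_1,a_2)\m d(a_2,b_2)|\leq d_{\min}(b_1,b_2/C)$ as the $z=a_2$ term in Theorem~\ref{thm:divide}, and the auxiliary inequality finishes the bound. The remaining cases (where some $b_i\in C$) reduce to $|d(b_1,a_2)\m d(a_2,b_2)|\leq d(b_1,b_2)$ together with $d(b_1,b_2)\leq d_{\min}(b_*,b_1/C)\p d_{\min}(b_*,b_2/C)$, which follows in these cases from a direct triangle estimate combined with the identity $d(b_*,c)=d_{\min}(b_*,c/C)$ for $c\in C$. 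Taking infima over $b_1,b_2$ completes the argument. I expect the main obstacle to be the clean proof of the auxiliary inequality and the careful bookkeeping of the $\frac{1}{3}$-terms; once these are in hand, the case analyses become essentially mechanical.
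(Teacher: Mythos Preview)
Your approach is essentially the same as the paper's: the paper does not give a detailed proof but points to \cite[Lemmas 3.22, 3.23]{CoTe}, identifying as key tools Theorem~\ref{thm:divide}, Lemma~\ref{lem:dmax}, and the ``dual'' inequality $d_{\min}(b_1,b_3/C)\leq d_{\min}(b_1,b_2/C)\p d_{\min}(b_2,b_3/C)$, which is exactly your auxiliary inequality. Your case analyses and the way you invoke $a\ind^d_C B$ via Theorem~\ref{thm:divide} to control $d_{\max}$ and $d_{\min}$ of pairs in $B$ match the intended argument, and the steps you sketch (including the $\tfrac{1}{3}$ bookkeeping and the infimum principle $|\inf f\m\inf g|\leq\sup|f\m g|$) are correct.
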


The proof of this proposition is quite technical, and can be translated directly from \cite[Lemma 3.22]{CoTe} and \cite[Lemma 3.23]{CoTe}. In particular, the claims involve checking a large number of inequalities, which heavily rely on the characterization of $\ind^d$ given by Theorem \ref{thm:divide}, along with Lemma \ref{lem:dmax} and a ``dual" result to Lemma \ref{lem:dmax}, which says that, for all $C\subset\U_\cR$ and $b_1,b_2,b_3\in\U_\cR$, $d_{\min}(b_1,b_3/C) \leq d_{\min}(b_1,b_2/C)\p d_{\min}(b_2,b_3/C)$. The proof of this inequality (which, in \cite{CoTe}, is Lemma 3.16$(b)$) is similar to the proof of Lemma \ref{lem:dmax}, but requires the following applications of upper semicontinuity in $\cR^*$:
\begin{enumerate}[$(i)$]
\item for all $\alpha,\beta,\gamma\in R^*$, $|\alpha\m\gamma|\leq|\alpha\m\beta|\p|\beta\m\gamma|$;
\item for all $\alpha,\beta\in R^*$, $\frac{1}{3}(\alpha\p\beta)\leq\frac{1}{3}\alpha\p\frac{1}{3}\beta$.
\end{enumerate} 
For smoother exposition, Lemmas 3.22 and 3.23 of \cite{CoTe} also use the ``dotminus" operation $r\dotminus s=\max\{r-s,0\}$ on $[0,1]$. The analog in the present setting is the operation $\alpha\odotminus\beta$, which is $0$ when $\alpha<\beta$ and $|\alpha\m\beta|$ when $\beta\leq\alpha$.

We can now prove Theorem \ref{forkReduct}, completing the proof of Theorem \ref{thm:forks}.

\begin{proof}[Proof of Theorem \ref{forkReduct}]
Fix variables $\xbar=(x_a)_{a\in A}$ and define the type
$$
p(\xbar):=\tp_{\xbar}(A/BC)\cup\{d(x_a,b_*)=U(a):a\in A\}.
$$
If $A'$ realizes $p(\xbar)$ then $A'\equiv_{BC} A$ and, by Proposition \ref{prop:tech}$(a)(ii)$, we have $a'\ind^d_C Bb_*$ for all $a'\in A'$. Using Theorem \ref{thm:divide}, we then have $A'\ind^d_C Bb_*$. Therefore, it suffices to show $p(\xbar)$ is consistent, which means verifying the necessary triangle inequalities. The nontrivial triangles to check either have distances $\{d(a,b),d(b,b_*),U(a)\}$ for some $a\in A$ and $b\in BC$, or $\{d(a_1,a_2),U(a_1),U(a_2)\}$ for some $a_1,a_2\in A$. Therefore, the triangle inequality follows, respectively, from parts $(a)(i)$ and $(b)$ of Proposition \ref{prop:tech}. 
\end{proof}

\section*{Acknowledgments} 
This work was done under the supervision of my thesis advisor, David Marker. Many of the results grew from previous joint work with Caroline Terry. I also thank Alex Kruckman for allowing me to use Proposition \ref{trivEQ} before its publication, as well as the referee for several comments and corrections.

\section*{References}

\bibliography{/Users/gabrielconant/Desktop/Math/BibTex/biblio}
\bibliographystyle{amsplain}

\end{document}